\definecolor{deepjunglegreen}{rgb}{0.0, 0.29, 0.29}
\definecolor{darkspringgreen}{rgb}{0.09, 0.45, 0.27}
\pretocmd\section{\Needspace*{4\baselineskip}}{}{}
\newtheorem{thm}{Theorem}[subsection]
\newtheorem{cor}[thm]{Corollary}
\newtheorem{lem}[thm]{Lemma}
\newtheorem{prop}[thm]{Proposition}
\theoremstyle{definition}
\newtheorem{defn}[thm]{Definition}
\theoremstyle{remark}
\newtheorem{rem}[thm]{Remark}
\newcommand{\nc}{\newcommand}
\nc{\renc}{\renewcommand} \nc{\ssec}{\subsection}
\nc{\sssec}{\subsubsection}
\nc{\on}{\operatorname} \nc{\wh}{\widehat}
\nc\ol{\overline} \nc\ul{\underline} \nc\wt{\widetilde}
\newcommand{\red}[1]{{\color{red}#1}}
\nc{\BA}{{\mathbb{A}}} \nc{\BC}{{\mathbb{C}}} \nc{\BF}{{\mathbb{F}}}
\nc{\BD}{{\mathbb{D}}} \nc{\BG}{{\mathbb{G}}} \nc{\BQ}{{\mathbb{Q}}}
\nc{\BM}{{\mathbb{M}}} \nc{\BN}{{\mathbb{N}}} \nc{\BO}{{\mathbb{O}}}
\nc{\BP}{{\mathbb{P}}} \nc{\BR}{{\mathbb{R}}}
\nc{\BZ}{{\mathbb{Z}}} \nc{\BS}{{\mathbb{S}}} \nc{\BW}{{\mathbb{W}}}
\nc{\CA}{{\mathcal{A}}} \nc{\CB}{{\mathcal{B}}} \nc{\CalC}{{\mathcal{C}}} \nc{\CalD}{{\mathcal{D}}}
\nc{\CE}{{\mathcal{E}}} \nc{\CF}{{\mathcal{F}}}
\nc{\CG}{{\mathcal{G}}} \nc{\CH}{{\mathcal{H}}}
\nc{\CI}{{\mathcal{I}}} \nc{\CJ}{{\mathcal{J}}} \nc{\CK}{{\mathcal{K}}} \nc{\CL}{{\mathcal{L}}}
\nc{\CM}{{\mathcal{M}}} \nc{\CN}{{\mathcal{N}}}
\nc{\CO}{{\mathcal{O}}} \nc{\CP}{{\mathcal{P}}}
\nc{\CQ}{{\mathcal{Q}}} \nc{\CR}{{\mathcal{R}}}
\nc{\CS}{{\mathcal{S}}} \nc{\CT}{{\mathcal{T}}}
\nc{\CU}{{\mathcal{U}}} \nc{\CV}{{\mathcal{V}}}  \nc{\CY}{{\mathcal Y}}
\nc{\CW}{{\mathcal{W}}} \nc{\CZ}{{\mathcal{Z}}}
\nc{\scrM}{{\mathscr M}} \nc{\scrP}{{\mathscr P}}
\nc{\cM}{{\check{\mathcal M}}{}} \nc{\csM}{{\check{\mathcal A}}{}}
\nc{\oM}{{\overset{\circ}{\mathcal M}}{}}
\nc{\obM}{{\overset{\circ}{\mathbf M}}{}}
\nc{\oCA}{{\overset{\circ}{\mathcal A}}{}}
\nc{\obA}{{\overset{\circ}{\mathbf A}}{}}
\nc{\ooM}{{\overset{\circ}{M}}{}}
\nc{\osM}{{\overset{\circ}{\mathsf M}}{}}
\nc{\vM}{{\overset{\bullet}{\mathcal M}}{}}
\nc{\nM}{{\underset{\bullet}{\mathcal M}}{}}
\nc{\oD}{{\overset{\circ}{\mathcal D}}{}}
\nc{\obD}{{\overset{\circ}{\mathbf D}}{}}
\nc{\oA}{{\overset{\circ}{\mathbb A}}{}}
\nc{\op}{{\overset{\bullet}{\mathbf p}}{}}
\nc{\cp}{{\overset{\circ}{\mathbf p}}{}}
\nc{\oU}{{\overset{\bullet}{\mathcal U}}{}}
\nc{\ofZ}{{\overset{\circ}{\mathfrak Z}}{}}
\nc{\ff}{{\mathfrak{f}}} \nc{\fv}{{\mathfrak{v}}}
\nc{\fa}{{\mathfrak{a}}} \nc{\fb}{{\mathfrak{b}}}
\nc{\fd}{{\mathfrak{d}}} \nc{\fe}{{\mathfrak{e}}}
\nc{\fg}{{\mathfrak{g}}} \nc{\fgl}{{\mathfrak{gl}}}
\nc{\fh}{{\mathfrak{h}}} \nc{\fri}{{\mathfrak{i}}}
\nc{\fj}{{\mathfrak{j}}} \nc{\fk}{{\mathfrak{k}}} \nc{\fl}{{\mathfrak{l}}}
\nc{\fm}{{\mathfrak{m}}} \nc{\fn}{{\mathfrak{n}}}
\nc{\ft}{{\mathfrak{t}}} \nc{\fu}{{\mathfrak{u}}}
\nc{\fw}{{\mathfrak{w}}} \nc{\fz}{{\mathfrak{z}}}
\nc{\fp}{{\mathfrak{p}}} \nc{\fq}{{\mathfrak{q}}} \nc{\frr}{{\mathfrak{r}}}
\nc{\fs}{{\mathfrak{s}}} \nc{\fsl}{{\mathfrak{sl}}}
\nc{\fso}{{\mathfrak{so}}} \nc{\fsp}{{\mathfrak{sp}}} \nc{\osp}{{\mathfrak{osp}}}
\nc{\hsl}{{\widehat{\mathfrak{sl}}}}
\nc{\hgl}{{\widehat{\mathfrak{gl}}}}
\nc{\hg}{{\widehat{\mathfrak{g}}}}
\nc{\chg}{{\widehat{\mathfrak{g}}}{}^\vee}
\nc{\hn}{{\widehat{\mathfrak{n}}}}
\nc{\chn}{{\widehat{\mathfrak{n}}}{}^\vee}
\nc{\fA}{{\mathfrak{A}}} \nc{\fB}{{\mathfrak{B}}} \nc{\fC}{{\mathfrak{C}}}
\nc{\fD}{{\mathfrak{D}}} \nc{\fE}{{\mathfrak{E}}}
\nc{\fF}{{\mathfrak{F}}} \nc{\fG}{{\mathfrak{G}}} \nc{\fH}{{\mathfrak{H}}}
\nc{\fI}{{\mathfrak{I}}} \nc{\fJ}{{\mathfrak{J}}}
\nc{\fK}{{\mathfrak{K}}} \nc{\fL}{{\mathfrak{L}}}
\nc{\fM}{{\mathfrak{M}}} \nc{\fN}{{\mathfrak{N}}}
\nc{\frP}{{\mathfrak{P}}} \nc{\fQ}{{\mathfrak{Q}}}
\nc{\fS}{{\mathfrak{S}}} \nc{\fT}{{\mathfrak{T}}} \nc{\fU}{{\mathfrak{U}}}
\nc{\fV}{{\mathfrak{V}}} \nc{\fW}{{\mathfrak{W}}}
\nc{\fX}{{\mathfrak{X}}} \nc{\fY}{{\mathfrak{Y}}}
\nc{\fZ}{{\mathfrak{Z}}}
\nc{\ba}{{\mathbf{a}}}
\nc{\bb}{{\mathbf{b}}} \nc{\bc}{{\mathbf{c}}} \nc{\be}{{\mathbf{e}}}
\nc{\bg}{{\mathbf{g}}} \nc{\bj}{{\mathbf{j}}} \nc{\bm}{{\mathbf{m}}}
\nc{\bn}{{\mathbf{n}}} \nc{\bp}{{\mathbf{p}}}
\nc{\bq}{{\mathbf{q}}} \nc{\br}{{\mathbf{r}}} \nc{\bs}{{\mathbf{s}}}
\nc{\bt}{{\mathbf{t}}} \nc{\bfu}{{\mathbf{u}}} \nc{\bv}{{\mathbf{v}}}
\nc{\bx}{{\mathbf{x}}} \nc{\by}{{\mathbf{y}}} \nc{\bz}{{\mathbf{z}}}
\nc{\bw}{{\mathbf{w}}} \nc{\bA}{{\mathbf{A}}}
\nc{\bB}{{\mathbf{B}}} \nc{\bC}{{\mathbf{C}}}
\nc{\bD}{{\mathbf{D}}} \nc{\bF}{{\mathbf{F}}} \nc{\bG}{{\mathbf{G}}}
\nc{\bH}{{\mathbf{H}}} \nc{\bI}{{\mathbf{I}}} \nc{\bJ}{{\mathbf{J}}}
\nc{\bK}{{\mathbf{K}}} \nc{\bM}{{\mathbf{M}}} \nc{\bN}{{\mathbf{N}}}
\nc{\bO}{{\mathbf{O}}} \nc{\bS}{{\mathbf{S}}} \nc{\bT}{{\mathbf{T}}}
\nc{\bU}{{\mathbf{U}}} \nc{\bV}{{\mathbf{V}}} \nc{\bW}{{\mathbf{W}}}
\nc{\bX}{{\mathbf{X}}}
\nc{\bY}{{\mathbf{Y}}} \nc{\bP}{{\mathbf{P}}}
\nc{\bZ}{{\mathbf{Z}}} \nc{\bh}{{\mathbf{h}}}
\nc{\sA}{{\mathsf{A}}} \nc{\sB}{{\mathsf{B}}}
\nc{\sC}{{\mathsf{C}}} \nc{\sD}{{\mathsf{D}}}
\nc{\sE}{{\mathsf{E}}} \nc{\sF}{{\mathsf{F}}} \nc{\sG}{{\mathsf{G}}}
\nc{\sI}{{\mathsf{I}}} \nc{\sK}{{\mathsf{K}}} \nc{\sL}{{\mathsf{L}}}
\nc{\sfm}{{\mathsf{m}}} \nc{\sM}{{\mathsf{M}}} \nc{\sN}{{\mathsf{N}}}
\nc{\sO}{{\mathsf{O}}} \nc{\sQ}{{\mathsf{Q}}} \nc{\sP}{{\mathsf{P}}}
\nc{\sT}{{\mathsf{T}}} \nc{\sZ}{{\mathsf{Z}}}
\nc{\sV}{{\mathsf{V}}} \nc{\sW}{{\mathsf{W}}}
\nc{\sfp}{{\mathsf{p}}} \nc{\sq}{{\mathsf{q}}} \nc{\sr}{{\mathsf{r}}}
\nc{\sfs}{{\mathsf{s}}} \nc{\st}{{\mathsf{t}}} \nc{\sfb}{{\mathsf{b}}}
\nc{\sfc}{{\mathsf{c}}} \nc{\sd}{{\mathsf{d}}}
\nc{\sz}{{\mathsf{z}}}
\nc{\tA}{{\widetilde{\mathbf{A}}}}
\nc{\tB}{{\widetilde{\mathcal{B}}}}
\nc{\tg}{{\widetilde{\mathfrak{g}}}} \nc{\tG}{{\widetilde{G}}}
\nc{\TM}{{\widetilde{\mathbb{M}}}{}}
\nc{\tO}{{\widetilde{\mathsf{O}}}{}}
\nc{\tU}{{\widetilde{\mathfrak{U}}}{}} \nc{\TZ}{{\tilde{Z}}}
\nc{\tx}{{\tilde{x}}} \nc{\tbv}{{\tilde{\bv}}}
\nc{\tfP}{{\widetilde{\mathfrak{P}}}{}} \nc{\tz}{{\tilde{\zeta}}}
\nc{\tmu}{{\tilde{\mu}}}
\nc{\urho}{\underline{\rho}} \nc{\uB}{\underline{B}}
\nc{\uC}{{\underline{\mathbb{C}}}} \nc{\ui}{\underline{i}}
\nc{\uj}{\underline{j}} \nc{\ofP}{{\overline{\mathfrak{P}}}}
\nc{\oB}{{\overline{\mathcal{B}}}}
\nc{\og}{{\overline{\mathfrak{g}}}} \nc{\oI}{{\overline{I}}}
\nc{\eps}{\varepsilon} \nc{\hrho}{{\hat{\rho}}}
\nc{\blambda}{{\boldsymbol{\lambda}}} \nc{\bmu}{{\boldsymbol{\mu}}} \nc{\bnu}{{\boldsymbol{\nu}}}
\nc{\btheta}{{\boldsymbol{\theta}}} \nc{\bzeta}{{\boldsymbol{\zeta}}} \nc{\bta}{{\boldsymbol{\eta}}}
\nc{\bomega}{{\boldsymbol{\omega}}} \nc{\bxi}{{\boldsymbol{\xi}}} \nc{\brho}{{\boldsymbol{\rho}}}
\nc{\one}{{\mathbf{1}}} \nc{\two}{{\mathbf{t}}}
\nc{\Sym}{\mathop{\operatorname{\rm Sym}}}
\nc{\Tot}{{\mathop{\operatorname{\rm Tot}}}}
\nc{\Spec}{\mathop{\operatorname{\rm Spec}}}
\nc{\Ker}{{\mathop{\operatorname{\rm Ker}}}}
\nc{\Isom}{{\mathop{\operatorname{\rm Isom}}}}
\nc{\Hilb}{{\mathop{\operatorname{\rm Hilb}}}}
\nc{\deeq}{{\mathop{\operatorname{\rm deeq}}}}
\nc{\End}{{\mathop{\operatorname{\rm End}}}}
\nc{\Ext}{{\mathop{\operatorname{\rm Ext}}}}
\nc{\Hom}{{\mathop{\operatorname{\rm Hom}}}}
\nc{\CHom}{{\mathop{\operatorname{{\mathcal{H}}\it om}}}}
\nc{\GL}{{\mathop{\operatorname{\rm GL}}}}
\nc{\SL}{{\mathop{\operatorname{\rm SL}}}}
\nc{\SO}{{\mathop{\operatorname{\rm SO}}}}
\nc{\Sp}{{\mathop{\operatorname{\rm Sp}}}}
\nc{\OSp}{{\mathop{\operatorname{\rm SOSp}}}}
\nc{\gr}{{\mathop{\operatorname{\rm gr}}}}
\nc{\Id}{{\mathop{\operatorname{\rm Id}}}}
\nc{\perf}{{\mathop{\operatorname{\rm perf}}}}
\nc{\defi}{{\mathop{\operatorname{\rm def}}}}
\nc{\length}{{\mathop{\operatorname{\rm length}}}}
\nc{\supp}{{\mathop{\operatorname{\rm supp}}}}
\nc{\HC}{{\mathcal H}{\mathcal C}}
\nc{\pr}{{\operatorname{pr}}}
\nc{\Cliff}{{\mathsf{Cliff}}}
\nc{\loc}{{\operatorname{loc}}}
\nc{\Fl}{{\mathbf{Fl}}} \nc{\Ffl}{{\mathcal{F}\ell}}
\nc{\Fib}{{\mathsf{Fib}}}
\nc{\Coh}{{\mathsf{Coh}}} \nc{\FCoh}{{\mathsf{FCoh}}}
\nc{\Perf}{{\mathsf{Perf}}}
\nc{\wtimes}{\mathbin{\widetilde\times}}
\nc{\reg}{{\text{\rm reg}}}
\nc{\self}{{\text{\rm self}}}
\nc{\gvee}{{\mathfrak g}^{\!\scriptscriptstyle\vee}}
\nc{\tvee}{{\mathfrak t}^{\!\scriptscriptstyle\vee}}
\nc{\nvee}{{\mathfrak n}^{\!\scriptscriptstyle\vee}}
\nc{\bvee}{{\mathfrak b}^{\!\scriptscriptstyle\vee}}
       \nc{\rhovee}{\rho^{\!\scriptscriptstyle\vee}}
\nc{\cplus}{{\mathbf{C}_+}} \nc{\cminus}{{\mathbf{C}_-}}
\nc{\cthree}{{\mathbf{C}_*}} \nc{\Qbar}{{\bar{Q}}}
\newcommand{\oC}{\vphantom{j^{X^2}}\smash{\overset{\circ}{\vphantom{\rule{0pt}{0.55em}}\smash{C}}}}
\newcommand{\oZ}{\vphantom{j^{X^2}}\smash{\overset{\circ}{\vphantom{\rule{0pt}{0.55em}}\smash{Z}}}}
\newcommand{\oCI}{\vphantom{j^{X^2}}\smash{\overset{\circ}{\vphantom{\rule{0pt}{0.55em}}\smash{\mathcal I}}}}
\newcommand{\bCD}{\vphantom{j^{X^2}}\smash{\overset{\bullet}{\vphantom{\rule{0pt}{0.55em}}\smash{\mathcal D}}}}
\newcommand{\bCP}{\vphantom{j^{X^2}}\smash{\overset{\bullet}{\vphantom{\rule{0pt}{0.55em}}\smash{\mathcal P}}}}
\newcommand\iso{\mathbin{\vphantom{j^{X^2}}\smash{\overset{\sim}{\vphantom{\rule{0pt}{0.20em}}\smash{\longrightarrow}}}}}
\nc{\Gtimes}{\vphantom{j^{X^2}}\smash{\overset{G}{\vphantom{\rule{0pt}{0.30em}}\smash{\times}}}}
\nc{\sGtimes}{\vphantom{j^{X^2}}\smash{\overset{\mathsf G}{\vphantom{\rule{0pt}{0.30em}}\smash{\times}}}}
\nc{\bOmega}{{\overline{\Omega}}}
\nc{\seq}[1]{\stackrel{#1}{\sim}}
\nc{\aff}{{\operatorname{aff}}}
\nc{\fin}{{\operatorname{fin}}}
\nc{\mir}{{\operatorname{mir}}}
\nc{\triv}{{\operatorname{triv}}}
\nc{\ext}{{\operatorname{ext}}}
\nc{\righ}{{\operatorname{right}}}
\nc{\lef}{{\operatorname{left}}}
\nc{\forg}{{\operatorname{forg}}}
\nc{\fid}{{\operatorname{fd}}}
\nc{\odd}{{\operatorname{odd}}}
\nc{\even}{{\operatorname{even}}}
\nc{\modu}{{\operatorname{-mod}}}
\nc{\Gr}{{\mathbf{Gr}}}
\nc{\FT}{{\operatorname{FT}}}
\nc{\Mat}{{\operatorname{Mat}}}
\nc{\MSt}{{\operatorname{MSt}}}
\nc{\sph}{{\operatorname{sph}}}
\nc{\GR}{{\mathbf{GR}}}
\nc{\Perv}{{\operatorname{Perv}}}
\nc{\Rep}{{\operatorname{Rep}}}
\nc{\Ind}{{\operatorname{Ind}}}
\nc{\IC}{{\operatorname{IC}}}
\nc{\Bun}{{\operatorname{Bun}}}
\nc{\Proj}{{\operatorname{Proj}}}
\nc{\Stab}{{\operatorname{Stab}}}
\nc{\pt}{{\operatorname{pt}}}
\nc{\bfmu}{{\boldsymbol{\mu}}}
\nc{\bfomega}{{\boldsymbol{\omega}}}
\nc{\calM}{\mathcal M}
\nc{\calA}{\mathcal A}
\nc{\calO}{\mathcal O}
\nc{\CC}{\mathbb C}
\nc{\calN}{\mathcal N}
\nc{\grg}{\mathfrak g}
\nc{\dslash}{/\!\!/}
\nc{\tslash}{/\!\!/\!\!/}
\nc\grt{\mathfrak t}
\nc\bfM{\mathbf M}
\nc\bfN{\mathbf N}
\nc\Sig{\Sigma}
\nc\ZZ{\mathbb{Z}}
\nc\calC{\mathcal C}
\nc\calF{\mathcal F}
\nc\calX{\mathcal X}
\nc\calY{\mathcal Y}
\nc\QCoh{\operatorname{QCoh}}
\nc\IndCoh{\operatorname{IndCoh}}
\nc\Maps{\operatorname{Maps}}
\nc\Dmod{D-\operatorname{mod}}
\newcommand\Hecke{\operatorname{Hecke}}
\nc{\calD}{\mathcal D}
\nc\bfO{\mathbf O}
\nc\GG{\mathbb G}
\nc\calK{\mathcal K}
\nc{\calG}{\mathcal G}
\nc\RHom{\operatorname{RHom}}
\nc\Res{\operatorname{Res}}
\nc\Av{\operatorname{Av}}
\nc\grs{\mathfrak s}
\nc{\tilX}{\widetilde X}
\nc\calB{\mathcal B}
\nc\calS{\mathcal S}
\nc\calT{\mathcal T}
\nc\calZ{\mathcal Z}
\nc\LS{\operatorname{LocSys}}
\nc\bfL{\on{\mathbf L}}
\newcommand*\circled[1]
\newcommand{\raisemath}[1]{\mathpalette{\raisem@th{#1}}}
\newcommand{\raisem@th}[3]{\raisebox{#1}{$#2#3$}}
\nc{\binlim}[2][]{\def\@tempa{#1}\@ifnextchar^{\@binlim{#2}}{\@binlim{#2}^{}}}
\def\@binlim#1^#2{\mathbin{\@ifempty{#2}{\mathop{#1}}{\mathop{#1}\@xp\displaylimits\@tempa^{#2}}}}
\nc\cX{{\mathcal X}}
\newcommand{\dbkts}[1]{[\![#1]\!]}
\newcommand{\dprts}[1]{(\!(#1)\!)}
\nc\Gm{{\mathbb G_m}}
\renc\Hecke{\mathit{\CH\kern-.2ex ecke}}
\nc\Fq{\mathbb F_q}
\nc\bGO{{\bG_\bO}}
\nc\opp{{\on{op}}}
\nc\tbx{\binlim{\widetilde\boxtimes{}}}
\nc\phitau{\varphi\tau}
\newenvironment{i-ii-iii}{%
\begin{enumerate}
}%
{\end{enumerate}}
\nc\ceil[1]{\lceil#1\rceil}  \nc\floor[1]{\lfloor#1\rfloor}
\nc\Lie{\on{Lie}}
\nc\sS{{\mathsf S}}
\nc\vvv{\ensuremath{\red\surd}}
 \let\arXiv\arxiv
\nc\kap{\kappa}
\nc\gra{\mathfrak a}
\nc\gl{\mathfrak{gl}}
\nc\sTr{\operatorname{sTr}}
\nc\hatG{\widehat{G}}
\nc\calL{\mathcal L}
\nc\Whit{\operatorname{Whit}}
\nc\KL{\operatorname{KL}}
\renewcommand{\subsection}{\@startsection{subsection}{2}{0pt}{-3ex
plus -1ex minus -0.2ex}{-2mm plus -0pt minus
-2pt}{\normalfont\bfseries}} \makeatother
\numberwithin{equation}{subsection}
\nc\mto{\mapsto }
\nc\en{\enspace }
\begin{document}

\author[A.Braverman]{Alexander Braverman}
\address{Department of Mathematics, University of Toronto and Perimeter Institute
of Theoretical Physics, Waterloo, Ontario, Canada, N2L 2Y5}
\email{braval@math.toronto.edu}

\author[M.Finkelberg]{Michael Finkelberg}
\address{Einstein Institute of Mathematics, The Hebrew University of Jerusalem,
  Edmond J. Safra Campus, Giv’at Ram, Jerusalem, 91904, Israel;
\newline  National Research University Higher School of Economics;
\newline Skolkovo Institute of Science and Technology}
\email{fnklberg@gmail.com}

\author[R.Travkin]{Roman Travkin}
\address{Skolkovo Institute of Science and Technology, Moscow, Russia}
\email{roman.travkin2012@gmail.com}

\title
    {Gaiotto conjecture for $\Rep_q(\GL(N-1|N))$}
\dedicatory{To David Kazhdan and George Lusztig on their 75th birthdays with admiration}
    \maketitle

    \begin{abstract}
      We prove D.~Gaiotto's conjecture about geometric Satake equivalence for quantum supergroup
      $U_q(\fgl(N-1|N))$ for generic $q$. The equivalence goes through the category of factorizable
      sheaves.
    \end{abstract}

    \section{Introduction}

    \subsection{Geometric Satake equivalence and FLE}
    Let $\bF=\BC\dprts{t}\supset\BC\dbkts{t}=\nobreak\bO$. Let $G$ be a connected reductive group over $\BC$.
    Let $\Gr_G=G(\bF)/G(\bO)$
    be the affine Grassmannian of $G$. One can consider the category $\Perv_{G(\bO)}(\Gr_G)$ of
    $G(\bO)$-equivariant perverse sheaves on
    $\Gr_G$. This is a tensor category over $\BC$. The geometric Satake equivalence identifies this category with the category
    $\Rep(G^{\vee})$ of finite-dimensional representations of the Langlands dual group $G^{\vee}$.

    The above equivalence is very important for many applications (e.g.\ it is in some sense the
    starting point for the geometric Langlands correspondence) but at the same time it has two
    serious drawbacks:

    1) It does not hold on the derived level. In fact, the derived Satake equivalence~\cite{bef} does provide a description of the derived category $D_{G(\bO)}(\Gr_G)$ in terms of $G^{\vee}$ but the answer is certainly not the derived category of $\Rep(G^{\vee})$.

    2) For many reasons it would be nice to generalize the above equivalence so that the category $\Rep(G^{\vee})$ gets replaced with the category $\Rep_q(G^{\vee})$ --- the category of finite-dimensional representations of the corresponding quantum group. But it seems that it is impossible to find such a generalization.

On the other hand, J.~Lurie and D.~Gaitsgory found a replacement of the geometric Satake equivalence
(called the Fundamental Local Equivalence, or FLE) where both of the above problems disappear.

Namely, let $U$ be a maximal unipotent subgroup of $G$, and let $\overline{\chi}\colon U\to \GG_a$
be its generic character. Let $\chi\colon U(\bF)\to \GG_a$ be given by the formula
$\chi(u(t))=\Res_{t=0} \overline{\chi}(u(t))$.
Let now $\Whit(\Gr_G)$ be the derived category of $(U(\bF),\chi)$-equivariant sheaves on $\Gr_G$.
Then this category is equivalent to $D(\Rep(G^{\vee}))$. Moreover, this statement can be generalized
to an equivalence between the category $D(\Rep_q(G^{\vee}))$ and the corresponding
category $\Whit_q(\Gr_G)$ (sheaves twisted by the corresponding complex power of a certain
determinant line bundle on $\Gr_G$ --- we refer the reader to~\cite{gl} for the discussion of
precise meaning of $q$ etc.). This equivalence preserves the natural $t$-structures on both sides.

\subsection{Gaiotto conjectures}
Let now $G=\GL(N)$. In this case D.~Gaiotto constructed certain series of subgroups of $G$ endowed
with an additive character, which in many respects resemble the pair $(U,\overline{\chi})$ above.
Namely, fix $M<N$. Consider the natural embedding of $\GL(M)$ into $\GL(N)$. Then one can construct
(see~\cite[\S2]{bfgt}) a unipotent subgroup $U_{M,N}$ of $\GL(N)$ that is normalized by $\GL(M)$,
and a character $\overline{\chi}_{M,N}\colon U_{M,N}\to \GG_a$ which fixed by the adjoint action of
$\GL(M)$ such that conjecturally for generic $q$ we have an equivalence (the notation is explained below):
\begin{equation}\label{gaiotto}
SD_{\GL(M,\bO)\ltimes (U_{M,N}(\bF),\chi_{M,N}),q}(\bCD)\simeq D(\Rep_q(\GL(M|N)),
\end{equation}
that respects the natural $t$-structures on both sides, i.e.\ it should induce an equivalence
\begin{equation}\label{gaiotto-perv}
S\Perv_{\GL(M,\bO)\ltimes (U_{M,N}(\bF),\chi_{M,N}),q}(\bCD)\simeq \Rep_q(\GL(M|N)).
\end{equation}

Here the notations are as follows:
a) $\mathcal D$ stands for certain determinant line bundle on $\Gr_{\GL(N)}$ and $\bCD$ is the total space of this line bundle with zero section
removed.

b) $SD_{\GL(M,\bO)\ltimes (U_{M,N}(\bF),\chi_{M,N}),q}(\bCD)$ stands for the derived category of
$\GL(M,\bO)\ltimes (U_{M,N}(\bF),\chi_{M,N})$-equivariant $q$-monodromic sheaves $\bCD$ with coefficients in super-vector spaces; $S\Perv_{\GL(M,\bO)\ltimes (U_{M,N}(\bF),\chi_{M,N}),q}(\bCD)$ stands for the corresponding category of perverse sheaves.

c) $\GL(M|N)$ is the super group of automorphisms of the super vector space $\BC^{M|N}$ and
$\Rep_q(GL(M|N))$ is the category of finite-dimensional representations of the corresponding
quantum group (cf.~\S\ref{quantum} for the precise definitions).

Let us note that the above formulation is for generic $q$; a similar formulation should hold for
all $q$, but one has to be more careful about the precise form of the corresponding quantum
super group over $\BC[q,q^{-1}]$.

\subsection{What is done in this paper?}
In this paper we deal with the case $M=N-1$ for generic $q$. The advantage of the $M=N-1$ assumption is that in this case the group $U_{M,N}$ is
trivial, so $\GL(M,\bO)\ltimes U_{M,N}(\bF)$ is just equal to $\GL(N-1,\bO)$ (and the character $\chi$ is trivial as well).
The current  paper should be thought of as a sequel to~\cite{bfgt}. There we consider (among other things) the case $q=1$. As was noted above, one has to be careful about specializing to non-generic $q$. It turns out that for $q=1$ the correct statement is as follows.

Consider a degenerate version $\ul\fgl(N-1|N)$ where the
supercommutator of the even elements (with even or odd elements) is the same as in $\fgl(N-1|N)$,
while the supercommutator of any two odd elements is set to be zero.
In other words, the even part $\ul\fgl(N-1|N){}_{\bar0}=\fgl_{N-1}\oplus\fgl_N$ acts naturally on the
odd part $\ul\fgl(N-1|N){}_{\bar1}=\Hom(\BC^{N-1},\BC^N)\oplus\Hom(\BC^N,\BC^{N-1})$, but the
supercommutator $\ul\fgl(N-1|N){}_{\bar1}\times\ul\fgl(N-1|N){}_{\bar1}\to\ul\fgl(N-1|N){}_{\bar0}$
equals zero.

The category of finite dimensional representations of the corresponding supergroup
$\ul\GL(N-1|N)$ is denoted $\Rep(\ul\GL(N-1|N))$. In~\cite{bfgt}
we construct a tensor equivalence from the abelian category $S\Perv_{\GL(N-1,\bO)}(\Gr_{\GL_N})$
of equivariant perverse sheaves with coefficients in super vector spaces
to $\Rep(\ul\GL(N-1|N))$. Here
the monoidal structure on $S\Perv_{\GL(N-1,\bO)}(\Gr_{\GL_N})$ is defined via the fusion product
(nearby cycles in the Beilinson-Drinfeld Grassmannian).
This equivalence is reminiscent of the classical
geometric Satake equivalence $\Perv_{\GL(N,\bO)}(\Gr_{\GL_N})\cong\Rep(\GL_N)$, but as was noted above it should rather be thought of as analog of FLE. In particular, in~\cite{bfgt} we also prove the corresponding derived equivalence
\[SD_{\GL(N-1,\bO)}(\Gr_{\GL_N})\simeq D(\Rep(\ul\GL(N-1|N)))\]
(in fact, we first prove the derived equivalence and then show that it is compatible with the $t$-structures on both sides).

The main purpose of this paper is to prove~\eqref{gaiotto-perv} for $M=N-1$~(Theorem~\ref{main}).
In other words, assuming that $q$ is transcendental\footnote{Probably the assumption that $q$ is not
a root of unity should suffice but for our current proof we need to assume that $q$ is transcendental
for certain technical reasons.}
we prove a braided monoidal equivalence between abelian categories $S\Perv_{\GL(N-1,\bO),q}(\bCD)$ and
$\Rep_q(\GL(N-1|N))$.
The braided tensor structure on the geometric side is again defined via the fusion product.
Contrary to the case $q=1$, we use the abelian equivalence~\eqref{gaiotto-perv} to derive the
derived equivalence~\eqref{gaiotto}. It follows from an equivalence
$D(S\Perv_{\GL(N-1,\bO),q}(\bCD))\simeq SD_{\GL(N-1,\bO),q}(\bCD)$~(Theorem~\ref{derived}).

\begin{rem} Let us note that the $q=1$ case discussed above is a special case of a very general set of conjectures due to D.~Ben-Zvi, Y.~Sakellaridis and A.~Venkatesh; those conjectures were in fact motivated by known results about automorphic $L$-functions. However, to the best of our knowledge, it is not known how to extend those general conjectures to the ``quantum'' (i.e.\ general $q$) case. Thus in some sense at the moment the only motivation for the equivalences (\ref{gaiotto}) and (\ref{gaiotto-perv}) comes from mathematical physics.
  \end{rem}

    \subsection{Outline of the proof of the main theorem}
Our argument follows the scheme of D.~Gaitsgory's proof~\cite{g} of the FLE for generic $q$.
We use the Lurie-Gaitsgory generalization of~\cite{bfs}: a braided tensor equivalence between
$\Rep_q(\GL(N-1|N))$ and an appropriate category $\on{FS}$ of factorizable sheaves on
configuration spaces of a smooth projective curve $C$. In the main body of the paper we
construct a braided tensor equivalence $F\colon S\Perv_{\GL(N-1,\bO),q}(\bCD)\iso\on{FS}$.

To this end we use a correspondence between the Hecke stack $\GL(N-1,\bO)\backslash\Gr_{\GL_N}$
and a configuration space of $C$. It is nothing but the zastava model $\CW$ with poles
introduced in~\cite{sw}. In fact, Y.~Sakellaridis and J.~Wang worked out their zastava models
for arbitrary affine spherical varieties with all the spherical roots of type $T$, and we
use their theory for one particular spherical variety $H\backslash G$ where
$G=\GL_{N-1}\times\GL_N$, and $H$ is the block-diagonally embedded $\GL_{N-1}$.
Note that the Sakellaridis-Wang (SW for short) zastava models for $H\backslash G$ differ
drastically from their classical counterparts for $\overline{U\backslash G}$ (the base affine
space). The main difference is that while the factorization morphism to configuration space
of $C$ looks like an integrable system in the classical case, for $H\backslash G$ this
factorization morphism is semismall.

Due to the above semismallness, the functor $F$ (defined as the push-pull via the zastava
with poles correspondence) is automatically exact. More precisely, we also use the smoothness
of the morphism from zastava $\CW$ to the Hecke stack $\GL(N-1,\bO)\backslash\Gr_{\GL_N}$ and
the cleanness property of extension from $\CW$ to compactified zastava $\ol\CW$.
A more refined analysis of our twisting on the fibers of the factorization morphism allows
us to strengthen the above exactness result and prove that $F$ takes irreducible sheaves
in the Gaiotto category to irreducible factorizable sheaves. Furthermore, since the braided
tensor structures on both categories $S\Perv_{\GL(N-1,\bO),q}(\bCD)$ and $\on{FS}$ are defined
via the same fusion construction, our functor $F$ is automatically braided tensor.

In the FLE for generic $q$ case of~\cite{g} this was the end of the story since the categories
in question were semisimple. We need a little extra work. Namely, we need to exhibit enough
projectives in the Gaiotto category that go to projective factorizable sheaves. The corresponding
projectives in $\Rep_q(\GL(N-1|N))$ are tensor products $V_{\bmu,\bnu}\otimes V_{\bzeta,\brho}$
of irreducibles for a particular {\em typical} highest weight (a bisignature) $(\bzeta,\brho)$.
The typicality assumption guarantees that $V_{\bzeta,\brho}$ is projective, and hence
$V_{\bmu,\bnu}\otimes V_{\bzeta,\brho}$ is projective as well by the rigidity property of the tensor
category $\Rep_q(\GL(N-1|N))$. Our remaining task is to mimick this construction in the
Gaiotto category.

First, the rigidity property of the monodromic sheaf
$\IC^q_{\bzeta,\brho}\in S\Perv_{\GL(N-1,\bO),q}(\bCD)$ can be deduced from the known rigidity for
$q=1$~\cite{bfgt} by a deformation argument. It is here that the assumption of Weil genericity
of $q$ (i.e.\ $q$ is transcendental) would be used. We actually take another route explained
to us by P.~Etingof. We consider a full abelian tensor subcategory
$\CE\subset S\Perv_{\GL(N-1,\bO),q}(\bCD)$
generated by the sheaves $\IC^q_{\on{taut}},(\IC^q_{\on{taut}})^*$ corresponding to the tautological
representation of $U_q(\fgl(N-1|N))$ and its dual. Following~\cite{c}, one can prove that
$\CE$ is braided tensor equivalent to $\Rep_q(\GL(N-1|N))$. But here as well we need to
use the Weil genericity of $q$ assumption. Anyway, $\CE$ is rigid and contains all the
irreducible sheaves $\IC^q_{\bmu,\bnu}$.

Finally, we need to establish the projectivity of $\IC^q_{\bzeta,\brho}$. This is proved by
checking that any other irreducible sheaf $\IC^q_{\bmu,\bnu}$ has zero stalks at the
$\GL(N-1,\bO)$-orbit $\BO_{\bzeta,\brho}\subset\Gr_{\GL_N}$. To this end we use a special curve
$C=\BA^1$, and show that the appropriate zastava with poles spaces $\CW$ play the role of
transversal slices to $\BO_{\bzeta,\brho}$, similarly to~\cite[\S2(v)]{bfn}. The advantage of
our choice $C=\BA^1$ is that by the contraction principle, the stalk in question equals
the cohomology of $\CW$ with coefficients in the pull back of $\IC^q_{\bmu,\bnu}$.
By the definition of our functor $F$, the latter cohomology equals the cohomology of a
configuration space of $C=\BA^1$ with coefficients in the corresponding irreducible factorizable
sheaf. The latter cohomology can be computed as certain Ext in the category $\CO$ of
$U_q(\fgl(N-1|N))$. It vanishes since $(\bmu,\bnu)$ and $(\bzeta,\brho)$ lie in different
linkage classes of this category. This bootstrap argument finishes the proof of our main theorem.

\subsection{}
A few concluding remarks are in order. First, one can also prove the Gaiotto conjecture for $M=N$
and $\GL(N,\bO)$-equivariant
$q$-monodromic perverse sheaves on the determinant line bundle on the mirabolic affine Grassmannian
of $\GL_N$~\cite[\S2.5]{bfgt} as well as for orthosymplectic quantum groups and
$S\Perv_{\SO(N-1,\bO),q}(\bCD)$ ($q$-monodromic perverse sheaves on the punctured determinant line
bundle over $\Gr_{\SO_N}$)~\cite[\S3.2]{bft} along the same lines.

Second, the original statement of Gaiotto conjecture~\cite[\S2]{bfgt} was not in terms of
quantum supergroups, but in terms of representations of the corresponding affine Lie superalgebras.
The version discussed in the present paper is obtained via the (not yet established) super analogue
of the Kazhdan-Lusztig equivalence.

Third, similarly to the Iwahori version of FLE established in~\cite{ya}, we expect a derived
equivalence between the category of $q$-monodromic sheaves on (the determinant line bundle on)
the affine flag variety of $\GL(N)$
equivariant with respect to the Iwahori subgroup of $\GL(N-1,\bO)$, and the category $\CO$ of
the quantized universal enveloping algebra $U_q(\fgl(N-1|N))$. It would yield the Kazhdan-Lusztig
type formulas for the characters of
irreducibles in the category $\CO$ (in particular, the finite dimensional irreducibles).

\subsection{Acknowledgments}
It should be clear from the above that the present note (as well as the whole modern representation
theory) rests upon the fundamental discoveries made by D.~Kazhdan and G.~Lusztig. Our intellectual
debt to them cannot be overestimated.

This note is the result of generous explanations by I.~Entova-Aizenbud, P.~Etingof, D.~Gaiotto,
D.~Gaitsgory, D.~Leites, Y.~Sakellaridis, V.~Serganova and J.~Wang. We are also grateful to
I.~Shchepochkina and A.~Tsymbaliuk for the help with references and to R.~Yang
for the interesting discussions. Finally, we would like to thank the anonymous referee for his
numerous useful suggestions and corrections.

A.B.~was partially supported by NSERC. The research of M.F.~was supported by
the Israel Science Foundation (grant No.~994/24).

    \section{Quantum supergroups and factorizable sheaves}\label{quantum}

    In this section we briefly recall a geometric realization of the category of representations
    of the quantum supergroup $U_q(\fgl(N-1|N))$ in factorizable sheaves following~\cite{bfs,g}.

    \subsection{Quantum supergroup}
    We fix a transcendental complex number $q\in\BC$.
    For a definition of $U_q(\fgl(N-1|N))$ see~\cite{y,chw}. Note that the definition
    depends on a choice of a Borel subalgebra of $\fgl(N-1|N)$, but the resulting quantum
    algebras are all isomorphic according to~\cite[Proposition 7.4.1]{y2}. We will use the so
    called {\em mixed} Borel subalgebra all of whose simple roots are odd isotropic.
More precisely, we fix a basis
    $\delta_1,\ldots,\delta_{N-1},\varepsilon_1,\ldots,\varepsilon_N$ of diagonal entries weights
of the diagonal Cartan subgroup of $\GL(N-1|N)$.
The positive simple roots with respect to the mixed Borel subalgebra of $\GL(N-1|N)$ are
as follows:
\begin{equation}
  \label{mixed}
  \alpha_1=\varepsilon_1-\delta_1,\ \alpha_2=\delta_1-\varepsilon_2,\ldots,\
  \alpha_{2i-1}=\varepsilon_i-\delta_i,\ \alpha_{2i}=\delta_i-\varepsilon_{i+1},\ldots,\
  \alpha_{2N-2}=\delta_{N-1}-\varepsilon_N.
  \end{equation}

    In this case the defining relations (quantum analogues of Serre relations) of $U_q(\fgl(N-1|N))$
    are explicitly written down in~\cite[Lemma 6.1.1(i)]{y} and~\cite[Proposition 2.7(AB)]{chw}.

    According to~\cite[Theorem 48]{ge}, the highest weights of the irreducible finite dimensional
    representations of $U_q(\fgl(N-1|N))$
    (with respect to the mixed Borel subalgebra) are the same as the highest weights of the
    irreducible finite dimensional representations of non-quantized supergroup $\GL(N-1|N)$.
    Let us recall the classification of such highest weights. In the above basis
    $\delta_1,\ldots,\delta_{N-1},\varepsilon_1,\ldots,\varepsilon_N$, the weights are pairs
    $(\bmu,\bnu)\in\BZ^{N-1}\oplus\BZ^N=\BZ^{2N-1}=X$. The dominant highest weights for
    $\GL_{N-1}\x\GL_N$ are
    the pairs of signatures $\big(\blambda=(\lambda_1\geq\ldots\geq\lambda_{N-1}),\
    \btheta=(\theta_1\geq\ldots\geq\theta_N)\big)$ such that the length of $\blambda$
    (resp.\ $\btheta$) is $N-1$ (resp.\ $N$).

    \begin{lem}[V.~Serganova] A pair of signatures $(\blambda,\btheta)$ is the highest weight of
      an irreducible
      finite dimensional representation of $\GL(N-1|N)$ (and of the quantum supergroup
      $U_q(\fgl(N-1|N))$) if and only if the following condition holds:
\begin{equation}
      \label{serg}
      \on{if}\ \theta_i=\theta_{i+1},\ \on{then}\ \theta_i+\lambda_i=0;\ \&\
      \on{if}\ \lambda_{i-1}=\lambda_i,\ \on{then}\ \theta_i+\lambda_i=0.
\end{equation}
    \end{lem}

    \begin{proof}
The criterion in question follows from~\cite[Theorem 10.5]{s1}. Alternatively, it can be
deduced from~\cite[Corollary 8.6.2]{m}. Namely, we consider another Borel subalgebra
$\fb'$ with the same positive even roots but with a unique simple odd root
$\varepsilon_N-\delta_1$. Then {\em any} pair of signatures $(\bmu,\bnu)$ is a highest weight
(with respect to $\fb'$) of a finite-dimensional $\GL(N-1|N)$-module since the Kac module
$\Ind_{\GL(N-1)\times\GL(N)}^{\GL(N-1|N)}(V^\bmu\otimes V^\bnu)$ is finite-dimensional. It remains to
rewrite the highest weight $(\blambda,\btheta)$ with respect to the mixed Borel subalgebra $\fb$
in terms of the highest weight $(\bmu,\bnu)$ with respect to $\fb'$. To this end we use the
following sequence of odd reflections (we learned from A.~Lebedev) taking $\fb'$ to $\fb$,
where we number the vertices of the Dynkin graph from right to left:
\[r_{2N-2}(r_{2N-4}r_{2N-3})\cdots(r_{2k}r_{2k+1}\cdots r_{N+k-2}r_{N+k-1})\cdots
(r_4r_5\cdots r_Nr_{N+1})(r_2r_3\cdots r_{N-1}r_N).\]

    \end{proof}

\begin{defn}
  We denote by $\Rep_q(\GL(N-1|N))$ the abelian braided tensor category of finite dimensional
  representations of $U_q(\fgl(N-1|N))$ equipped with a grading by the weight lattice $X$ that
  defines the action of the Cartan subalgebra of $U_q(\fgl(N-1|N))$.
\end{defn}

\subsection{Configuration spaces}
We fix a smooth projective curve $C$ with a marked point $c\in C$. Given a weight
$(\bmu,\bnu)\in X=\BZ^{N-1}\oplus\BZ^N$, we consider the configuration space $C^{(\bmu,\bnu)}$
of $X$-colored divisors $D=-\sum_{i=1}^{N-1}\delta_i\Delta_i+\sum_{i=1}^N\varepsilon_iE_i$ on $C$
of total degree $(\bmu,\bnu)$ with the following positivity condition.

For $1\leq i\leq N-1$, we set
\[D_{2i}=\Delta_1+\ldots+\Delta_i-E_1-\ldots-E_i\ \on{and}\
D_{2i-1}=\Delta_1+\ldots+\Delta_{i-1}-E_1-\ldots-E_i-B,\]
where\footnote{$B$ stands for Berezinian.}
\[B=\Delta_1+\ldots+\Delta_{N-1}-E_1-\ldots-E_N.\]
These divisors are the coefficients of $D$ in the basis of negative simple roots:
\[D=-\sum_{j=1}^{2N-2}\alpha_iD_i+\left(\sum_{i=1}^{N-1}\delta_i-\sum_{i=1}^N\varepsilon_i\right)B.\]
Then we require
\begin{equation}
  \label{effective}
  B\ \on{is}\ \on{supported}\ \on{at}\ c\in C,\ \on{and}\ D_j\ \on{is}\ \on{effective}\ \on{away}\
  \on{from}\ c\in C\ \on{for} 1\leq j\leq 2N-2.
  \end{equation}


The space $C^{(\bmu,\bnu)}$ has a natural structure of an ind-variety. Namely, restricting the
degrees at $c\in C$ we get
\[C^{(\bmu,\bnu)}=\bigcup_{(\blambda,\btheta)}C^{(\bmu,\bnu)}_{\leq(\blambda,\btheta)},\]
where $D\in C^{(\bmu,\bnu)}_{\leq(\blambda,\btheta)}$ if $D-(\blambda,\btheta)\cdot c$ enjoys
the effectivity property~\eqref{effective} at {\em all} points of $C$.
We have $C^{(\bmu,\bnu)}_{\leq(\blambda,\btheta)}\simeq C^\alpha=\prod_{j=1}^{2N-2}C^{(a_j)}$, where
$(\blambda,\btheta)-(\bmu,\bnu)=\alpha=\sum_{j=1}^{2N-2}a_j\alpha_j$ for $a_j\in\BN$.

\subsection{A factorizable line bundle}
\label{factor bundle}
We consider a line bundle $\CP$ on $C^{(\bmu,\bnu)}$ with fibers
\[\CP_D=\bigotimes_{i=1}^{N-1}\det R\Gamma(C,\CO_C(-\Delta_i))\otimes\bigotimes_{i=1}^N\det{}\!^{-1}
R\Gamma(C,\CO_C(-E_i))\otimes\det R\Gamma(C,\CO_C).\]
In other words, the fiber of $\CP$ at
$D=\sum_{x\in C}\sum_{i=1}^{N-1}\mu_{i,x}\delta_ix+\sum_{x\in C}\sum_{i=1}^N\nu_{i,x}\varepsilon_ix$ is
\begin{equation}
  \label{fibers P}
  \CP_D=\bigotimes_{x\in C}\bigotimes_{i=1}^{N-1}\bomega_x^{-\mu_{i,x}(\mu_{i,x}+1)/2}\otimes
  \bigotimes_{x\in C}\bigotimes_{i=1}^N\bomega_x^{\nu_{i,x}(\nu_{i,x}-1)/2},
\end{equation}
where $\bomega_x$ is the fiber of the canonical line bundle $\bomega_C$ at $x\in C$.

If we choose a decomposition $(\bmu,\bnu)=(\bmu',\bnu')+(\bmu'',\bnu'')$, then we have an
addition of divisors morphism
\[\on{add}\colon C^{(\bmu',\bnu')}\times C^{(\bmu'',\bnu'')}\to C^{(\bmu,\bnu)}.\]
The line bundle $\CP$ enjoys the {\em factorization} property
\begin{equation}
  \label{factor line}
  \on{add}^*\CP|_{\big(C^{(\bmu',\bnu')}_{\leq(\blambda',\btheta')}\times
  C^{(\bmu'',\bnu'')}_{\leq(\blambda'',\btheta'')}\big)_{\on{disj}}}
  \cong\CP\boxtimes\CP|_{\big(C^{(\bmu',\bnu')}_{\leq(\blambda',\btheta')}\times
    C^{(\bmu'',\bnu'')}_{\leq(\blambda'',\btheta'')}\big)_{\on{disj}}}.
\end{equation}

In order to stress the dependence on the base, sometimes we will denote the line bundle
$\CP$ on $C^{(\bmu,\bnu)}$ by $\CP^{(\bmu,\bnu)}$.

\subsection{Monodromic sheaves}
\label{mon shv}
Let $\bCP$ denote the total space of the line bundle $\CP$ with the zero section removed.
We will consider the category $S\Perv_q(\bCP)$ of perverse sheaves of super vector spaces on
$\bCP$, monodromic with monodromy $q$. We describe the most important (for us) object of this
category. We assume that $(\bmu,\bnu)\in-X_{\on{pos}}$, that is
$\bmu+\bnu=-\alpha=-\sum_{j=1}^{2N-2}a_j\alpha_j,\ a_j\in\BN$.
Then $C^{(\bmu,\bnu)}_{\leq(0,0)}\cong C^\alpha\supset\oC^\alpha$: the open subset formed by all the multiplicity
free divisors (i.e.\ each point has multiplicity either 0 or a simple root).
Comparing~\eqref{mixed} and~\eqref{fibers P}, we see that the restriction of the
line bundle $\CP$ to $\oC^\alpha$ trivializes canonically, that is
$\bCP|_{\overset{\circ}{C}{}^\alpha}\cong\BG_m\times\oC^\alpha$. We denote by $\oCI^\alpha$ the local system
on $\bCP|_{\overset{\circ}{C}{}^\alpha}$ equal to the pullback from the $\BG_m$ factor of the
one-dimensional 
local system with monodromy $q$. Finally, we define $\CI^\alpha\in S\Perv_q(\bCP)$
as the Goresky-MacPherson extension of $\oCI^\alpha$ to the whole of $\bCP|_{C^\alpha}$.

Note a crucial difference with a similar construction in~\cite[\S3.4]{g}, where the one-dimensional
local system has the {\em sign} monodromy around $\oC^\alpha$. This is due to the fact that in our
setting the simple roots are all {\em odd}, while in the classical setting the simple roots are all
{\em even}. Also, the absence of signs in our setting will turn out to be compatible with the
structure of SW zastava whose projection to the configuration space is an {\em isomorphism} over
$\oC^\alpha$.

By construction, we have the following factorization isomorphism for $\alpha,\beta\in X_{\on{pos}}$:
\begin{equation}
  \label{factor IC}
  \on{add}^*\CI^{\alpha+\beta}|_{\left(C^\alpha\times C^\beta\right)_{\on{disj}}}
  \cong\CI^\alpha\boxtimes\CI^\beta|_{\left(C^\alpha\times C^\beta\right)_{\on{disj}}}.
\end{equation}
Here $\boxtimes$ stands for the descent of the external product along the morphism of fiberwise
multiplication in $\bCP$ coming from \eqref{factor line}
(the operation corresponding under Riemann--Hilbert to the external product of
twisted $D$-modules).

\subsection{Factorizable sheaves}
\label{factor shv}
A {\em factorizable sheaf} $\CF$ is a collection of monodromic perverse sheaves
$\CF^{(\bmu,\bnu)}\in S\Perv_q(\bCP^{(\bmu,\bnu)})$ equipped with factorization isomorphisms
\begin{equation}
  \label{factor sheaves}
  \on{add}^*\CF^{(\bmu,\bnu)-\beta}|_{\left(C^{(\bmu,\bnu)}\times C^\beta\right)_{\on{disj}}}
  \cong\left(\CF^{(\bmu,\bnu)}\boxtimes\CI^\beta\right)|_{\left(C^{(\bmu,\bnu)}\times C^\beta\right)_{\on{disj}}},
\end{equation}
where in the definition of $\left(C^{(\bmu,\bnu)}\times C^\beta\right)_{\on{disj}}$ a divisor in $C^\beta$
is additionally required to miss $c\in C$. These isomorphisms should be compatible with the ones
in~\eqref{factor IC} under subdivisions of $\beta$.

We also impose the following finiteness conditions:

(a) $\CF^{(\bmu,\bnu)}\ne0$ only for $(\bmu,\bnu)$ belonging to finitely many cosets of the root
lattice $\BZ^{2N-2}\subset X$.

(b) For each such coset, there is $(\blambda,\btheta)$ such that the support of $\CF^{(\bmu,\bnu)}$
lies in $C^{(\bmu,\bnu)}_{\leq(\blambda,\btheta)}$ for $(\bmu,\bnu)$ in this coset.

(c) There are only finitely many $(\bmu,\bnu)$ such that the singular support of $\CF^{(\bmu,\bnu)}$
contains the conormal to (the fiber of $\bCP$ over) the point $(\bmu,\bnu)\cdot c$ in
$C^{(\bmu,\bnu)}_{\leq(\blambda,\btheta)}$.

The factorizable sheaves with the above finiteness conditions form an abelian category $\on{FS}$
(the morphisms are required to be compatible with the factorization isomorphisms).

One can also allow the marked point $c$ to vary in $C$; moreover, one can allow $n$ distinct
marked points to vary in $\oC^n$. The resulting category $\on{FS}_n$~\cite[\S3]{g} is used to
construct a braided tensor structure on $\on{FS}$ via the nearby cycles functor as the marked
points collide. The following theorem is proved similarly to the main result of~\cite{bfs}.
A conceptual proof is due to J.~Lurie, see the proof of~\cite[Theorem 29.2.3]{gl} in the classical
setup.

\begin{thm}
  \label{bfsl}
  There is a braided tensor equivalence $\Rep_q(\GL(N-1|N))\simeq\on{FS}$.
  In particular, for any pair of signatures $(\blambda,\btheta)$ satisfying condition~\eqref{serg},
  the corresponding irreducible $U_q(\fgl(N-1|N))$-module $V_{\blambda,\btheta}$ goes to
  the irreducible factorizable sheaf $\CF_{\blambda,\btheta}$. \hfill $\Box$
\end{thm}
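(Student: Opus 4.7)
\emph{Proof proposal.} The plan is to follow the Bezrukavnikov--Finkelberg--Schechtman strategy of~\cite{bfs}, adapted to the super setting, and to match the braided tensor structures via the common factorization framework, as codified conceptually by Lurie in~\cite[Theorem~29.2.3]{gl}. The first step is to construct the functor $\Phi\colon\Rep_q(\GL(N-1|N))\to\on{FS}$ on irreducibles: for $(\blambda,\btheta)$ satisfying~\eqref{serg}, define $\CF_{\blambda,\btheta}$ component-by-component. For $(\bmu,\bnu)\in(\blambda,\btheta)-X_{\on{pos}}$, write $\alpha:=(\blambda,\btheta)-(\bmu,\bnu)\in X_{\on{pos}}$ and identify $C^{(\bmu,\bnu)}_{\leq(\blambda,\btheta)}\simeq C^\alpha$. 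On the open stratum $\oC^\alpha$ of multiplicity-free divisors (supported away from $c$) the line bundle $\CP$ trivializes canonically by~\eqref{fibers P}, so there is a tautological one-dimensional local system with monodromy $q$ on $\bCP|_{\oC^\alpha}$; set $\CF^{(\bmu,\bnu)}_{\blambda,\btheta}$ to be its Goresky--MacPherson extension to $\bCP|_{C^{(\bmu,\bnu)}_{\leq(\blambda,\btheta)}}$. The factorization isomorphisms~\eqref{factor sheaves} are then inherited from the isomorphisms~\eqref{factor IC} for $\CI^\alpha$, and the finiteness conditions (a)--(c) of~\S\ref{factor shv} are routine.

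Second, I would extend $\Phi$ to all of $\Rep_q(\GL(N-1|N))$ by exhibiting quantum BGG-type resolutions of irreducibles by Verma modules and by introducing the corresponding \emph{standard} (shriek-extended) factorizable sheaves. To see $\Phi$ is fully faithful and essentially surjective, I would verify the Schechtman--Varchenko-style identification of weight spaces of Vermas with cohomology (with coefficients in $\CI^\alpha$) of configuration spaces; in the super setting with mixed Borel this uses Yamane's combinatorial description~\cite{y} of the quantum Serre relations for odd isotropic simple roots, which matches our choice of pure monodromy $q$ (and absence of sign) in $\CI^\alpha$. The induced isomorphisms on $\Ext$-groups between Vermas, combined with highest-weight arguments and d\'evissage, pin down $\Phi$ on all of $\on{FS}$.

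Finally, the braided tensor structures on the two sides are matched via the common factorization structure. On the geometric side the braided tensor structure is, as recalled in~\S\ref{factor shv}, defined via nearby cycles in $\on{FS}_n$ as the $n$ marked points collide in $\oC^n$; on the representation side the braiding is given by the universal $R$-matrix of $U_q(\fgl(N-1|N))$. Their agreement can be established either by a direct Drinfeld-associator computation \`a la~\cite{bfs}, or by the conceptual $\mathbb{E}_2$-algebra identification of Lurie~\cite[Theorem~29.2.3]{gl}. The hardest point---and the only genuinely new issue compared with the classical BFS---is the sign bookkeeping: one must verify that the Koszul signs forced on the representation-theoretic side by the odd parity of the simple roots exactly account for the \emph{absence} of the sign monodromy around $\oC^\alpha$ that would appear in the all-even classical situation of~\cite[\S3.4]{g}. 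Once this sign cancellation is checked, the hexagon and pentagon axioms match on both sides, and the required braided tensor equivalence $\Rep_q(\GL(N-1|N))\simeq\on{FS}$ follows.
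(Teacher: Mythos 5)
The paper does not actually prove Theorem~\ref{bfsl}: it is imported, with the indication that it ``is proved similarly to the main result of~\cite{bfs}'', the conceptual argument being Lurie's proof of~\cite[Theorem 29.2.3]{gl} in the classical (all simple roots even) setup. Your overall frame --- adapt the BFS/factorization strategy, build the irreducible objects $\CF_{\blambda,\btheta}$ as Goresky--MacPherson extensions of the tautological $q$-monodromic rank-one local systems on the open strata, match the two braidings through the common fusion/nearby-cycles mechanism, and track the disappearance of the sign monodromy coming from the fact that all simple roots of the mixed Borel are odd (cf.~\S\ref{mon shv}) --- is exactly the adaptation the paper has in mind, so in that respect you are on the same track as the citation.

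However, the middle step of your route has a genuine gap. You propose to extend $\Phi$ to all of $\Rep_q(\GL(N-1|N))$ via ``quantum BGG-type resolutions of irreducibles by Verma modules'' together with shriek-extended standard factorizable sheaves. At generic $q$ the Verma modules $M_{\bmu,\bnu}$ are infinite dimensional, hence not objects of $\Rep_q(\GL(N-1|N))$, and their would-be factorizable avatars fail the finiteness conditions (a)--(c) of~\S\ref{factor shv} (infinitely many weights produce conormal defects at $(\bmu,\bnu)\cdot c$ for infinitely many $(\bmu,\bnu)$, violating (c), which is precisely the geometric encoding of finite-dimensionality). Moreover, for atypical highest weights the finite-dimensional irreducibles of $\fgl(N-1|N)$ admit no finite Verma resolution (already for $\fgl(1|1)$ the expansion of an atypical irreducible in Verma classes is an infinite alternating sum), so the proposed d\'evissage does not terminate. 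This is also not how \cite{bfs} or \cite{gl} argue: there the functor is defined on the whole category at once (via configuration-space realizations of modules, resp.\ by identifying both sides with factorization modules over a common factorization algebra), and Vermas enter only through local (co)stalk computations at the marked point --- exactly the way they reappear in~\S\ref{proof no stalk} of the present paper. So your first and last steps are consistent with the intended proof, but the extension-to-the-whole-category step must be replaced by the actual BFS/Lurie mechanism rather than BGG resolutions.
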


\section{SW zastava}
In~\cite[\S3.3--3.8, \S4]{sw} Y.~Sakellaridis and J.~Wang have defined and studied the zastava spaces
$\mathcal Y$ for spherical varieties. In this section we specialize their results in our particular
case.

\subsection{Open zastava}
\label{open zas}
Given $X_{\on{pos}}\ni\alpha=\sum_{j=1}^{2N-2}a_j\alpha_j,\ a_j\in\BN$, we consider the moduli
space $\oZ^\alpha$ of the following data:

(a) A vector bundle $\CV$ on $C$ of rank $N-1$;

(b) A complete flag $0\subset\CV_1\subset\ldots\subset\CV_{N-2}\subset\CV_{N-1}=\CV$.
Equivalently, line subbundles $\eta_i\colon \CL_i\hookrightarrow\Lambda^i\CV$ of degrees
$\ell_i,\ 1\leq i\leq N-2$, satisfying Pl\"ucker relations. We also set
$\CL_0=\CO_C$, and $\CL_{N-1}=\Lambda^{N-1}\CV$.

(c) A complete flag $\CV\oplus\CO_C=\CU\supset\CU^1\supset\ldots\supset\CU^{N-1}\supset0$.
Equivalently, surjections
$\xi_i\colon \Lambda^i\CU=\Lambda^i\CV\oplus\Lambda^{i-1}\CV\twoheadrightarrow\CK_i$ to line bundles
of degrees $k_i, \ 1\leq i\leq N-1$, satisfying Pl\"ucker relations.

These data should be subject to the following genericity conditions:

\noindent The compositions $\xi_i\circ\eta_i\colon\CL_i\to\CK_i$ and
$\xi_i\circ\eta_{i-1}\colon\CL_{i-1}\to\CK_i$
are nonzero, so that $\CK_i=\CL_{i-1}(D_{2i-1})=\CL_i(D_{2i})$ for effective divisors $D_{2i-1},D_{2i}$
of degrees $a_{2i-1}, a_{2i}$ for $i=1,\ldots,N-1$. In particular,
$k_i=a_1+a_3+\ldots+a_{2i-1}-a_2-a_4-\ldots-a_{2i-2}$, and
$\ell_i=a_1+a_3+\ldots+a_{2i-1}-a_2-a_4-\ldots-a_{2i}$, where we formally set $a_0=0$.

\medskip

The collection of divisors $D_1,\ldots,D_{2N-2}$ defines a {\em factorization} morphism
$\pi\colon\oZ^\alpha\to C^\alpha$. According to~\cite[Proposition 3.4.1]{sw}, it enjoys the
factorization isomorphisms
\[\oZ^{\alpha+\beta}\times_{C^{\alpha+\beta}}\left(C^\alpha\times C^\beta\right)_{\on{disj}}\cong
\left(\oZ^\alpha\times\oZ^\beta\right)\times_{C^\alpha\times C^\beta}
\left(C^\alpha\times C^\beta\right)_{\on{disj}}.\]

According to~\cite[Theorem 6.3.4]{sw}, $\pi$ is a birational isomorphism (in a sharp contrast to
the case of classical zastava spaces).

\subsection{Compactified zastava}
\label{comp zas}
Modifying~\ref{open zas}(b,c) we obtain a relative compactification $\ol{Z}{}^\alpha\supset\oZ^\alpha$
over $C^\alpha$. Namely, we allow the {\em generalized} Borel structures in~\ref{open zas}(b,c),
that is we allow nonzero, but not necessarily fiberwise injective morphisms
$\eta_i\colon\CL_i\to\Lambda^i\CV$,
and not necessarily surjective morphisms $\xi_i\colon\Lambda^i\CU\to\CK_i$.
(However, $\eta_i$ and $\xi_i$ are still injective, resp.\ surjective, {\em generically}.)

According to~\cite[Theorem 6.3.4]{sw}, the proper morphism $\pi\colon\ol{Z}{}^\alpha\to C^\alpha$
is stratified semismall.

We will also need a version
$\pi\colon\ol{Z}{}^{(\bmu,\bnu)}_{\leq(\blambda,\btheta)}\to C^{(\bmu,\bnu)}_{\leq(\blambda,\btheta)}$ with
poles at $c\in C$. It is the moduli space of the following data:

a) An $X$-colored divisor
$D=-\sum_{i=1}^{N-1}\delta_i\Delta_i+\sum_{i=1}^N\varepsilon_iE_i\in C^{(\bmu,\bnu)}_{\leq(\blambda,\btheta)}$;

b) A vector bundle $\CV$ on $C$ of rank $N-1$;

c) A generalized $B_{N-1}$-structure
\[\eta_i\colon\CL_i:=\CO_C(-\Delta_1-\ldots-\Delta_i)\to\Lambda^i\CV,\ 1\leq i\leq N-1,\]

(and $\eta_{N-1}$ is assumed to be an isomorphism);

d) A generalized $B^-_N$-structure {\em with pole at $c\in C$}
\[\xi_i\colon\Lambda^i(\CV\oplus\CO_C)\dasharrow\CK_i:=\CO_C(-E_1-\ldots-E_i),\ 1\leq i\leq N,\]

(and $\xi_N$ is an isomorphism on $C\setminus\{c\}$, but may have zero or pole at $c$),

\noindent subject to the genericity condition that the compositions
$\xi_i\circ\eta_i\colon\CL_i\to\CK_i$ and $\xi_i\circ\eta_{i-1}\colon\CL_{i-1}\to\CK_i$ are nonzero.

\subsection{Convolution diagram}
\label{convo}
We will also need another version of SW zastava with poles: a partial resolution
$\br\colon\wh\CW{}^{(\bmu,\bnu)}_{\leq(\blambda,\btheta)}\to\ol{Z}{}^{(\bmu,\bnu)}_{\leq(\blambda,\btheta)}$
that plays a role of convolution diagram.
First recall that the $\GL(N-1,\bO)$-orbits in $\Gr_{\GL_N}$ are indexed by pairs of signatures
$(\blambda,\btheta)$ of lengths $N-1,N$ respectively. The representatives of these orbits are
written down explicitly e.g.~in the proof of~\cite[Lemma 2.3.2]{bft}. A $\GL(N-1,\bO)$-orbit
in $\Gr_{\GL_N}$ will be denoted $\BO_{\blambda,\btheta}$, and its closure will be denoted
$\ol\BO_{\blambda,\btheta}$.

Now $\wh\CW{}^{(\bmu,\bnu)}_{\leq(\blambda,\btheta)}$ is the moduli space of the following data:

a) An $X$-colored divisor
$D=-\sum_{i=1}^{N-1}\delta_i\Delta_i+\sum_{i=1}^N\varepsilon_iE_i\in C^{(\bmu,\bnu)}_{\leq(\blambda,\btheta)}$;

b) A vector bundle $\CV$ on $C$ of rank $N-1$;

c) A vector bundle $\CU$ on $C$ of rank $N$;

d) An isomorphism $\sigma\colon(\CV\oplus\CO_C)|_{C\setminus\{c\}}\iso\CU|_{C\setminus\{c\}}$
with pole of order $\leq(\blambda,\btheta)$ at $c\in C$. In other words, the Hecke transformation
$\sigma$ lies in the $\GL(N-1,\bO)$-orbit closure
$\GL(N-1,\bO)\backslash\ol\BO_{\blambda,\btheta}\subset\GL(N-1,\bO)\backslash\Gr_{\GL_N}$;

e) A generalized $B^-_{N-1}$-structure
\[\eta_i\colon\CL_i:=\CO_C(-\Delta_1-\ldots-\Delta_i)\to\Lambda^i\CV,\ 1\leq i\leq N-1,\]

(and $\eta_{N-1}$ is assumed to be an isomorphism);

f) A generalized $B^-_N$-structure
\[\xi_i\colon\Lambda^i\CU\to\CK_i:=\CO_C(-E_1-\ldots-E_i),\ 1\leq i\leq N,\]

(and $\xi_N$ is assumed to be an isomorphism),

\noindent subject to the genericity condition that the compositions
$\xi_i\circ \Lambda^i\sigma\circ\eta_i\colon\CL_i\dasharrow\CK_i$ and
$\xi_i\circ \Lambda^i\sigma\circ\eta_{i-1}\colon\CL_{i-1}\dasharrow\CK_i$ are nonzero (but may have
poles at $c\in C$).

\medskip

The same argument as in~\cite[Lemma 4.1.2]{sw} proves that the functor
$\wh\CW{}^{(\bmu,\bnu)}_{\leq(\blambda,\btheta)}$ is representable by (the same named) scheme
of finite type. The morphism
$\br\colon\wh\CW{}^{(\bmu,\bnu)}_{\leq(\blambda,\btheta)}\to\ol{Z}{}^{(\bmu,\bnu)}_{\leq(\blambda,\btheta)}$
takes the morphisms $\xi_i$ in~f) above to the composition
$\xi_i\circ\sigma\colon\Lambda^i(\CV\oplus\CO_C)\dasharrow\CK_i$ (with pole at $c\in C$).
The composition $\pi\circ\br$ is denoted by
$\bq\colon\wh\CW{}^{(\bmu,\bnu)}_{\leq(\blambda,\btheta)}\to C^{(\bmu,\bnu)}_{\leq(\blambda,\btheta)}$.
We also have a morphism \[\bp\colon\wh\CW{}^{(\bmu,\bnu)}_{\leq(\blambda,\btheta)}\to
\GL(N-1,\bO)\backslash\ol\BO_{\blambda,\btheta}\subset\GL(N-1,\bO)\backslash\Gr_{\GL_N}\]
that remembers only $\sigma$ in~d) above (restricted to the formal neighbourhood of $c\in C$).
Finally, we have an open subscheme $\jmath\colon\wt\CW{}^{(\bmu,\bnu)}_{\leq(\blambda,\btheta)}
\hookrightarrow\wh\CW{}^{(\bmu,\bnu)}_{\leq(\blambda,\btheta)}$ cut out by the condition that both
$B^-_{N-1}$- and $B^-_N$-structures in e,f) above are genuine, that is, all $\eta_i$ are
embeddings of line subbundles, and all $\xi_i$ are surjective.

If $(\blambda,\btheta)=(0,0)$, and $(\bmu,\bnu)=-\alpha\in-X_{\on{pos}}$, then clearly
$\wh\CW{}^{-\alpha}_{\leq(0,0)}=\ol{Z}{}^\alpha$ and $\wt\CW{}^{-\alpha}_{\leq(0,0)}=\oZ^\alpha$.

\subsection{Factorization}
\label{Factor zas}
The group $H:=\GL_{N-1}$ is block diagonally embedded into $G:=\GL_{N-1}\times\GL_N$.
We fix $B^-:=B^-_{N-1}\times B^-_N\subset G$ in generic position with respect to $H\subset G$.
Namely, we choose a base $e_1,\ldots,e_N$ of $\BC^N$, so that $\BC^{N-1}$ is spanned by
$e_1,\ldots,e_{N-1}$, and $\GL_{N-1}\subset\GL_N$ is $\GL(\BC^{N-1})$. Now $B^-_{N-1}$ preserves
the flag
\[\BC e_{N-1}\subset\BC e_{N-1}\oplus\BC e_{N-2}\subset\ldots\subset\BC e_{N-1}\oplus\cdots\oplus
\BC e_2\subset\BC^{N-1},\] while $B^-_N$ preserves the flag
\[\BC(e_1+e_N)\subset\BC(e_1+e_N)\oplus\BC(e_2+e_N)\subset\ldots\subset
\BC(e_1+e_N)\oplus\cdots\oplus\BC(e_{N-1}+e_N)\subset\BC^N.\]
The unipotent radical of $B^-$ is denoted by $U^-$.
The data of~\S\ref{convo}a-f) define an $H$-structure on a $G$-bundle
$(\CV\oplus\nobreak\CU)|_{C\setminus\{c\}}$ as well as a generically transversal $B^-$-structure on
$\CV\oplus\CU$. More precisely, these structures are transversal on $C\setminus\{c\}\setminus D$.
These transversal structures give rise to a trivialization of
$(\CV\oplus\CU)|_{C\setminus\{c\}\setminus D}$, i.e.\ a point of the Beilinson-Drinfeld Grassmannian
of $G$ on $C$. The factorization property of the Beilinson-Drinfeld Grassmannian implies
the factorization property of
$\bq\colon\wh\CW{}^{(\bmu,\bnu)}_{\leq(\blambda,\btheta)}\to C^{(\bmu,\bnu)}_{\leq(\blambda,\btheta)}$ as
in~\cite[Proposition 3.4.1]{sw}:
\begin{equation}
  \label{factor zas}
  \wh\CW{}^{(\bmu,\bnu)-\beta}_{\leq(\blambda,\btheta)}\times_{C^{(\bmu,\bnu)-\beta}_{\leq(\blambda,\btheta)}}
  \left(C^{(\bmu,\bnu)}_{\leq(\blambda,\btheta)}\times C^\beta\right)_{\on{disj}}
  \cong\left(\wh\CW{}^{(\bmu,\bnu)}_{\leq(\blambda,\btheta)}\times\ol{Z}{}^\beta\right)
  \times_{C^{(\bmu,\bnu)}_{\leq(\blambda,\btheta)}\times C^\beta}
  \left(C^{(\bmu,\bnu)}_{\leq(\blambda,\btheta)}\times C^\beta\right)_{\on{disj}},
\end{equation}
where $\beta\in X_{\on{pos}}$, and in the definition of
$\left(C^{(\bmu,\bnu)}_{\leq(\blambda,\btheta)}\times C^\beta\right)_{\on{disj}}$ a divisor in $C^\beta$
is additionally required to miss $c\in C$.

Recall that $C^{(\bmu,\bnu)}_{\leq(\blambda,\btheta)}\simeq C^\alpha$ (where
$\alpha=(\blambda,\btheta)-(\bmu,\bnu)$). The factorization property implies an isomorphism
\[\wh\CW{}^{(\bmu,\bnu)}_{\leq(\blambda,\btheta)}\supset\bq^{-1}(C\setminus\{c\})^\alpha\cong
\pi^{-1}(C\setminus\{c\})^\alpha\subset\ol{Z}{}^{(\bmu,\bnu)}_{\leq(\blambda,\btheta)}.\]
According to~\cite[Lemma 6.2.1]{sw}, $\ol{Z}{}^{(\bmu,\bnu)}_{\leq(\blambda,\btheta)}$ is irreducible,
and it is likely that $\wh\CW{}^{(\bmu,\bnu)}_{\leq(\blambda,\btheta)}$ is irreducible as well.
Instead of proving its irreducibility we just restrict our attention to its principal
irreducible component.

\begin{defn}
  \label{olW}
  \textup{(0)} If $(\bmu,\bnu)=(\blambda,\btheta)$, then
  $\wh\CW{}^{(\blambda,\btheta)}_{\leq(\blambda,\btheta)}=\wt\CW{}^{(\blambda,\btheta)}_{\leq(\blambda,\btheta)}$ is just
  one point (see~Lemmas~\ref{central fib} and~\ref{empty} below), and we define
  $\CW^{(\blambda,\btheta)}_{\leq(\blambda,\btheta)}=\ol\CW{}^{(\blambda,\btheta)}_{\leq(\blambda,\btheta)}$ as this
  same point.

  \textup{(a)} If $(\bmu,\bnu)<(\blambda,\btheta)$, then we define
  $\ol\CW{}^{(\bmu,\bnu)}_{\leq(\blambda,\btheta)}$ as the closure of
  $\bq^{-1}(C\setminus\{c\})^\alpha$ in $\wh\CW{}^{(\bmu,\bnu)}_{\leq(\blambda,\btheta)}$.

  \textup{(b)} We define an open subscheme
  $\CW^{(\bmu,\bnu)}_{\leq(\blambda,\btheta)}\subset\ol\CW{}^{(\bmu,\bnu)}_{\leq(\blambda,\btheta)}$
  as $\ol\CW{}^{(\bmu,\bnu)}_{\leq(\blambda,\btheta)}\cap\wt\CW{}^{(\bmu,\bnu)}_{\leq(\blambda,\btheta)}$.

  \textup{(c)} We keep the notation $\jmath$ for the open embedding
  $\CW^{(\bmu,\bnu)}_{\leq(\blambda,\btheta)}\hookrightarrow\ol\CW{}^{(\bmu,\bnu)}_{\leq(\blambda,\btheta)}$.
\end{defn}

Note that $\ol\CW{}^{(\bmu,\bnu)}_{\leq(\blambda,\btheta)}$ and $\CW^{(\bmu,\bnu)}_{\leq(\blambda,\btheta)}$
inherit the factorization property~\eqref{factor zas} from
$\wh\CW{}^{(\bmu,\bnu)-\beta}_{\leq(\blambda,\btheta)}$.

We are interested in the central fiber $\wh\sW{}^{(\bmu,\bnu)}_{\leq(\blambda,\btheta)}
:=\bq^{-1}((\bmu,\bnu)\cdot c)\subset\wh\CW{}^{(\bmu,\bnu)}_{\leq(\blambda,\btheta)}$.
We also set \[\wt\sW{}^{(\bmu,\bnu)}_{\leq(\blambda,\btheta)}=\wh\sW{}^{(\bmu,\bnu)}_{\leq(\blambda,\btheta)}\cap
\wt\CW{}^{(\bmu,\bnu)}_{\leq(\blambda,\btheta)},\
\ol\sW{}^{(\bmu,\bnu)}_{\leq(\blambda,\btheta)}=\wh\sW{}^{(\bmu,\bnu)}_{\leq(\blambda,\btheta)}\cap
\ol\CW{}^{(\bmu,\bnu)}_{\leq(\blambda,\btheta)},\
\sW^{(\bmu,\bnu)}_{\leq(\blambda,\btheta)}=\wh\sW{}^{(\bmu,\bnu)}_{\leq(\blambda,\btheta)}\cap
\CW^{(\bmu,\bnu)}_{\leq(\blambda,\btheta)}.\]

In order to describe
this fiber, note that the embedding $H\hookrightarrow G$ induces an embedding
$H(\bF)\hookrightarrow G(\bF)$. Also, the embedding $\GL_N\hookrightarrow G$ induces an
embedding $\ol\BO_{\blambda,\btheta}\subset\Gr_{\GL_N}\hookrightarrow\Gr_G$. We consider the
$H(\bF)$-saturation $\ol\sO_{\blambda,\btheta}$ of $\ol\BO_{\blambda,\btheta}\subset\Gr_G$.
According to the proof of~\cite[Lemma 2.3.2]{bft}, $\ol\sO_{\blambda,\btheta}$ coincides
with the $H(\bF)$-saturation of $\ol\Gr{}^{\blambda^*}_{\GL_{N-1}}\times\ol\Gr{}^{\btheta}_{\GL_N}$,
where for $\blambda=(\lambda_1,\ldots,\lambda_{N-1})$ we set
$\blambda^*=(-\lambda_{N-1},\ldots,-\lambda_1$).
Finally, we consider the semiinfinite $U^-(\bF)$-orbit
$T^{\bmu^*,\bnu}\subset\Gr_G$. The following lemma is proved as~\cite[Lemma 4.3.2]{sw}:

\begin{lem}
  \label{central fib}
  The following reduced schemes are naturally isomorphic:
  $\left(\wt\sW{}^{(\bmu,\bnu)}_{\leq(\blambda,\btheta)}\right)_{\on{red}}
  \cong\left(T^{\bmu^*,\bnu}\cap\ol\sO_{\blambda,\btheta}\right)_{\on{red}}$
  and $\left(\wh\sW{}^{(\bmu,\bnu)}_{\leq(\blambda,\btheta)}\right)_{\on{red}}
  \cong\left(\ol{T}{}^{\bmu^*,\bnu}\cap\ol\sO_{\blambda,\btheta}\right)_{\on{red}}$. \hfill $\Box$
  \end{lem}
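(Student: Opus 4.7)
The plan is to adapt the proof of \cite[Lemma 4.3.2]{sw} to our ``pole at $c$'' setting and to the product group $G=\GL_{N-1}\x\GL_N$. In the central fiber the $X$-colored divisor $D$ is supported at $c$, so the generalized $B^-_{N-1}$- and $B^-_N$-structures on $\CV$ and $\CU$ from \S\ref{convo}(e,f) are honest on $C\setminus\{c\}$. Combining them with the isomorphism $\sigma\colon(\CV\oplus\CO_C)|_{C\setminus\{c\}}\iso\CU|_{C\setminus\{c\}}$, which endows the $G$-bundle $(\CV,\CU)$ with an $H$-structure away from $c$, the generic position of $B^-\subset G$ relative to $H\subset G$ guarantees transversality of the $H$- and $B^-$-reductions on an open dense locus. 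The transversal pair of reductions canonically trivializes $(\CV,\CU)$ on $C\setminus\{c\}$, which yields a map $\wh\sW{}^{(\bmu,\bnu)}_{\leq(\blambda,\btheta)}\to\Gr_G$.

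To identify the image, two things will be verified. First, the Hecke-modification datum $\sigma\in\GL(N-1,\bO)\bsl\ol\BO_{\blambda,\btheta}\subset\GL(N-1,\bO)\bsl\Gr_{\GL_N}$ translates, via the $H(\bF)$-action coming from the $H$-structure, into the condition of belonging to the $H(\bF)$-saturation $\ol\sO_{\blambda,\btheta}\subset\Gr_G$. Second, the degrees of the line subbundles $\CL_i$ and line quotient bundles $\CK_i$ are read off from $D=(\bmu,\bnu)\cdot c$, and they match the cocharacter $(\bmu^*,\bnu)$ of the semiinfinite $U^-(\bF)$-orbit. Genuine $B^-$-structures (the case of $\wt\sW$) place the image in the open orbit $T^{\bmu^*,\bnu}$, while generalized $B^-$-structures (the case of $\wh\sW$) allow all degenerations and place it in the closure $\ol{T}{}^{\bmu^*,\bnu}$.

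In the reverse direction, a point of the intersection reconstructs the bundles $\CV,\CU$, the Borel reductions, and the Hecke modification $\sigma$ by the standard descent from the trivialization on $C\setminus\{c\}$. The main obstacle will be to verify that the resulting bijection upgrades to an isomorphism of reduced schemes, which requires checking the functorial compatibility of the trivialization construction; a closely related technical point is to show that the $\GL(N-1,\bO)$-orbit condition on $\sigma$ matches the $H(\bF)$-saturation condition on $\Gr_G$ exactly. Both are handled by the genericity of $B^-\subset G$ relative to $H$ together with the fact that $D$ is supported at $c$, exactly as in \cite[Lemma 4.3.2]{sw}. The $\wt\sW$ claim then follows from the $\wh\sW$ claim by restricting to the open subscheme where the Borel structures are genuine, which corresponds to the open semiinfinite orbit.
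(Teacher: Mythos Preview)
Your proposal is correct and follows essentially the same approach as the paper: the paper gives no proof beyond the reference ``proved as~\cite[Lemma 4.3.2]{sw}'', and you have accurately unpacked what that adaptation entails in the present setting (poles at $c$, the product group $G=\GL_{N-1}\times\GL_N$, and the $H(\bF)$-saturation $\ol\sO_{\blambda,\btheta}$ replacing a $G(\bO)$-orbit closure). Your identification of the cocharacter $(\bmu^*,\bnu)$ via the degrees of the line bundles $\CL_i,\CK_i$, and of the open versus closed semiinfinite orbit via genuine versus generalized $B^-$-structures, matches the paper's conventions.
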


\subsection{Smoothness}
\label{Smooth}
The goal of this subsection is the following

\begin{prop}
  \label{smooth}
  The morphism $\bp\circ\jmath\colon\wt\CW{}^{(\bmu,\bnu)}_{\leq(\blambda,\btheta)}\to
  \GL(N-1,\bO)\backslash\ol\BO_{\blambda,\btheta}$ is smooth.
\end{prop}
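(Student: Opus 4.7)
The plan is to identify each fiber of $\bp\circ\jmath$ with a smooth stack of locally constant dimension, and then to upgrade this fiberwise smoothness to smoothness of the morphism itself via a formal-smoothness argument.

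First, I would observe that in the definition of $\wt\CW{}^{(\bmu,\bnu)}_{\leq(\blambda,\btheta)}$ the $X$-colored divisor $D$ is not independent data: the constituent effective divisors $D_{2i-1}, D_{2i}$ are nothing but the zero loci of the compositions $\xi_i\circ\Lambda^i\sigma\circ\eta_{i-1}$ and $\xi_i\circ\Lambda^i\sigma\circ\eta_i$, which are well-defined sections of appropriate line bundles on $C$ once the $B^-$-reductions are genuine. Accordingly, $\wt\CW{}^{(\bmu,\bnu)}_{\leq(\blambda,\btheta)}$ is the union of those connected components (distinguished by the degrees of the line subbundles $\CL_i$ and line quotients $\CK_i$) of the moduli stack $\calY$ of tuples $(\CV,\CU,\sigma,\eta,\xi)$ with genuine $B^-_{N-1}$- and $B^-_N$-reductions, Hecke bound $(\blambda,\btheta)$ at $c$, and the genericity condition. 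It therefore suffices to prove smoothness of the forgetful map $\bp'\colon\calY\to\GL(N-1,\bO)\backslash\ol\BO_{\blambda,\btheta}$.

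Next, I would describe the fiber of $\bp'$ over a closed point $\sigma_0$. Since any rank-$(N-1)$ bundle on the formal disk at $c$ is trivial, the global bundle $\CV$ on $C$ is unconstrained by the local Hecke datum: the $\GL(N-1,\bO)$-ambiguity of the local trivialization of $\CV$ is exactly what the Hecke quotient kills. Once $\CV$ is chosen, the global pair $(\CU,\sigma)$ is uniquely determined by gluing $\CV\oplus\CO_C$ on $C\setminus\{c\}$ to the Hecke modification encoded by $\sigma_0$ at $c$. The remaining data $(\eta,\xi)$ form an open substack (cut out by genericity) of a product of two flag bundles over $\Bun_{\GL_{N-1}}(C)$. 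Since $\Bun$ is a smooth stack and flag bundles are smooth projective fibrations over it, this fiber is smooth of dimension depending only on the discrete numerical data.

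Finally, I would upgrade fiberwise smoothness to smoothness of $\bp'$ itself via a formal-smoothness check. Given a square-zero thickening of a test scheme mapping to the Hecke stack: a lift of $\sigma$ is provided; a lift of $\CV$ is unobstructed by smoothness of $\Bun_{\GL_{N-1}}$ and then determines $(\CU,\sigma)$ globally by gluing; lifts of the reductions $(\eta,\xi)$ are unobstructed by smoothness of the flag bundles; and the genericity condition is preserved by openness under small deformations. Combined with constancy of relative dimension on each connected component, this yields smoothness. The main technical subtlety I anticipate is the clean passage between the local Hecke stack (a quotient of the affine Grassmannian at the single point $c$) and the global bundle/reduction moduli on the whole curve $C$; this is best handled in the Beilinson--Drinfeld formalism already implicit in \S\ref{Factor zas}, together with Lemma~\ref{central fib} for matching the local pieces.
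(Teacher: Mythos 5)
Your strategy of analyzing the moduli problem of $\wt\CW{}^{(\bmu,\bnu)}_{\leq(\blambda,\btheta)}$ directly has a genuine gap at its key step. The reductions $\eta,\xi$ are flags of subbundles over the whole projective curve $C$, i.e.\ sections over $C$ of relative flag varieties; smoothness of the flag fibration gives infinitesimal lifting of sections only over an affine base, whereas here the obstruction to extending such a section across a square-zero thickening lies in $H^1(C,s^*T_{\mathrm{rel}})\otimes I$, which has no reason to vanish. So the assertion that ``lifts of the reductions $(\eta,\xi)$ are unobstructed by smoothness of the flag bundles'' is false as stated, and the step-by-step lifting (first choose an arbitrary lift of $\CV$, then lift the flags) can genuinely fail for the chosen lift. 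For the same reason your fiber description is off: the fiber is not an open piece of a product of flag \emph{bundles} over $\Bun_{\GL_{N-1}}(C)$ but of the space of \emph{global} flags of subbundles of $\CV$ (resp.\ of the glued $\CU$) of prescribed degrees --- a Quot-scheme-type space that is singular in general; it is exactly the genericity/transversality condition that makes it smooth, and that is where the real work lies, not a formal consequence of smoothness of $\Bun$ and of flag fibrations. What is true is that the \emph{combined} deformation problem (bundle, flags, and the prescribed local datum at $c$, all at once) is unobstructed, because the relevant relative tangent complex has vanishing obstruction group thanks to $H^2(C,\cdot)=0$ on a curve together with $H^1(\wh{C}_c,\cdot)=0$ on the disc --- but your sketch never runs this combined computation.

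This is also where your route diverges from the paper's, which sidesteps the flags entirely: it introduces the flag-free global model $\scrM_{H\backslash G,\infty\cdot c}$ (a $G$-bundle on $C$ with an $H$-reduction off $c$), shows it is an ind-algebraic stack, proves that the restriction map $p$ to $H(\bF)\backslash\Gr_G\cong\GL(N-1,\bO)\backslash\Gr_{\GL_N}$ is formally smooth via a cartesian diagram reducing to formal smoothness of $\Bun_H(C)\to\Bun_H(\wh{C}_c)$ and of its primed variant, and then invokes the argument of \cite[\S3.5.3]{sw} (going back to \cite[Theorem 16.2.1]{gn}) to the effect that $\wt\CW{}^{(\bmu,\bnu)}_{\leq(\blambda,\btheta)}$ is locally in the smooth topology isomorphic to $\scrM_{H\backslash G,\leq(\blambda,\btheta)\cdot c}$. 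That last smooth-local identification is precisely the place where the $B^-$-structures, the divisor data and the genericity condition are absorbed, and your proposal offers no substitute for it; repairing your direct approach would essentially amount to redoing that computation.
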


In order to prove the proposition, following Y.~Sakellaridis and J.~Wang, we consider
the prestack $\scrM_{H\backslash G,\infty\cdot c}$ classifying the following data:

(a) a $G$-bundle $\scrP$ on $C$;

(b) a reduction of $\scrP$ to $H$ on $C\setminus\{c\}$, i.e.\ a section
$s\colon C\setminus\{c\}\to(H\backslash G)\stackrel{G}{\times}\scrP$.

\begin{lem}[J.~Wang]
  \label{jwang}
  The prestack $\scrM_{H\backslash G,\infty\cdot c}$ is an ind-algebraic stack of ind-locally finite
  type.
\end{lem}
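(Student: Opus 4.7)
The plan is to exploit the affineness of $H\backslash G$ and stratify by the order of pole at $c$.

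First, I would invoke Matsushima's theorem: since $H=\GL_{N-1}$ is reductive, the homogeneous space $H\backslash G$ is an affine variety (concretely, $[(g_1,g_2)]\mapsto \iota(g_1)^{-1}g_2$ identifies $H\backslash G$ with $\GL_N$). Fix a closed $G$-equivariant embedding $H\backslash G\hookrightarrow V$ into a finite dimensional $G$-module $V$. Then a reduction $s\colon C\setminus\{c\}\to(H\backslash G)\sGtimes\scrP$ is the same datum as a section $\tilde s$ of the associated vector bundle $V_\scrP:=V\sGtimes\scrP$ over $C\setminus\{c\}$ whose image is contained in the closed subbundle $(H\backslash G)\sGtimes\scrP\subset V_\scrP$.

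Next, I would filter $\scrM_{H\backslash G,\infty\cdot c}$ by the order of pole at $c$. For each integer $n\geq 0$, let $\scrM_{\leq n}\subset\scrM_{H\backslash G,\infty\cdot c}$ be the substack classifying $(\scrP,s)$ such that $\tilde s$ extends to a global section of $V_\scrP(n\cdot c):=V_\scrP\otimes\CO_C(n\cdot c)$. Because any algebraic section of $V_\scrP$ on $C\setminus\{c\}$ has only finite order of pole at $c$, we have $\scrM_{H\backslash G,\infty\cdot c}=\bigcup_{n\geq 0}\scrM_{\leq n}$, and the natural maps $\scrM_{\leq n}\hookrightarrow\scrM_{\leq n+1}$ are closed embeddings of substacks.

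It remains to show each $\scrM_{\leq n}$ is an algebraic stack locally of finite type over $\Bun_G$. Consider the total space $\scrN_n\to\Bun_G$ parametrizing $(\scrP,s)$ with $s\in H^0(C,V_\scrP(n\cdot c))$. Letting $\pi\colon C\times\Bun_G\to\Bun_G$ be the projection and $\scrV_n$ the universal $V$-bundle twisted by $\CO_C(n\cdot c)$, we have $\scrN_n=\Tot\bigl(R^0\pi_*\scrV_n\bigr)$; since $R\pi_*\scrV_n$ is a perfect complex on $\Bun_G$ (which is itself algebraic and locally of finite type), $\scrN_n$ is an algebraic stack locally of finite type. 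Finally, the condition that the section lands in the closed subvariety $H\backslash G\subset V$ (as a section of $V_\scrP$ on $C\setminus\{c\}$) is a closed condition on $\scrN_n$, because closedness of $H\backslash G\hookrightarrow V$ propagates to closedness of the corresponding subfunctor of $\Maps(C\setminus\{c\},-)$. Hence $\scrM_{\leq n}\subset\scrN_n$ is closed, so $\scrM_{H\backslash G,\infty\cdot c}$ is ind-algebraic and ind-locally of finite type.

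The main obstacle I anticipate is purely technical: correctly defining $\scrN_n$ as a genuine algebraic stack (handling the potential non-vanishing of higher $R\pi_*$ by working with the perfect complex or by stratifying $\Bun_G$ so that $R\pi_*\scrV_n$ becomes a vector bundle), and verifying the compatibility that, on taking the union in $n$, one truly captures all pairs $(\scrP,s)$, with no constraint issued by how $s$ behaves on the formal disc around $c$ beyond what the affineness of $H\backslash G$ already imposes.
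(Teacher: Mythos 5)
Your proposal is correct and takes essentially the same route as the paper: both use a closed $G$-equivariant embedding $H\backslash G\hookrightarrow V$ into a finite-dimensional $G$-module (available since $H\backslash G$ is affine), exhaust sections over $C\setminus\{c\}$ by the order of pole at $c$ via the twists $\CV(n\cdot c)$, use representability of the resulting section spaces, and cut out the locus of sections landing in $H\backslash G$ as a closed condition. The only (cosmetic) difference is that the paper argues fiberwise over a test ring to conclude that $\scrM_{H\backslash G,\infty\cdot c}\to\Bun_G(C)$ is ind-representable, invoking finite generation of the kernel of $\on{Sym}(V^*)\twoheadrightarrow\BC[H\backslash G]$ for closedness, rather than assembling a global section stack $\scrN_n$ over $\Bun_G$ via $R\pi_*$ as you do.
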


\begin{proof}
  Let $\on{Sect}(C\setminus\{c\},(H\backslash G)\stackrel{G}{\times}\scrP)$ denote the fiber
  of $\scrM_{H\backslash G,\infty\cdot c}$ over a fixed $\scrP$.
  We choose a finite dimensional right $G$-module $V$ with a $G$-equivariant closed embedding
  $H\backslash G\hookrightarrow V$. We consider the associated vector bundle
  $\CV=V\stackrel{G}{\times}\scrP$ on $C\times\Spec R$ for a test ring $R$.
  Let $\on{Sect}(C\setminus\{c\},\CV)$ denote the presheaf over $\Spec R$ representing
  sections $(C\setminus\{c\})\times\Spec R'\to\CV$. Then $\on{Sect}(C\setminus\{c\},\CV)$
  sends $R'$ to
  \[\Hom_{\CO_C\otimes R}(\CV^\vee,\CO_C(\infty\cdot c)\otimes R')=\lim_\to\Gamma(C\times\Spec R',
  \CV(n\cdot c)\otimes R').\]
  For each $n\in\BN$, the presheaf $\Gamma(C\times\Spec R',\CV(n\cdot c)\otimes R')$ is
  represented by a scheme locally of finite type, so $\on{Sect}(C\setminus\{c\},\CV)$ is an
  ind-scheme.

  Now $\on{Sect}(C\setminus\{c\},(H\backslash G)\stackrel{G}{\times}\scrP)\hookrightarrow
  \on{Sect}(C\setminus\{c\},\CV)$ is a closed embedding since the kernel of
  $\on{Sym}(V^*)\twoheadrightarrow\BC[H\backslash G]$ is also finitely generated.
  Hence the morphism $\scrM_{H\backslash G,\infty\cdot c}\to\Bun_G(C)$ is ind-representable,
  and the lemma is proved.
\end{proof}

We have a morphism \[p\colon\scrM_{H\backslash G,\infty\cdot c}\to H(\bF)\backslash G(\bF)/G(\bO)=
H(\bF)\backslash\Gr_G\cong\GL(N-1,\bO)\backslash\Gr_{\GL_N}\] by restricting to the formal
neighbourhood $\wh{C}_c$ of $c\in C$.

\begin{lem}
  \label{abrav}
  The morphism $p$ is formally smooth.
\end{lem}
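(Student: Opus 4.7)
The plan is to verify the infinitesimal lifting criterion for $p$ directly. Let $R'\twoheadrightarrow R$ be a square-zero extension of test rings with ideal $I$, let $(\scrP,s)$ be an $R$-point of $\scrM_{H\backslash G,\infty\cdot c}$, and let $(\scrP_D',\mathscr{Q}_V',\tau_{R'})$ be an $R'$-lift of its image under $p$ in $H(\bF)\backslash\Gr_G$. The task is to produce an $R'$-lift of $(\scrP,s)$ whose image under $p$ is isomorphic to this prescribed triple. The key geometric input is the affineness of $U:=C\setminus\{c\}$ and $V:=\wh C_c\setminus\{c\}$, which forces all higher coherent cohomology with coefficients in a coherent sheaf to vanish there.

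Using Beauville--Laszlo descent, an $R$-point of $\scrM_{H\backslash G,\infty\cdot c}$ is equivalent to a triple $(\scrP_D,\mathscr{Q}_U,\tau)$ comprising a $G$-bundle on $\wh C_c\times\Spec R$, an $H$-bundle on $U\times\Spec R$, and a gluing $\tau\colon\mathscr{Q}_U|_V\stackrel{H}{\times}G\iso\scrP_D|_V$ on the punctured formal disk. An $R$-point of $H(\bF)\backslash\Gr_G$ is the analogous triple with the $H$-bundle defined only on $V\times\Spec R$, and under this identification $p$ simply forgets the extension of $\mathscr{Q}_U$ from $V$ across the point $c$. The lifting problem thus becomes: lift $\mathscr{Q}_U$ to an $H$-bundle on $U\times\Spec R'$, identify its restriction to $V\times\Spec R'$ with the prescribed $\mathscr{Q}_V'$, and transport the given gluing $\tau_{R'}$ along this identification.

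Each step is unobstructed thanks to affineness. The obstruction to lifting $\mathscr{Q}_U$ to an $H$-bundle $\mathscr{Q}_U'$ over $U\times\Spec R'$ lies in $H^2(U,\on{ad}(\mathscr{Q}_U)\otimes I)=0$, so such a lift exists. The restriction $\mathscr{Q}_U'|_V$ is then a second lift of $\mathscr{Q}_U|_V$ alongside the prescribed $\mathscr{Q}_V'$; any two such lifts differ by a class in $H^1(V,\on{ad}(\mathscr{Q}_U)\otimes I)=0$, hence are isomorphic, and one chooses any such isomorphism $\iota\colon\mathscr{Q}_U'|_V\iso\mathscr{Q}_V'$ reducing to the identity over $R$. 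Transporting $\tau_{R'}$ through $\iota\stackrel{H}{\times}G$ yields a gluing $\tau'\colon\mathscr{Q}_U'|_V\stackrel{H}{\times}G\iso\scrP_D'|_V$ lifting $\tau$, and the assembled triple $(\scrP_D',\mathscr{Q}_U',\tau')$ is the desired $R'$-point of $\scrM_{H\backslash G,\infty\cdot c}$ whose image under $p$ is canonically isomorphic, via $\iota$, to $(\scrP_D',\mathscr{Q}_V',\tau_{R'})$ in the groupoid $H(\bF)\backslash\Gr_G$.

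The main subtlety lies in the Beauville--Laszlo reformulation itself: one must verify that the category of such triples really is equivalent to $\scrM_{H\backslash G,\infty\cdot c}$, respectively to $H(\bF)\backslash\Gr_G$, as ind-stacks over $\Spec R$, so that the lifting problem for $p$ is exhausted by the local data above. Granted this, the argument is automatic. Conceptually, $p$ is a restriction-of-sections map of $\Maps$-style functors valued in the smooth quotient stack $H\backslash G$, and such restriction maps are formally smooth whenever the complement of the restriction locus is an affine curve and the target is smooth, since all deformations and obstructions are then controlled by coherent cohomology on an affine base.
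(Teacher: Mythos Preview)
Your argument is correct and takes a genuinely different route from the paper's. The paper introduces an auxiliary stack $\scrM'_{H\backslash G,\infty\cdot c}$ classifying a $G$-bundle on $C$, an $H$-bundle on all of $C$ (not just $C\setminus\{c\}$), and an isomorphism of the induced $G$-bundles away from $c$, together with its local analogue on the formal disc; these fit into a commutative diagram with cartesian squares linking $p$ to the restriction map $p''\colon\Bun_H(C)\to\Bun_H(\wh C_c)$, and the proof concludes by invoking the formal smoothness of $p''$ and descending through the diagram. You instead apply Beauville--Laszlo at the outset to rewrite $p$ as restriction of the $H$-bundle from the affine open $U=C\setminus\{c\}$ to the affine punctured disc $V$, and then check the infinitesimal lifting criterion by hand via vanishing of quasi-coherent cohomology on affines. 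The paper's diagrammatic reduction is slicker to state and isolates the geometric content in the well-known map $p''$, but it tacitly relies on a descent step (formal smoothness is local on the target along the covering $\scrM'^{\,\on{loc}}\to H(\bF)\backslash\Gr_G$); your approach bypasses the auxiliary stacks and that descent, at the price of writing out the deformation theory explicitly. One small wording slip: when you say $p$ ``forgets the extension of $\mathscr{Q}_U$ from $V$ across the point $c$'', you mean the restriction from $U$ to $V$ --- neither $U$ nor $V$ contains $c$.
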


\begin{proof}
  We consider an auxiliary stack $\scrM'_{H\backslash G,\infty\cdot c}$ over $\scrM_{H\backslash G,\infty\cdot c}$
  classifying the following data:

  (a) a $G$-bundle $\scrP_G$ on $C$;

  (b) an $H$-bundle $\scrP_H$ on $C$;

  (c) an isomorphism $\scrP_G|_{C\setminus c}\iso\Ind_H^G\scrP_H|_{C\setminus c}$.

  \noindent We also have a local version $\scrM^{\prime\on{loc}}_{H\backslash G,\infty\cdot c}$ replacing
  the global curve $C$ by the formal disc $\wh{C}_c$ in the above definition. We have a morphism
  $p'\colon \scrM'_{H\backslash G,\infty\cdot c}\to\scrM^{\prime\on{loc}}_{H\backslash G,\infty\cdot c}$ by
  restricting to the formal neighbourhood $\wh{C}_c\subset C$.

We have the following diagram with cartesian squares:
  \[\begin{CD}
  \Bun_H(C) @<<< \scrM'_{H\backslash G,\infty\cdot c} @>>> \scrM_{H\backslash G,\infty\cdot c}\\
    @VV{p''}V @VV{p'}V @VV{p}V\\
    \Bun_H^{\on{loc}}(C) @<<<  \scrM^{\prime\on{loc}}_{H\backslash G,\infty\cdot c} @>s>> H(\bF)\backslash\Gr_G,
  \end{CD}\]
where $\Bun_H^{\on{loc}}(C)$ stands for $\Bun_H(\wh{C}_c)\cong H(\bO)\backslash\on{pt}$. The fiber of $p''$
is the moduli space of $H$-bundles on $C$ equipped with a trivialization at the formal neighbourhood
of $c\in C$. This is a smooth scheme, so we conclude that $p''$ is formally smooth. It follows that
$p'$ is formally smooth as well. Also note that
$\scrM^{\prime\on{loc}}_{H\backslash G,\infty\cdot c}\cong H(\bO)\backslash\Gr_G$, and the fibers of $s$
are all isomorphic to $H(\bO)\backslash H(\bF)\cong\Gr_H$, so that $s$ is formally smooth as well.
Finally, from the formal smoothness of $s$ and $p'$ we deduce the formal smoothness of $p$.
\end{proof}

Now we are ready to prove~Proposition~\ref{smooth}. We denote by
$\scrM_{H\backslash G,\leq(\blambda,\btheta)\cdot c}\subset\scrM_{H\backslash G,\infty\cdot c}$ the closed substack,
the preimage $p^{-1}(\ol\BO_{\blambda,\btheta})$ of
$\ol\BO_{\blambda,\btheta}\subset\GL(N-1,\bO)\backslash\Gr_{\GL_N}$. By~Lemma~\ref{abrav}, the morphism
$p\colon\scrM_{H\backslash G,\leq(\blambda,\btheta)\cdot c}\to\ol\BO_{\blambda,\btheta}$ is smooth.
On the other hand, the argument of~\cite[\S3.5.3]{sw}
(going back at least to~\cite[Theorem 16.2.1]{gn}) shows that
$\wt\CW{}^{(\bmu,\bnu)}_{\leq(\blambda,\btheta)}$ is locally in smooth topology isomorphic to
$\scrM_{H\backslash G,\leq(\blambda,\btheta)\cdot c}$.
This completes the proof of~Proposition~\ref{smooth}. \hfill $\Box$

\begin{defn}
  \label{pcirc}
  For a constructible complex $\CM$ on $\GL(N-1,\bO)\backslash\ol\BO_{\blambda,\btheta}$ we denote
  by $\bp^\circ\CM$ the constructible complex
  $\bp^*\CM[\dim\ol\CW{}^{(\bmu,\bnu)}_{\leq(\blambda,\btheta)}-\dim\ol\BO_{\blambda,\btheta}]$ on
  $\ol\CW{}^{(\bmu,\bnu)}_{\leq(\blambda,\btheta)}$.
  \end{defn}

\subsection{Semismallness}
The goal of this subsection is the following

\begin{prop}
  \label{semismall}
The morphism $\bq\colon\ol\CW{}^{(\bmu,\bnu)}_{\leq(\blambda,\btheta)}\to C^{(\bmu,\bnu)}_{\leq(\blambda,\btheta)}$
  is stratified semismall.
\end{prop}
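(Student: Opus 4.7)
The plan is to bootstrap from the classical semismallness result \cite[Theorem 6.3.4]{sw} for $\pi\colon\ol{Z}{}^\beta\to C^\beta$, using the factorization property \eqref{factor zas} to reduce the general statement to a dimension bound on central fibers at $c\in C$.

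First I would stratify the base $C^{(\bmu,\bnu)}_{\leq(\blambda,\btheta)}\cong C^\alpha$, with $\alpha=(\blambda,\btheta)-(\bmu,\bnu)$, by two pieces of data: (i) the weight $(\bmu',\bnu')$ of the part of the divisor supported at $c$, which produces a decomposition $\alpha=\alpha'+\beta$ with $\alpha'=(\blambda,\btheta)-(\bmu',\bnu')$ and $\beta\in X_{\on{pos}}$, and (ii) the Sakellaridis--Wang semismall stratification of the remaining factor $C^\beta$ of points disjoint from $c$. A stratum $S$ of this form has codimension $|\alpha'|+\on{codim}_{C^\beta}(S_\beta)$ in $C^\alpha$. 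Next I would apply the factorization isomorphism \eqref{factor zas} with weight $(\bmu',\bnu')$ at $c$: restricted to the principal irreducible component, this identifies the $\bq$-fiber over a point $D=(\bmu',\bnu')\cdot c+D'\in S$ with the product $\ol\sW{}^{(\bmu',\bnu')}_{\leq(\blambda,\btheta)}\times\pi^{-1}(D')$.

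Combining this decomposition with the semismallness of the classical $\pi$, namely $2\dim\pi^{-1}(D')\le\on{codim}_{C^\beta}(S_\beta)$, the semismall inequality for $\bq$ reduces to the central-fiber bound
\[2\dim\ol\sW{}^{(\bmu',\bnu')}_{\leq(\blambda,\btheta)}\le|(\blambda,\btheta)-(\bmu',\bnu')|\]
for every admissible $(\bmu',\bnu')\leq(\blambda,\btheta)$. By Lemma~\ref{central fib} this is a dimension statement about $\ol{T}{}^{(\bmu')^*,\bnu'}\cap\ol\sO_{\blambda,\btheta}\subset\Gr_G$. To establish it I would combine the standard Mirkovi\'c--Vilonen dimension estimate for intersections of semi-infinite orbits with bounded Schubert varieties in $\Gr_{\GL_{N-1}}\times\Gr_{\GL_N}$ with the $H(\bF)$-saturation description recalled just before Lemma~\ref{central fib}; the matching of constants uses the special feature of the mixed Borel of $\fgl(N-1|N)$, where every simple root is odd isotropic, so that the height $|(\blambda,\btheta)-(\bmu',\bnu')|$ is exactly twice the relevant MV weight.

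The hard part will be this central-fiber bound: the reduction via factorization and the stratum bookkeeping is formal, and \cite[Theorem 6.3.4]{sw} is used as a black box, but the inequality itself requires a genuine MV-style estimate adapted to the spherical pair $H\bsl G$. An alternative route that avoids the direct MV computation is to exploit Proposition~\ref{smooth}: smoothness of $\bp\circ\jmath$ lets one transfer the central-fiber dimension count to a calculation inside $\GL(N-1,\bO)\bsl\ol\BO_{\blambda,\btheta}$, where the required inequality follows from the standard dimension formula for $\GL(N-1,\bO)$-orbits on $\Gr_{\GL_N}$.
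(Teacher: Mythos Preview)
Your reduction is exactly the paper's: use factorization~\eqref{factor zas} and \cite[Theorem~6.3.4]{sw} to reduce to the central-fiber bound
\[
2\dim\ol\sW{}^{(\bmu',\bnu')}_{\leq(\blambda,\btheta)}\le|(\blambda,\btheta)-(\bmu',\bnu')|,
\]
then identify the central fiber via Lemma~\ref{central fib} and invoke the numerical fact that every simple root of $G=\GL_{N-1}\times\GL_N$ is a sum of two simple roots of $\fgl(N-1|N)$.

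Where the paper is more explicit: the phrase ``standard MV estimate combined with the $H(\bF)$-saturation description'' hides the actual work. Since $\ol\sO_{\blambda,\btheta}$ is an $H(\bF)$-saturation rather than a $G(\bO)$-Schubert closure, the usual MV bound does not apply directly. The paper proves a dedicated lemma (Lemma~\ref{empty}) inside the argument: (a) $T^{\blambda^*,\btheta}\cap\sO_{\blambda,\btheta}$ is a single point, and (b) $T^{\bmu'^*,\bnu'}\cap\sO_{\blambda,\btheta}=\varnothing$ unless $(\bmu',\bnu')\leq(\blambda,\btheta)$. Part~(b) is not formal; it is deduced from a limit of triple convolution fibers. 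Granting this lemma, the bound follows by the Cartier-divisor induction of \cite[Proposition~6.1.1]{sw}: each passage across the boundary of $\ol{T}{}^{\bmu^*,\bnu}$ moves by a simple $G$-root and costs at most one dimension, and by Lemma~\ref{empty} one terminates in at most $|\alpha|/2$ such steps. So your plan is correct, but you should replace ``MV estimate'' with this Cartier-divisor induction and be prepared to prove Lemma~\ref{empty}.

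Your alternative route via Proposition~\ref{smooth} does not obviously help: smoothness of $\bp\circ\jmath$ controls fibers of $\bp$, not of $\bq$, and the dimension formula for $\GL(N-1,\bO)$-orbits on $\Gr_{\GL_N}$ does not by itself bound the central $\bq$-fiber.
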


\begin{proof}
  First we prove the following

\begin{lem}
  \label{empty}
  \textup{(a)} The intersection $T^{\blambda^*,\btheta}\cap\sO_{\blambda,\btheta}$ consists of just one point.

  \textup{(b)} The intersection $T^{\bmu^{\prime*},\bnu'}\cap\sO_{\blambda,\btheta}$ is empty unless
  $(\bmu',\bnu')\leq(\blambda,\btheta)$.
\end{lem}

\begin{proof}
  (a) One can check that the point in question is the following pair of lattices
$(L'_\blambda,L_\btheta)\in\Gr_{\GL_{N-1}}\times\Gr_{\GL_N}$ (see the proof of~\cite[Lemma 2.3.2]{bft}):
\[L'_\blambda=\bO t^{\lambda_1}e_1\oplus\cdots\oplus\bO t^{\lambda_{N-1}}e_{N-1}\subset\bF\otimes\BC^{N-1},\]
\[L_\btheta=\bO t^{-\theta_1}(e_1+e_N)\oplus\cdots\oplus\bO t^{-\theta_{N-1}}(e_{N-1}+e_N)\oplus\bO
t^{-\theta_N}e_N\subset\bF\otimes\BC^N.\]

(b) According to (the proof of)~\cite[Lemma 2.3.2]{bft}, the image of the convolution of
  $\Gr^{\blambda'}_{\GL_{N-1}}$ and $\Gr^{\btheta'}_{\GL_N}$ contains $\BO_{\blambda',\btheta'}$ as a dense open
  subvariety.
  The closure of a $\GL(N-1,\bO)$-orbit $\BO_{\blambda',\btheta'}$ contains $\BO_{\blambda,\btheta}$ if and
  only if $(\blambda',\btheta')\geq(\blambda,\btheta)$. This claim for $\SO(N-1,\bO)$-orbits in
  $\Gr_{\SO_N}$ is proved in~\cite[Theorem 3.3.5(a,b)]{bft}. The proof for $\GL(N-1,\bO)$-orbits in
  $\Gr_{\GL_N}$ is absolutely similar. It follows that
  $\Gr^{\blambda^{\prime*}}_{\GL_{N-1}}\times\Gr^{\btheta'}_{\GL_N}$ does not intersect $\sO_{\blambda,\btheta}$
  unless $(\blambda',\btheta')\geq(\blambda,\btheta)$.

  More generally, we consider the following triple convolution fiber. Given signatures
  $\bxi,\bzeta$ of length $N-1$ and $\bta,\brho$ of length $N$, we consider the lattices
  $L'_\bzeta$ and $L_\bta$ as in the proof of (a) and the moduli space $\sM$ of quadruples
  $(L'_\bzeta,L'_2,L_3,L_\bta)$, where $L'_2$ (resp.\ $L_3$) is a lattice in $\bF\otimes\BC^{N-1}$
  (resp.\ in $\bF\otimes\BC^N$), and the relative position of $(L'_\bzeta,L'_2)$ is $\bxi$,
  the relative position of $(L'_2,L_3)$ is $(\blambda,\btheta)$, while the relative position
  of $(L_3,L_\bta)$ is $\brho$. Then $\sM$ is empty unless
  $(\bxi,\brho)+(\blambda,\btheta)\geq(\bzeta,\bta)$.

  On the other hand, $\sM$ is equal to the intersection
  $({}^{\bzeta^*}\!\Gr_{\GL_{N-1}}^{\bxi^*}\times{}^\bta\Gr_{\GL_N}^\brho)\cap\sO_{\blambda,\btheta}$,
  where $^{\bzeta^*}\!\Gr_{\GL_{N-1}}^{\bxi^*}$ stands for the moduli space of lattices in $\bF\otimes\BC^{N-1}$
  in relative position $\bxi^*$ with respect to $L'_{\bzeta^*}$, and $^\bta\Gr_{\GL_N}^\brho$
stands for the moduli space of lattices in $\bF\otimes\BC^N$
in relative position $\brho$ with respect to $L_\bta$.

If both $\bzeta$ and $\bxi$ tend to infinity in the cone of dominant weights of $\GL_{N-1}$ so
that the difference $\bzeta-\bxi$ is fixed to be equal to $\bmu'$, then
$^{\bzeta^*}\!\Gr_{\GL_{N-1}}^{\bxi^*}$ tends to $T^{\bmu^{\prime*}}$.
Similarly, if both $\bta$ and $\brho$ tend to infinity in the cone of dominant weights of $\GL_N$ so
that the difference $\bta-\brho$ is fixed to be equal to $\bnu'$, then
$^\bta\Gr_{\GL_N}^\brho$ tends to $T^{\bnu'}$. This proves (b).
  \end{proof}

We return to the proof of~Proposition~\ref{semismall}.
By the factorization property~\eqref{factor zas},
  $\dim\ol\CW{}^{(\bmu,\bnu)}_{\leq(\blambda,\btheta)}=\dim\ol{Z}{}^\alpha=|\alpha|$ (the second equality
  follows from~\cite[Theorem 6.3.4]{sw}), where
  $\alpha=\sum_{j=1}^{2N-2}a_j\alpha_j:=(\blambda,\btheta)-(\bmu,\bnu)$,
  and $|\alpha|:=\sum_{j=1}^{2N-2}a_j$. Again by the factorization property, it suffices to
  check $\dim\ol\sW{}^{(\bmu,\bnu)}_{\leq(\blambda,\btheta)}\leq\dim\wh\sW{}^{(\bmu,\bnu)}_{\leq(\blambda,\btheta)}
  \leq|\alpha|/2$. We repeat the argument of the
  proof of~\cite[Proposition 6.1.1]{sw} based on the fact that the boundary
  $\partial\ol{T}{}^{\bmu^*,\bnu}=\ol{T}{}^{\bmu^*,\bnu}\setminus T^{\bmu^*,\bnu}$ is a Cartier divisor
  in $\ol{T}{}^{\bmu^*,\bnu}$. We have
  $\partial\ol{T}{}^{\bmu^*,\bnu}=\bigsqcup_{(\bmu',\bnu'){\underset{G}{>}}(\bmu,\bnu)}T^{\bmu^{\prime*},\bnu'}$.
  Here $(\bmu',\bnu')\underset{G}{>}(\bmu,\bnu)$
means that $0\ne(\bmu',\bnu')-(\bmu,\bnu)$ is a nonnegative linear combination of simple
roots of $G$. These simple roots are
  the following linear combinations of $\alpha_1,\ldots,\alpha_{2N-2}$:
  \begin{multline*}
\beta_1=\alpha_1+\alpha_2,\ \beta_2=\alpha_3+\alpha_4,\ldots,\beta_{N-1}=\alpha_{2N-3}+\alpha_{2N-2};\\
\gamma_1=\alpha_2+\alpha_3,\ \gamma_2=\alpha_4+\alpha_5,\ldots,\gamma_{N-2}=\alpha_{2N-4}+\alpha_{2N-3}.
  \end{multline*}
It follows from~Lemma~\ref{empty} that
$\dim\left(\ol{T}{}^{\bmu^*,\bnu}\cap\ol\sO_{\blambda,\btheta}\right)\leq
\max\{\sum_{i=1}^{N-1}b_i+\sum_{k=1}^{N-2}c_k\}$, where the maximum is taken over the set
\[\{(\bmu',\bnu')\in X\ :\ (\bmu,\bnu)\underset{G}{\leq}(\bmu',\bnu')\leq(\blambda,\btheta)\},\]
and $(\bmu',\bnu')-(\bmu,\bnu)=\sum_{i=1}^{N-1}b_i\beta_i+\sum_{k=1}^{N-2}c_k\gamma_k$.
Now since each simple root of $G$ is a sum of {\em two} simple roots of $\GL(N-1|N)$,
the above maximum is at most $|\alpha|/2$.
\end{proof}

\subsection{Two line bundles}
\label{Two line}
Let $\CalD$ denote the determinant line bundle on $\Gr_{\GL_N}$. It carries a canonical
$\GL(N-1,\bO)$-equivariant structure that defines the same named line bundle on the quotient
stack $\GL(N-1,\bO)\backslash\Gr_{\GL_N}$.





Recall the morphism \[\bp\colon\wh\CW{}^{(\bmu,\bnu)}_{\leq(\blambda,\btheta)}\to
\GL(N-1,\bO)\backslash\ol\BO_{\blambda,\btheta}\subset\GL(N-1,\bO)\backslash\Gr_{\GL_N}\]
defined in~\S\ref{convo}.

Thus we can consider the line bundle $\bp^*\CalD$ on $\wh\CW{}^{(\bmu,\bnu)}_{\leq(\blambda,\btheta)}$.
We also have a line bundle $\bq^*\CP$ (see~\S\ref{factor bundle}) on
$\wh\CW{}^{(\bmu,\bnu)}_{\leq(\blambda,\btheta)}$.

\begin{lem}
  \label{compare line bun}
  We have a canonical isomorphism $\jmath^*\bp^*\CalD\cong\jmath^*\bq^*\CP$ of line
  bundles on $\wt\CW{}^{(\bmu,\bnu)}_{\leq(\blambda,\btheta)}$.
\end{lem}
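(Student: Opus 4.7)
The plan is to express both line bundles in terms of the same ``atomic'' tensor factors and match them canonically. For $\jmath^*\bq^*\CP$ the answer is immediate from the definition in~\S\ref{factor bundle}: its fiber over a point with divisor $D=-\sum\delta_i\Delta_i+\sum\varepsilon_iE_i$ is
\[
\bigotimes_{i=1}^{N-1}\det R\Gamma(C,\CO_C(-\Delta_i))\otimes\bigotimes_{i=1}^N\det{}\!^{-1}R\Gamma(C,\CO_C(-E_i)).
\]
The content of the lemma is to match this with $\jmath^*\bp^*\CalD$.

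First, I would exploit the crucial feature of $\wt\CW{}^{(\bmu,\bnu)}_{\leq(\blambda,\btheta)}$ that the generalized Borel structures are genuine. Thus the embeddings $\eta_i\colon\CL_i\hookrightarrow\Lambda^i\CV$ and surjections $\xi_i\colon\Lambda^i\CU\twoheadrightarrow\CK_i$ (combined with Pl\"ucker relations) provide canonical complete flags
\[
0=\CV_0\subset\CV_1\subset\ldots\subset\CV_{N-1}=\CV,\qquad
\CU=\CU^0\supset\CU^1\supset\ldots\supset\CU^{N-1}\supset\CU^N=0,
\]
with $\det\CV_i=\CL_i=\CO_C(-\Delta_1-\ldots-\Delta_i)$ and $\det(\CU/\CU^i)=\CK_i=\CO_C(-E_1-\ldots-E_i)$. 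Consequently the successive quotients are canonically
\[
\CV_i/\CV_{i-1}\cong\CL_i\otimes\CL_{i-1}^{-1}=\CO_C(-\Delta_i),\qquad
\CU^{i-1}/\CU^i\cong\CK_i\otimes\CK_{i-1}^{-1}=\CO_C(-E_i)
\]
(with $\CL_0=\CK_0=\CO_C$). Multiplicativity of $\det R\Gamma$ in short exact sequences then gives the canonical identifications
\[
\det R\Gamma(C,\CV)\cong\bigotimes_{i=1}^{N-1}\det R\Gamma(C,\CO_C(-\Delta_i)),\quad
\det R\Gamma(C,\CU)\cong\bigotimes_{i=1}^{N}\det R\Gamma(C,\CO_C(-E_i)).
\]

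Second, I would use the standard description of the determinant line bundle on the Hecke stack classifying triples $(\CV,\CU,\sigma)$: namely $\CalD$ has fiber canonically isomorphic to $\det R\Gamma(C,\CV\oplus\CO_C)\otimes\det{}\!^{-1}R\Gamma(C,\CU)$ (this is how $\CalD$ is transported to the global stack from the relative-determinant definition on $\Gr_{\GL_N}$, via the interpretation of $\GL(N-1,\bO)\backslash\Gr_{\GL_N}$ as a moduli stack of Hecke modifications). Plugging in the two product formulas above, the $\det R\Gamma(C,\CO_C)$ factor is absolutely constant over $\wt\CW$ and can be absorbed, and comparing with the formula for $\CP_D$ recalled above yields the desired canonical identification $\jmath^*\bp^*\CalD\cong\jmath^*\bq^*\CP$.

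The only potentially tricky step is checking that the identifications are truly canonical and compatible as one varies the point of $\wt\CW$: this reduces to the compatibility of the Pl\"ucker data $(\eta_i)$ and $(\xi_i)$ with the identifications $\CV_i/\CV_{i-1}\cong\CO_C(-\Delta_i)$ and $\CU^{i-1}/\CU^i\cong\CO_C(-E_i)$, which is exactly what the (genuine) Pl\"ucker relations on $\wt\CW$ guarantee. A convention-dependent sign choice of $\CalD$ (whether one uses $\CalD$ or $\CalD^{-1}$ on the Hecke stack) is harmless, since the product formulas above combine to the formula defining $\CP$ up to an inversion that is already built into the conventions of~\S\ref{factor bundle}.
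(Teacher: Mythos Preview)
Your proposal is correct and follows exactly the approach the paper has in mind; the paper's own proof is a two-sentence sketch that states $\CalD_{(\CV,\CU,\sigma)}=\det R\Gamma(C,\CV)\otimes\det^{-1}R\Gamma(C,\CU)$ and says ``now compare with the definition of $\CP$,'' and your argument is precisely that comparison spelled out via the genuine flags on $\wt\CW$ and multiplicativity of $\det R\Gamma$. The extra $\det R\Gamma(C,\CO_C)$ factor you flag is indeed a harmless constant and the paper simply drops it.
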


\begin{proof}
  The fiber of $\CalD$ at a point $(\CV,\CU,\sigma)\in\GL(N-1,\bO)\backslash\Gr_{\GL_N}$ is
  equal to $\det R\Gamma(C,\CV)\otimes\det R\Gamma(C,\CO_C)\otimes\det^{-1}R\Gamma(C,\CU)$.
  Now compare with the definition of the line bundle $\CP$ in~\S\ref{factor bundle}.
\end{proof}

Now let $Y$ be a top-dimensional irreducible component of $\sW^{(\bmu,\bnu)}_{\leq(\blambda,\btheta)}$
(that is $\dim Y=\frac12|(\blambda,\btheta)-(\bmu,\bnu)|$),
and let $\ol{Y}$ denote its closure in $\ol\sW{}^{(\bmu,\bnu)}_{\leq(\blambda,\btheta)}$. Then the
boundary $\partial\ol{Y}=\ol{Y}\setminus Y$ is a Cartier divisor in $\ol{Y}$, a union of irreducible
Weil divisors $\partial\ol{Y}=\bigcup_{i=1}^n\partial_i\ol{Y}$. According to~Lemma~\ref{compare line bun},
the line bundle $\bp^*\CalD$ restricted to $\ol\sW{}^{(\bmu,\bnu)}_{\leq(\blambda,\btheta)}$ has a canonical
(up to scalar multiplication) nonvanishing section $s$ on the open subscheme
$\sW^{(\bmu,\bnu)}_{\leq(\blambda,\btheta)}\subset\ol\sW{}^{(\bmu,\bnu)}_{\leq(\blambda,\btheta)}$.

\begin{lem}
  \label{pole}
  There is $i$ such that $s|_Y$ viewed as a rational section on $\ol{Y}$,
  has a zero or pole at $\partial_i\ol{Y}$.
\end{lem}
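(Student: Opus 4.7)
The claim is equivalent to showing that $\bp^*\CalD|_{\ol Y}$ is nontrivial as an abstract line bundle. Indeed, $\ol Y$ is an irreducible projective variety, being a closed subscheme of the proper central fiber $\ol\sW{}^{(\bmu,\bnu)}_{\leq(\blambda,\btheta)}$, so $H^0(\ol Y,\CO_{\ol Y})=\BC$. Consequently, if $\bp^*\CalD|_{\ol Y}$ were trivial then $s|_Y$, being regular and nowhere-vanishing on the dense open $Y$, would necessarily be a nonzero constant, extending to a nowhere-vanishing section on all of $\ol Y$ and contradicting the conclusion.

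To establish nontriviality, we use equivariance. Let $T\subset G=\GL_{N-1}\x\GL_N$ be the diagonal maximal torus, and $\BG_m^{\on{rot}}$ loop rotation fixing $c\in C$. Both $T$ and $\BG_m^{\on{rot}}$ act on $\Gr_G$, preserve the SW zastava spaces of \S\ref{convo}, and (being connected) stabilize each irreducible component of the central fiber; in particular $\ol Y$ is $T\x\BG_m^{\on{rot}}$-invariant. The determinant line bundle $\CalD$ carries a canonical $T\x\BG_m^{\on{rot}}$-equivariant structure, inherited by $\bp^*\CalD$. Since any trivial equivariant line bundle has the same weight at every fixed point (up to an overall shift corresponding to the choice of equivariant structure), it suffices to exhibit two $T\x\BG_m^{\on{rot}}$-fixed points of $\ol Y$ at which the weights of $\bp^*\CalD$ differ.

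By Lemma~\ref{central fib} the $T$-fixed points of $\wh\sW{}^{(\bmu,\bnu)}_{\leq(\blambda,\btheta)}$ are the base points $t^{\bmu^{\prime*},\bnu'}$ with $(\bmu,\bnu)\leq_G(\bmu',\bnu')\leq(\blambda,\btheta)$. Using Lemma~\ref{compare line bun} to identify $\bp^*\CalD$ with $\bq^*\CP$ on the smooth locus and reading off~\eqref{fibers P}, the $\BG_m^{\on{rot}}$-weight of $\bp^*\CalD$ at $t^{\bmu^{\prime*},\bnu'}$ equals, up to a common shift,
\[w(\bmu',\bnu')=\sum_{i=1}^N\frac{\nu'_i(\nu'_i-1)}{2}-\sum_{i=1}^{N-1}\frac{\mu'_i(\mu'_i+1)}{2},\]
which is a genuinely nonconstant quadratic function under translations by $G$-roots (two distinct $(\bmu',\bnu')$ in the same $W$-free chamber give distinct values of $w$).

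Finally, since $(\bmu,\bnu)<(\blambda,\btheta)$ implies $\dim\ol Y=|\alpha|/2>0$, contracting via a generic one-parameter subgroup of $T\x\BG_m^{\on{rot}}$ produces at least two distinct $T\x\BG_m^{\on{rot}}$-fixed points on the positive-dimensional proper variety $\ol Y$. The \emph{main difficulty} is ensuring these two fixed points lie in strata with distinct coordinates $(\bmu',\bnu')$, rather than coinciding within one stratum. This reduces via the factorization isomorphism~\eqref{factor zas} to the minimal cases $\alpha=2\alpha_j$ and $\alpha=\alpha_i+\alpha_j$, where an explicit analysis of the semiinfinite intersections $\ol{T}{}^{\bmu^*,\bnu}\cap\ol\sO_{\blambda,\btheta}$ exhibits the required pair of fixed points with differing $w$-weights. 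This completes the argument.
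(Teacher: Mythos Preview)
Your approach is genuinely different from the paper's and is potentially salvageable, but the proof as written has a real gap in its final step. The paper argues directly: it observes that under the embedding $\ol Y\subset\Gr_{\GL_{N-1}}\times\Gr_{\GL_N}$ one has $\bp^*\CalD|_{\ol Y}=\CalD_N\otimes\CalD_{N-1}^{-1}$, and that $s$ is the ratio of the canonical trivializations of $\CalD_N$ on $T^{\bnu}$ and of $\CalD_{N-1}$ on $T^{\bmu^*}$. Boundary components $\partial_i\ol Y$ come in three types according to whether they lie in the boundary of the $\GL_{N-1}$ semiinfinite orbit, of the $\GL_N$ one, or of both; a dimension count using the semismallness of Proposition~\ref{semismall} rules out the third type, and then $s$ must acquire a pole at some component of the first type and a zero at some component of the second type, with no possible cancellation.

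Your equivariant-weight strategy, by contrast, stumbles at the end. First, the ``main difficulty'' you flag does not exist: distinct $T$-fixed points in $\ol Y\subset\Gr_G$ are exactly distinct lattice points $t^{\bmu'^*,\bnu'}$, so they automatically carry distinct coordinates $(\bmu',\bnu')$. Second, and more seriously, the $\BG_m^{\on{rot}}$-weight $w(\bmu',\bnu')$ you compute via~\eqref{fibers P} is a non-injective quadratic function, so it need not separate two such fixed points; your appeal to factorization to handle this cannot work, since the central fiber over a single point does not factorize. The fix is to use the $T_G=T_{N-1}\times T_N$-weight instead: since $T_N$ centralizes $\GL_{N-1}$ it preserves each $\GL(N-1,\bO)$-orbit, so $T_G$ acts on $\ol Y$, and the $T_G$-weight of $\CalD_N\otimes\CalD_{N-1}^{-1}$ at $t^{\bmu'^*,\bnu'}$ is a \emph{linear} injective function of $(\bmu'^*,\bnu')$. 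With that correction (and a little care about normality in your opening reduction), your argument goes through, though it yields less than the paper's, which locates both a zero and a pole.
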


\begin{proof}
  Recall that
  $\ol{Y}\subset\ol\sW{}^{(\bmu,\bnu)}_{\leq(\blambda,\btheta)}\subset\Gr_G=\Gr_{\GL_{N-1}}\times\Gr_{\GL_N}$.
  Accordingly, the boundary components $\partial_i\ol{Y}$ can be of the following 3 types:

  (a) $\partial_i\ol{Y}$ is an irreducible component of the intersection
  $\ol{T}{}^{\bmu^*+\gamma_k,\bnu}\cap\ol\sO_{\blambda,\btheta}$ for some simple root $\gamma_k$ of
  $\GL_{N-1},\ 1\leq k\leq N-2$, but {\em not} an irreducible component of the intersection
  $\ol{T}{}^{\bmu^*,\bnu+\beta_l}\cap\ol\sO_{\blambda,\btheta}$ for any simple root $\beta_l$ of
  $\GL_N,\ 1\leq l\leq N-1$.

  (b) $\partial_i\ol{Y}$ is an irreducible component of the intersection
  $\ol{T}{}^{\bmu^*,\bnu+\beta_l}\cap\ol\sO_{\blambda,\btheta}$ for some simple root $\beta_l$ of
  $\GL_N,\ 1\leq l\leq N-1$, but {\em not} an irreducible component of the intersection
$\ol{T}{}^{\bmu^*+\gamma_k,\bnu}\cap\ol\sO_{\blambda,\btheta}$ for any simple root $\gamma_k$ of
  $\GL_{N-1},\ 1\leq k\leq N-2$.

  (c) $\partial_i\ol{Y}$ is an irreducible component of the intersection
  $\ol{T}{}^{\bmu^*+\gamma_k,\bnu}\cap\ol\sO_{\blambda,\btheta}$ for some simple root $\gamma_k$ of
  $\GL_{N-1},\ 1\leq k\leq N-2$, and also {\em is} an irreducible component of the intersection
  $\ol{T}{}^{\bmu^*,\bnu+\beta_l}\cap\ol\sO_{\blambda,\btheta}$ for some simple root $\beta_l$ of
  $\GL_N,\ 1\leq l\leq N-1$.

  Actually, the case (c) never happens. Otherwise, $\partial_i\ol{Y}$ is an irreducible
  component of the intersection $\ol{T}{}^{\bmu^*+\gamma_k,\bnu+\beta_l}\cap\ol\sO_{\blambda,\btheta}$.
  This intersection has dimension at most $\frac12|(\blambda,\btheta)-(\bmu,\bnu)|-2$ by
  the proof of~Proposition~\ref{semismall} (since $|\gamma_k|=|\beta_l|=2$). On the other
  hand, $\dim\partial_i\ol{Y}=\dim\ol{Y}-1=\frac12|(\blambda,\btheta)-(\bmu,\bnu)|-1$: a
  contradiction.

  Now the line bundle $\bp^*\CalD$ restricted to $\ol{Y}$, by definition, is the ratio of
  the very ample determinant line bundle $\CalD_N$ on $\Gr_{\GL_N}$ and the very ample
  determinant line bundle $\CalD_{N-1}$ on $\Gr_{\GL_{N-1}}$ (we view
  $\ol{Y}\subset\ol\sW{}^{(\bmu,\bnu)}_{\leq(\blambda,\btheta)}$ as a closed subscheme of
  $\Gr_{\GL_{N-1}}\times\Gr_{\GL_N}$). The section $s|_Y$ is the ratio of the trivialization
  of $\CalD_N$ on $T^\bnu$ and the trivialization of $\CalD_{N-1}$ on $T^{\bmu^*}$.
  Hence the section $s|_Y$ viewed as a rational section
  on $\ol{Y}$ must have a pole at some boundary component of type~(a) (unless $N=2$, and there
  are no type~(a) components whatsoever), and also a zero at some boundary component of type~(b).
  We already know that there are no cancellations between components of types~(a) and~(b) (that
  is, there are no components of type~(c)). This completes the proof of the lemma.
\end{proof}

\subsection{Multiple marked points}
As in the end of~\S\ref{factor shv}, one can also allow the marked point $c$ to vary in $C$;
moreover, one can allow $n$ distinct marked points to vary in $\oC^n$.
One obtains the SW zastava spaces with poles at the marked points, e.g.\
$\bq\colon \ol\CW{}^{(\bmu,\bnu)}_{\leq(\blambda^{(1)},\btheta^{(1)}),\ldots,(\blambda^{(n)},\btheta^{(n)})}\to
C^{(\bmu,\bnu)}_{\leq(\blambda^{(1)},\btheta^{(1)}),\ldots,(\blambda^{(n)},\btheta^{(n)})}\times\oC^n$.
The definition is similar to~Definition~\ref{olW} and is left to the reader.
The obvious analogues of the results of~\S\S\ref{Factor zas}--\ref{Two line} hold true with
similar proofs, e.g.\ the morphism $\bq$ above is semismall.

\section{The Gaiotto category and the functor to factorizable sheaves}

    \subsection{Classification of irreducible monodromic perverse sheaves}
Recall that $\CalD$ denotes the determinant line bundle on $\Gr_{\GL_N}$, and let $\bCD$ denote
    the punctured total space of $\CalD$. We consider the equivariant derived constructible category
    $SD^b_{\GL(N-1,\bO),q}(\bCD)$ of sheaves of super vector spaces on $\bCD$, monodromic
    with monodromy $q$. Recall that $q$ is assumed to be a transcendental complex number.
    We also consider the abelian
    category $S\Perv_{\GL(N-1,\bO),q}(\bCD)$ of perverse sheaves of super vector spaces
    on $\bCD$, monodromic with monodromy $q$.

    Recall that the $\GL(N-1,\bO)$-orbits on $\Gr_{\GL_N}$ are numbered by pairs of signatures
    $(\blambda,\btheta)$ such that the length of $\blambda$ (resp.\ $\btheta$) is $N-1$ (resp.\ $N$).
    We first address the question when the closure of an orbit $\BO_{\blambda,\btheta}$ supports an irreducible
    $\GL(N-1,\bO)$-equivariant $q$-monodromic perverse sheaf.

    \begin{prop}
      \label{supports}
      The closure of an orbit $\BO_{\blambda,\btheta}$ supports an irreducible
      $\GL(N-1,\bO)$-equivariant $q$-monodromic perverse sheaf iff $(\blambda,\btheta)$ satisfies the
      condition~\eqref{serg}.
    \end{prop}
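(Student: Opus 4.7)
The plan is to reduce the statement to a computation of the stabilizer of a point of the orbit together with the character by which this stabilizer acts on the fiber of the determinant line bundle $\CalD$. An irreducible $\GL(N-1,\bO)$-equivariant $q$-monodromic perverse sheaf supported on $\ol\BO_{\blambda,\btheta}$ is the same datum as an irreducible equivariant local system on the $\Gm$-torsor $\bCD|_{\BO_{\blambda,\btheta}}\to\BO_{\blambda,\btheta}$ whose monodromy around the $\Gm$-fibers is $q$. Fixing a base point $x\in\BO_{\blambda,\btheta}$ with stabilizer $\Stab_x\subset\GL(N-1,\bO)$, let $\chi_x\colon\Stab_x\to\Gm$ be the character encoding the action of $\Stab_x$ on the one-dimensional fiber $\CalD_x$. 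Such an equivariant local system exists if and only if the restriction $\chi_x|_{\Stab_x^\circ}$ to the identity component is trivial: since $q$ is transcendental, any nontrivial algebraic character of $\Stab_x^\circ$ would force monodromy by a nontrivial integer power of $q$ around loops coming from $\Stab_x^\circ$ inside $\BO_{\blambda,\btheta}$, which cannot be absorbed into a local system whose monodromy is only along the $\Gm$-fibers.

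I would then pick the concrete representative suggested by Lemma~\ref{empty}(a) (the lattice $L_\btheta$, suitably combined with $L'_\blambda$ to land in $\BO_{\blambda,\btheta}\subset\Gr_{\GL_N}$) and compute $\Stab_x$ explicitly. Modulo its pro-unipotent radical, this stabilizer is a product $\prod_B\GL_{n_B}$ of general linear groups indexed by maximal blocks $B$ of the pair $(\blambda,\btheta)$, where $\theta_i=\theta_{i+1}$ fuses the $\btheta$-side and $\lambda_{i-1}=\lambda_i$ fuses the $\blambda$-side into a single $\GL$-factor, precisely matching the two clauses of~\eqref{serg}. The action of each factor $\GL_{n_B}$ on $\CalD_x$ is a power of the determinant character, with exponent that can be read off from the definition of $\CalD$ as the determinant of the action on a quotient of lattices: a direct computation shows that this exponent equals $\theta_i+\lambda_i$ for any index $i$ producing the block $B$. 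Consequently $\chi_x|_{\Stab_x^\circ}$ is trivial if and only if $\theta_i+\lambda_i=0$ whenever a reductive block is produced, i.e.\ exactly when \eqref{serg} holds.

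The ``only if'' direction then follows by contraposition, and for the ``if'' direction, when \eqref{serg} holds the $\Gm$-torsor $\bCD|_{\BO_{\blambda,\btheta}}$ carries an obvious $\GL(N-1,\bO)$-equivariant rank-one local system with monodromy $q$, whose Goresky-MacPherson extension to $\bCD|_{\ol\BO_{\blambda,\btheta}}$ provides the required irreducible perverse sheaf. The main obstacle is the explicit stabilizer-and-character computation: identifying the reductive blocks and showing that the exponent of the determinant character on each block is exactly $\theta_i+\lambda_i$. This is a concrete but somewhat intricate calculation with lattices and $\GL(N-1,\bO)$-orbits in $\Gr_{\GL_N}$, analogous to well-known stabilizer computations on partial affine flag varieties; once carried out it produces~\eqref{serg} directly, with no further input needed.
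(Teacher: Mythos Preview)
Your proposal is correct and follows essentially the same route as the paper. Both reduce the question to whether the stabilizer of a chosen base point in $\BO_{\blambda,\btheta}$ acts trivially on the fiber $\CalD_x$, identify the reductive quotient of that stabilizer as a product of $\GL$-factors indexed by blocks, and show the character on each factor is $\det^{\theta_i+\lambda_i}$. The only cosmetic difference is that the paper uses the explicit lattice $L_{\blambda,\btheta}$ from~\cite[Lemma~2.3.2]{bft} rather than the point from Lemma~\ref{empty}(a), and invokes~\cite[Lemma~2.3.3]{bft} for the stabilizer computation (with block size $m=\lfloor n/2\rfloor$ where $n$ counts consecutive equalities in the alternating sequence $\ldots,\lambda_{i-1}+\theta_i,\theta_i+\lambda_i,\lambda_i+\theta_{i+1},\ldots$), which is exactly the computation you flag as ``the main obstacle''.
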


    \begin{proof}
      Recall the point $L_{\blambda,\btheta}\in\BO_{\blambda,\btheta}$ (see the proof
      of~\cite[Lemma~2.3.2]{bft}):
      \[L_{\blambda,\btheta}=\bO(t^{-\lambda_1-\theta_1}e_1+t^{-\theta_1}e_N)\oplus\ldots\oplus
      \bO(t^{-\lambda_{N-1}-\theta_{N-1}}e_{N-1}+t^{-\theta_{N-1}}e_N)\oplus\bO t^{-\theta_N}e_N.\]
      The closure of the orbit $\BO_{\blambda,\btheta}$ supports an irreducible
      $\GL(N-1,\bO)$-equivariant $q$-monodromic perverse sheaf iff the stabilizer of
$L_{\blambda,\btheta}$ in $\GL(N-1,\bO)$ acts trivially in the fiber of $\CalD$ over $L_{\blambda,\btheta}$.
This stabilizer (more precisely, its reductive part) is computed in the proof
of~\cite[Lemma~2.3.3]{bft}.

Assume for example that $\lambda_{i-1}+\theta_i>\theta_i+\lambda_i=\lambda_i+\theta_{i+1}=\ldots
=\lambda_{j-1}+\theta_j>\theta_j+\lambda_j$, the value of the sums in the middle (equal to each other)
is $a$, and the number of such sums is $n\geq 2$. We set $m:=\lfloor\frac{n}{2}\rfloor$.
Then the reductive part of $\on{Stab}_{\GL(N-1,\bO)}(L_{\blambda,\btheta})$ has a factor $\GL_m$.
One can check that the character of the action of $\on{Stab}_{\GL(N-1,\bO)}(L_{\blambda,\btheta})$
on the fiber $\CalD_{L_{\blambda,\btheta}}$ restricted to the factor $\GL_m$ equals $\det^a$.
    \end{proof}

\begin{defn}
\label{typic}
\textup{(a)} We say that a bisignature $(\blambda,\btheta)$ satisfying the condition~\eqref{serg}
is {\em relevant}. The corresponding orbit $\BO_{\blambda,\btheta}$ will be called {\em relevant} as well.

\textup{(b)} We say that a bisignature $(\blambda,\btheta)$ is {\em typical} if it satisfies
the following condition: $\lambda_i+\theta_j\ne0$ for any $i,j$.
\end{defn}

\subsection{The functor $F$}
Recall the setup of~\S\ref{Factor zas}. For $(\blambda',\btheta')\geq(\blambda,\btheta)$
we have an evident closed embedding
$\ol\CW{}^{(\bmu,\bnu)}_{\leq(\blambda,\btheta)}\hookrightarrow\ol\CW{}^{(\bmu,\bnu)}_{\leq(\blambda',\btheta')}$
compatible with the closed embedding
$C^{(\bmu,\bnu)}_{\leq(\blambda,\btheta)}\hookrightarrow C^{(\bmu,\bnu)}_{\leq(\blambda',\btheta')}$.

Given $\CM\in SD^b_{\GL(N-1,\bO),q}(\bCD)$ supported at $\ol\BO_{\blambda,\btheta}$ we define a
factorizable complex $\CF=(\CF^{(\bmu,\bnu)})=F(\CM)\in D(\on{FS})$ as follows.
Due to~Lemma~\ref{compare line bun}, for any $(\bmu,\bnu)\in X,\ \jmath^*\bp^\circ\CM$
(see~Definition~\ref{pcirc}) is a
$q$-monodromic complex on the punctured line bundle $\jmath^*\bq^*\bCP$ on
$\CW^{(\bmu,\bnu)}_{\leq(\blambda,\btheta)}$. Hence $\jmath_!\jmath^*\bp^\circ\CM$ is a $q$-monodromic
complex on the punctured line bundle $\bq^*\bCP$ on $\ol\CW{}^{(\bmu,\bnu)}_{\leq(\blambda,\btheta)}$,
and $\bq_*\jmath_!\jmath^*\bp^\circ\CM$ is a $q$-monodromic complex on the punctured line bundle
$\bCP$ on $C^{(\bmu,\bnu)}_{\leq(\blambda,\btheta)}$. We set
$\CF^{(\bmu,\bnu)}=\bq_*\jmath_!\jmath^*\bp^\circ\CM$. Due to the previous paragraph,
$\CF^{(\bmu,\bnu)}$ is well defined (is independent of the choice of $(\blambda,\btheta)$
such that $\ol\BO_{\blambda,\btheta}$ contains the support of $\CM$).

We will show that if $\CM$ is perverse, then all $\CF^{(\bmu,\bnu)}$ are perverse as well, and they
form a factorizable sheaf. We start with the following cleanness result.

\begin{lem}
  \label{clean}
  The natural morphism $\jmath_!\jmath^*\bp^\circ\CM\to\jmath_*\jmath^*\bp^\circ\CM$ of $q$-monodromic
  complexes on the punctured line bundle $\bq^*\bCP$ on $\ol\CW{}^{(\bmu,\bnu)}_{\leq(\blambda,\btheta)}$
  is an isomorphism.
\end{lem}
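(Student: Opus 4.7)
The cleanness statement is local on $\ol\CW{}^{(\bmu,\bnu)}_{\leq(\blambda,\btheta)}$, so my plan is to verify that at a generic point of each top-dimensional irreducible component $D$ of the boundary $\ol\CW{}^{(\bmu,\bnu)}_{\leq(\blambda,\btheta)}\setminus\CW^{(\bmu,\bnu)}_{\leq(\blambda,\btheta)}$, the $q$-monodromic sheaf $\jmath^*\bp^\circ\CM$ has nontrivial monodromy transverse to $D$ inside $\bq^*\bCP$. Standard cleanness for monodromic sheaves with nontrivial transverse monodromy then yields $\jmath_!\jmath^*\bp^\circ\CM\iso\jmath_*\jmath^*\bp^\circ\CM$.

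The key local computation is this. Fix a boundary divisor $D$ with local transverse coordinate $t$. By \lemref{compare line bun}, the canonical section $s$ trivializing $\bp^*\CalD\otimes(\bq^*\CP)^{-1}$ on $\CW$ becomes, in local trivializations of $\bp^*\CalD$ and $\bq^*\CP$ near $D$, a function of the form $s=t^{n_D}\cdot u$ for some integer $n_D$ and some local unit $u$. The induced rational isomorphism $\bp^*\bCD\dashrightarrow\bq^*\bCP$ is fiberwise multiplication by $t^{n_D}$, and a direct loop-monodromy computation shows that the monodromy of $\jmath^*\bp^\circ\CM$ transverse to $D$ inside $\bq^*\bCP$ equals $q^{-n_D}$ times the monodromy of $\bp^\circ\CM$ transverse to $D$ inside $\bp^*\bCD$. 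Since $\bp|_{\wt\CW}$ is smooth (\propref{smooth}) and $\bp^\circ\CM$ is a shifted pullback from the Hecke stack, the latter monodromy is controlled by $\CM$ alone and is in particular trivial when $\bp(D)$ lies in a single orbit.

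For each top-dimensional $D$ I would then verify $n_D\neq 0$. By the factorization property \eqref{factor zas}, components $D$ away from the central fiber $\bq^{-1}((\bmu,\bnu)\cdot c)$ correspond locally to diagonal boundary strata of a classical zastava $\ol Z^\beta$; for these, the explicit formula \eqref{fibers P} for the fibers of $\CP$ forces $n_D\neq 0$ (the same calculation underlies the Kummer-type cleanness of $\CI^\beta$ across the big diagonal of $C^\beta$). For components $D$ meeting the central fiber, \lemref{pole} produces at least one boundary component of each top-dimensional $\ol Y\subset\ol\sW{}^{(\bmu,\bnu)}_{\leq(\blambda,\btheta)}$ on which $n_D\neq 0$. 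Combined with $q$ transcendental, this yields $q^{-n_D}\neq 1$ and hence cleanness transverse to these $D$.

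The main obstacle is that \lemref{pole} only guarantees \emph{at least one} such component per top-dimensional $\ol Y$, not all of them. Ensuring that every boundary divisor along which cleanness could potentially fail actually has $n_D\neq 0$ requires a careful stratification of $\ol\CW{}^{(\bmu,\bnu)}_{\leq(\blambda,\btheta)}\setminus\CW^{(\bmu,\bnu)}_{\leq(\blambda,\btheta)}$ combined with the factorization structure and induction on $|\alpha|=|(\blambda,\btheta)-(\bmu,\bnu)|$. Remaining boundary components with $n_D=0$ must be shown to contribute no obstruction---either because they are of higher codimension in $\ol\CW{}^{(\bmu,\bnu)}_{\leq(\blambda,\btheta)}$, or because $\bp^\circ\CM$ is automatically clean across them by smoothness of $\bp|_{\wt\CW}$ and perversity of $\CM$.
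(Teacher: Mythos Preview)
Your underlying mechanism---nontrivial transverse monodromy in the $\BG_m$-direction of $\bq^*\bCP$ forces $\jmath_!=\jmath_*$---is the right one, and the paper's proof (which simply cites \cite[Theorem~7.3]{g}) runs on exactly this principle. But your execution goes off course at the point where you split the boundary into ``away from the central fiber'' and ``meeting the central fiber'' and invoke \lemref{pole} for the latter.

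First, \lemref{pole} is not the right tool here. In the paper it is used \emph{after} cleanness is already established, in the proof of \lemref{irreduc}, to show that the top compactly supported cohomology of the central fiber vanishes (hence irreducibility of the pushforward). It concerns the rational section $s$ restricted to a top-dimensional component of the \emph{central fiber} $\ol\sW$, not the behavior of $\bq^*\CP$ versus $\bp^*\CalD$ across boundary divisors of the ambient $\ol\CW$. That is why you only get ``at least one'' component: the lemma was never designed to enumerate boundary divisors of $\ol\CW$.

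Second, your case split is unnecessary. Cleanness is local, and a generic point of any irreducible boundary divisor of $\ol\CW{}^{(\bmu,\bnu)}_{\leq(\blambda,\btheta)}$ lies over a configuration whose defect (the locus where $\eta_\bullet$ or $\xi_\bullet$ degenerates) is disjoint from $c$. Factorization~\eqref{factor zas} therefore reduces the entire statement to the case of the compactified zastava $\ol Z{}^\beta$ with no pole. There the boundary strata are indexed by the simple-root degenerations of the $B$-structures, and the order $n_D$ you want is read off directly from the quadratic form in~\eqref{fibers P}; since every simple root is odd isotropic one finds $n_D=\pm1$ on each codimension-one stratum, so $q^{-n_D}\neq1$ for $q$ not a root of unity and the clean extension follows by induction on strata. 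This is precisely the computation carried out in~\cite[Theorem~7.3]{g}, and it needs no separate treatment of the central fiber, no appeal to \lemref{pole}, and leaves no ``at least one versus all'' gap.
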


\begin{proof}
The argument is the same as the proof of~\cite[Theorem 7.3]{g}. Namely, the argument
in~\cite[\S\S7.8,7.10]{g} 
reduces the cleanness property in question to the cleanness property~\cite[Theorem 7.6]{g}
of $\ol\Bun_{B^-}(C)$, where $B^-=B_{N-1}^-\times B_N^-\subset G=\GL_{N-1}\times\GL_N$.
\end{proof}

\begin{lem}
  \label{irreduc}
  For $\CM=\IC_{\blambda,\btheta}^q$ the corresponding $q$-monodromic complex $\CF^{(\bmu,\bnu)}$ is
  an irreducible perverse sheaf for any $(\bmu,\bnu)\leq(\blambda,\btheta)$.
\end{lem}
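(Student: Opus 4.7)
The plan is to combine three properties of the correspondence: the smoothness of $\bp\circ\jmath$ on the open part (Proposition~\ref{smooth}), the cleanness Lemma~\ref{clean}, and the semismallness of $\bq$ (Proposition~\ref{semismall}). First, since $\bp\circ\jmath\colon\CW^{(\bmu,\bnu)}_{\leq(\blambda,\btheta)}\to\GL(N-1,\bO)\backslash\ol\BO_{\blambda,\btheta}$ is smooth with target the support of $\IC^q_{\blambda,\btheta}$, the pullback $\jmath^*\bp^\circ\IC^q_{\blambda,\btheta}$ is (up to the shift built into $\bp^\circ$) the smooth pullback of an IC sheaf on a smooth irreducible scheme, hence an irreducible perverse sheaf; concretely it is the intermediate extension of a one-dimensional $q$-monodromic local system from a dense open subset of $\CW^{(\bmu,\bnu)}_{\leq(\blambda,\btheta)}$. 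By Lemma~\ref{clean}, the functor $\jmath_!$ agrees with $\jmath_{!*}$ on this sheaf, so $\jmath_!\jmath^*\bp^\circ\IC^q_{\blambda,\btheta}$ is an irreducible perverse sheaf on $\ol\CW{}^{(\bmu,\bnu)}_{\leq(\blambda,\btheta)}$.

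Next I would apply $\bq_*$. Since $\ol\CW{}^{(\bmu,\bnu)}_{\leq(\blambda,\btheta)}$ is irreducible (as the closure in $\wh\CW$ of an open subset of the irreducible variety $\ol Z{}^\alpha$, using \cite[Lemma~6.2.1]{sw}), and $\bq$ is stratified semismall by Proposition~\ref{semismall}, the BBD decomposition theorem for semismall morphisms gives a decomposition of $\bq_*\jmath_!\jmath^*\bp^\circ\IC^q_{\blambda,\btheta}$ into a direct sum of IC sheaves on closed strata of $C^{(\bmu,\bnu)}_{\leq(\blambda,\btheta)}\simeq C^\alpha$. One always-present summand, the principal one, is the IC extension from the generic stratum $\oC^\alpha$: by the SW birationality statement and the factorization property~\eqref{factor zas}, $\bq$ restricts over $\oC^\alpha$ to an isomorphism onto a dense open subset, so this principal summand is an irreducible perverse sheaf (an IC extension of a one-dimensional $q$-monodromic local system).

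The hard part is ruling out any further summand supported on a smaller stratum $S\subset C^\alpha$. By the factorization property~\eqref{factor zas}, it suffices to verify this at the deepest stratum corresponding to the central point $(\bmu,\bnu)\cdot c$, i.e.\ to show that the contribution from the top-dimensional irreducible components $\bar Y$ of the central fiber $\ol\sW{}^{(\bmu,\bnu)}_{\leq(\blambda,\btheta)}$ vanishes. Here I would invoke Lemma~\ref{pole} combined with Lemma~\ref{compare line bun}: on the open part $Y=\bar Y\cap\sW^{(\bmu,\bnu)}_{\leq(\blambda,\btheta)}$ the canonical section $s$ of $\bp^*\CalD\cong\bq^*\CP$ viewed as a rational section on $\bar Y$ has an honest zero or pole along some boundary divisor $\partial_i\bar Y$. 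Since our sheaf is $q$-monodromic along the fibers of the $\GG_m$-bundle and $q$ is transcendental, this translates into the $q$-monodromic local system on $Y$ having nontrivial monodromy (of the form $q^k$ with $k\neq 0$) around $\partial_i\bar Y$. Such a local system has vanishing intermediate extension cohomology along the normal direction to $\partial_i\bar Y$, so the top cohomology of $\bar Y$ with coefficients in our intermediate extension vanishes. This kills the potential additional summand.

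Putting these pieces together, the decomposition of $\bq_*\jmath_!\jmath^*\bp^\circ\IC^q_{\blambda,\btheta}$ reduces to the single principal summand, and $\CF^{(\bmu,\bnu)}$ is an irreducible perverse sheaf on $\bCP^{(\bmu,\bnu)}$ as claimed. The main obstacle, as anticipated above, is the vanishing argument against additional summands; this is exactly the role of the two-line-bundle comparison and Lemma~\ref{pole}, which translates a geometric pole/zero statement into a nontriviality of $q$-monodromy incompatible with intermediate extension.
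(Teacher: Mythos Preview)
Your proposal is correct and follows essentially the same route as the paper: smoothness of $\bp\circ\jmath$ plus cleanness gives an irreducible perverse sheaf upstairs, semismallness of $\bq$ reduces to ruling out extra summands supported on deeper strata, factorization reduces this to the central fiber, and Lemma~\ref{pole} together with $q$ having infinite order forces nontrivial monodromy on each top-dimensional component, killing the top compactly supported cohomology. Your phrasing of the vanishing step via ``intermediate extension cohomology along the normal direction'' is a slight detour compared to the paper's more direct observation that nontrivial monodromy on an open part of each top-dimensional component $Y$ already kills $H^{2\dim Y}_c$ of the local system, but the content is the same.
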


\begin{proof}
  The perverseness of $\CF^{(\bmu,\bnu)}$ follows from the smoothness of $\bp\circ\jmath$
  (Proposition~\ref{smooth}) and the semismallness of $\bq$~(Proposition~\ref{semismall})
  along with the cleanness result of~Lemma~\ref{clean}.
  The irreducibility of $\CF^{(\bmu,\bnu)}$ follows from the factorization property and
  the vanishing of top degree compactly supported cohomology of the central fiber
  $\sW^{(\bmu,\bnu)}_{\leq(\blambda,\btheta)}$ with coefficients in the restriction of
  $\jmath^*\bp^\circ\CM$ to the canonical (up to scalar multiplication) section $s$ of
  $\bp^*\bCD$ on $\sW^{(\bmu,\bnu)}_{\leq(\blambda,\btheta)}$ (see~\S\ref{Two line}).
  Here `top degree' means $|(\blambda,\btheta)-(\bmu,\bnu)|$, see the proof
  of~Proposition~\ref{semismall}. Since the dimension of the central fiber is at most
  $\frac12|(\blambda,\btheta)-(\bmu,\bnu)|$, the desired cohomology vanishing follows if we
  know that the monodromy of the local system $s^*\jmath^*\bp^\circ\CM$ on a nonempty open subvariety
  $Y^0$ of each top-dimensional irreducible component $Y$ of the central fiber is {\em nontrivial}.
  This nontriviality follows in turn from~Lemma~\ref{pole} and the fact that $q$ has infinite order
  in $\BC^\times$. Indeed, the monodromy of the local system around the boundary component
  $\partial_i\overline{Y}\subset\overline{Y}$ equals $q$ raised to the power equal to the order of
  the zero or pole of $s$ at $\partial_i\overline{Y}$.
\end{proof}

\begin{cor}
  \label{irreducib}
  \textup{(a)} $F$ is an exact functor $S\Perv_{\GL(N-1,\bO),q}(\bCD)\to\on{FS}$.

  \textup{(b)} We have $F(\IC^q_{\blambda,\btheta})=\CF_{\blambda,\btheta}$.

  \textup{(c)} $F$ is conservative and faithful.
\end{cor}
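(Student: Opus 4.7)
The plan is to assemble (a), (b), (c) from the three geometric inputs already established: smoothness of $\bp\circ\jmath$ (Proposition~\ref{smooth}), semismallness of $\bq$ (Proposition~\ref{semismall}), and the cleanness of the extension (Lemma~\ref{clean}). For (a), given $\CM\in S\Perv_{\GL(N-1,\bO),q}(\bCD)$, smoothness makes $\bp^\circ\CM$ perverse on $\wt\CW^{(\bmu,\bnu)}_{\leq(\blambda,\btheta)}$ (the shift in Definition~\ref{pcirc} is precisely the smoothness shift), so $\jmath^*\bp^\circ\CM$ is perverse. Cleanness gives $\jmath_!=\jmath_*$, so $\jmath_!\jmath^*\bp^\circ\CM$ is perverse on $\ol\CW^{(\bmu,\bnu)}_{\leq(\blambda,\btheta)}$. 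Since $\bq$ is proper and semismall on $\ol\CW$, the pushforward $\bq_*=\bq_!$ preserves perversity, so $F(\CM)^{(\bmu,\bnu)}$ is perverse. The factorization isomorphisms~\eqref{factor sheaves} descend from~\eqref{factor zas} by base change, using Lemma~\ref{compare line bun} to identify $\bp^*\CalD$ with $\bq^*\CP$ over $\wt\CW$. The finiteness conditions of~\S\ref{factor shv} are verified orbit-by-orbit, ultimately reducing to the irreducible case.

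For (b), Lemma~\ref{irreduc} already produces an irreducible object $F(\IC^q_{\blambda,\btheta})$ of $\on{FS}$; it remains to pin it down as $\CF_{\blambda,\btheta}$. I would examine the top stratum: by Definition~\ref{olW}\textup{(0)}, $\ol\CW^{(\blambda,\btheta)}_{\leq(\blambda,\btheta)}$ is a single point, so $F(\IC^q_{\blambda,\btheta})^{(\blambda,\btheta)}$ is the monodromic skyscraper over the divisor $(\blambda,\btheta)\cdot c\in C^{(\blambda,\btheta)}_{\leq(\blambda,\btheta)}$. Since this ``highest-weight'' property characterizes $\CF_{\blambda,\btheta}$ among the irreducibles of $\on{FS}$ under Theorem~\ref{bfsl}, the identification follows. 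Part (c) is then a formality: exactness from (a) together with (b) shows that $F$ sends every simple to a nonzero simple, so a Jordan--H\"older argument (every nonzero perverse sheaf has a simple subquotient, whose image is nonzero) yields that $F$ is conservative; an exact conservative functor between abelian categories is faithful, because if $F(f)=0$ then $F$ of the image of $f$ vanishes, forcing the image to be zero.

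The principal obstacle I foresee is the properness needed to apply $\bq_*=\bq_!$: the ambient $\wh\CW^{(\bmu,\bnu)}_{\leq(\blambda,\btheta)}$ may be reducible, and $\ol\CW^{(\bmu,\bnu)}_{\leq(\blambda,\btheta)}$ is only defined as the closure of the principal component, so properness must be verified on this closure (either by inheritance from a proper ambient, or directly by identifying $\ol\CW$ as a closed subscheme of a proper scheme over $C^{(\bmu,\bnu)}_{\leq(\blambda,\btheta)}$). A secondary subtlety is matching the normalization in Theorem~\ref{bfsl}: the labeling of irreducible factorizable sheaves by bisignatures must coincide with the geometric labeling of the $\GL(N-1,\bO)$-orbits $\BO_{\blambda,\btheta}$, and the top-stratum skyscraper description is what cleanly pins down this identification.
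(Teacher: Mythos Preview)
Your overall strategy matches the paper's: exactness from smoothness of $\bp\circ\jmath$, cleanness, and semismallness of $\bq$; (b) from Lemma~\ref{irreduc}; (c) as a formal consequence. Your top-stratum argument for (b) is a legitimate (and arguably more explicit) way to pin down the highest weight than the paper's terse ``immediate from Lemma~\ref{irreduc}''.

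There is, however, a real gap in your treatment of the factorization structure in (a). Saying that \eqref{factor sheaves} ``descends from \eqref{factor zas} by base change'' is not enough. The factorization isomorphism \eqref{factor zas} splits $\ol\CW$ over the disjoint locus as $\ol\CW^{(\bmu,\bnu)}_{\leq(\blambda,\btheta)}\times\ol{Z}{}^\beta$; on the second factor $\bp$ is constant (the Hecke modification away from $c$ is trivial), so $\bp^\circ\CM$ restricts there to the shifted constant sheaf. You still have to identify $\bq_*$ of this constant sheaf on $\ol{Z}{}^\beta$ with the specific perverse sheaf $\CI^\beta$ of~\S\ref{mon shv}. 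The paper does this in two steps: first, $\pi\colon\ol{Z}{}^\beta\to C^\beta$ is an \emph{isomorphism} over the open stratum $\oC^\beta$ (\cite[Theorem 6.3.4]{sw}), so over $\oC^\beta$ the identification with $\oCI^\beta$ is tautological; second, the irreducibility result of Lemma~\ref{irreduc} (applied to $\CM=\IC^q_{0,0}$) forces the extension to all of $C^\beta$ to be the Goresky--MacPherson extension, i.e.\ $\CI^\beta$. This is the substantive content behind the factorization property, and it is exactly what is missing from your sketch. Lemma~\ref{compare line bun} only matches the line bundles; it does not by itself give you $\CI^\beta$.

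Your concern about properness of $\bq$ on $\ol\CW$ is legitimate but easily addressed: $\ol\CW$ is closed in $\wh\CW$, and $\bq=\pi\circ\br$ with $\br$ a partial resolution to $\ol{Z}$ and $\pi$ proper on $\ol{Z}$.
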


\begin{proof}
  (a) The exactness follows just as in the beginning of the proof of~Lemma~\ref{irreduc}.
  The factorization
  property of $F(\CM)$ follows from the factorization property of SW zastava with poles
  in~\S\ref{Factor zas} and the isomorphism $\CF^{(\bmu,\bnu)}\cong\CI^\alpha$ for
  $\CM=\IC^q_{0,0},\ \alpha\in X_{\on{pos}}$ and $(\bmu,\bnu)=-\alpha$ (notation of~\S\ref{mon shv}).
  The latter isomorphism is tautological on $\oC^\alpha$ since
  $\bq=\pi\colon\ol\CW{}^{(\bmu,\bnu)}_{\leq(0,0)}=\ol{Z}{}^\alpha\to C^\alpha$
  is an isomorphism over $\oC^\alpha$ (notation of~\S\ref{comp zas}). It extends to the whole
  of $C^\alpha$ due to the irreducibility property of~Lemma~\ref{irreduc}.
  The finiteness properties~\ref{factor shv}(a,b) are evident, and~\ref{factor shv}(c)
  follows since it is satisfied for an irreducible factorizable sheaf $\CF_{\blambda,\btheta}$
  (for any $(\blambda,\btheta)$ satisfying~\eqref{serg}) by~Theorem~\ref{bfsl}.

  (b) is immediate from~Lemma~\ref{irreduc}, and~(c) follows from~(a) and~(b).
\end{proof}

Recall that the abelian category $S\Perv_{\GL(N-1,\bO),q}(\bCD)$ is equipped with a braided tensor
structure given by {\em fusion} $\star$ as in~\cite[\S4.1]{bfgt}. Note that the associativity
of the fusion monoidal structure is not obvious at all (we need to construct an isomorphism
between various iterated nearby cycles). However, it follows from the next proposition ($F$ is
a monoidal functor to the category $\on{FS}$ with associative monoidal structure)
and~Corollary~\ref{irreducib} ($F$ is conservative).

\begin{prop}
  \label{braided}
$F$ is a braided tensor functor inducing an isomorphism of Grothendieck rings of
$S\Perv_{\GL(N-1,\bO),q}(\bCD)$ and $\on{FS}$.
\end{prop}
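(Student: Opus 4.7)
The plan is to exploit the fact that both braided tensor structures are defined by the same fusion mechanism, and that the SW zastava correspondence deforms naturally in families of marked points. More precisely, as recalled at the end of Section~3 (\emph{Multiple marked points}), for $n$ distinct points $(c_1,\ldots,c_n)\in\oC{}^n$ one has families $\bq\colon\ol\CW{}^{(\bmu,\bnu)}_{\leq(\blambda^{(1)},\btheta^{(1)}),\ldots,(\blambda^{(n)},\btheta^{(n)})}\to C^{(\bmu,\bnu)}_{\leq(\blambda^{(1)},\btheta^{(1)}),\ldots,(\blambda^{(n)},\btheta^{(n)})}\times\oC{}^n$ and $\bp$ to the product of Hecke stacks, all enjoying the semismallness, smoothness, factorization, and line bundle comparison properties of their single point counterparts. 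The push-pull construction with respect to this diagram gives a functor $F_n$ from the $n$-point version of the Gaiotto category to $\on{FS}_n$, and $F=F_1$.

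Given this, I would first verify that $F_n$ is compatible with the restriction functors: if one lets some of the $c_i$ collide, then on the geometric Gaiotto side the Beilinson--Drinfeld family of SW zastavas specializes to the one with fewer points; on the factorizable side the configuration space factorizes across disjoint loci in the same way by~\eqref{factor zas}. Consequently, $F$ commutes with the fusion (nearby cycles) functors that define the braided tensor structure on both sides: nearby cycles commute with the smooth pull-back $\bp^\circ$ (up to shift) and with the semismall proper push-forward $\bq_*\jmath_!\jmath^*$ by the cleanness statement of Lemma~\ref{clean}, because $\jmath_!=\jmath_*$ persists in families. From the matching of unit objects (already embedded in Corollary~\ref{irreducib}(a), where $F(\IC^q_{0,0})=\CF_{0,0}=\CI^0$), this promotes $F$ to a monoidal functor; the braiding is then determined by the $\BZ/2$-monodromy of the two-point family on $\oC{}^2$, which is literally the same geometric datum on both sides, so $F$ is braided.

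For the Grothendieck ring statement, I would combine Corollary~\ref{irreducib}(b) with Theorem~\ref{bfsl}. The irreducible objects of $S\Perv_{\GL(N-1,\bO),q}(\bCD)$ are the $\IC^q_{\blambda,\btheta}$ indexed by relevant bisignatures (Proposition~\ref{supports}), and by~\cite[Theorem 48]{ge} these are precisely the bisignatures indexing irreducibles of $\Rep_q(\GL(N-1|N))$, hence by Theorem~\ref{bfsl} the irreducible factorizable sheaves $\CF_{\blambda,\btheta}$. Since $F(\IC^q_{\blambda,\btheta})=\CF_{\blambda,\btheta}$ (Corollary~\ref{irreducib}(b)) and $F$ is exact (Corollary~\ref{irreducib}(a)), $F$ induces a bijection between the bases of simples of the two Grothendieck groups; combined with the monoidality established in the previous step, this gives a ring isomorphism.

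The main obstacle is the compatibility of $F$ with nearby cycles. One must check that the SW zastava family over $\oC{}^n$ with poles at the moving points is sufficiently well-behaved (flat in the appropriate stratified sense, with unipotent monodromy on the relevant strata) for nearby cycles to commute with $\bq_*\jmath_!\jmath^*\bp^\circ$. The smoothness of $\bp$ (Proposition~\ref{smooth}) handles the pull-back, and the stratified semismallness of $\bq$ (Proposition~\ref{semismall}) together with cleanness (Lemma~\ref{clean}) handle the push-forward; the remaining point is that the specialization of $\ol\CW$ as two marked points collide agrees with the convolution/product diagram built into the definition of fusion on both sides, which should follow from the factorization isomorphism~\eqref{factor zas} exactly as in the argument of~\cite{g} for the FLE.
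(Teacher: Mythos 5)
Your proposal is correct and follows essentially the same route as the paper: the paper's proof likewise observes that both braided structures are defined via the same fusion (nearby cycles over colliding marked points), that nearby cycles commute with $F$ because $\bp\circ\jmath$ is smooth and $\bq$ is proper together with the cleanness of Lemma~\ref{clean}, and that the Grothendieck ring statement follows from Corollary~\ref{irreducib}. Your version just spells out the multi-point zastava family and the matching of simples in more detail than the paper's two-line argument.
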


\begin{proof}
  The braided tensor structures on both categories $S\Perv_{\GL(N-1,\bO),q}(\bCD)$ and $\on{FS}$ are
  defined via nearby cycles. The nearby cycles commute with the functor $F$ because
  $\bp\circ\jmath$ is smooth, while $\bq$ is proper (we are using the cleanness property
  of~Lemma~\ref{clean}). The fact that $F$ induces an isomorphism of Grothendieck rings follows
  from~Corollary~\ref{irreducib}.
\end{proof}

\subsection{Rigidity}

We denote by $\IC^q_{\on{taut}}$ the irreducible sheaf $\IC^q_{(0,\ldots,0),(1,0,\ldots,0)}$.
We denote by $(\IC^q_{\on{taut}})^*$ the irreducible sheaf $\IC^q_{(0,\ldots,0),(0,\ldots,0,-1)}$.
We denote by $\IC^q_{\on{ad}}$ the irreducible sheaf $\IC^q_{(0,\ldots,0),(1,0,\ldots,0,-1)}$.

\begin{lem}
  \label{rig taut}
  We have $\IC^q_{\on{taut}}\star(\IC^q_{\on{taut}})^*\simeq\IC^q_{\on{ad}}\oplus\IC^q_{0,0}$.
\end{lem}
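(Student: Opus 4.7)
The plan is to apply the braided tensor functor $F$ from \propref{braided} and reduce to a computation in $\Rep_q(\GL(N-1|N))$. By \corref{irreducib}(b), $F$ sends $\IC^q_{\on{taut}}$ and $(\IC^q_{\on{taut}})^*$ to the irreducible factorizable sheaves $\CF_{\on{taut}}$ and $\CF_{\on{taut}}^*$, which under \thmref{bfsl} correspond to the tautological representation $V_{\on{taut}}$ of $U_q(\fgl(N-1|N))$ and its dual. Since $F$ is braided tensor, $F(\IC^q_{\on{taut}}\star(\IC^q_{\on{taut}})^*)\simeq V_{\on{taut}}\otimes V_{\on{taut}}^*$ in $\on{FS}\simeq\Rep_q(\GL(N-1|N))$.

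Next I would compute this tensor product on the representation-theoretic side. The quantum supertrace yields a surjection $V_{\on{taut}}\otimes V_{\on{taut}}^*\twoheadrightarrow\one$, whose kernel is the irreducible adjoint representation $V_{\on{ad}}=V_{(\mathbf{0},(1,0,\ldots,0,-1))}$. A splitting is provided by the coevaluation scaled by the reciprocal of the quantum superdimension $[N-1]_q-[N]_q$, which is nonzero for transcendental $q$. This gives $V_{\on{taut}}\otimes V_{\on{taut}}^*\simeq V_{\on{ad}}\oplus\one$. Since $F$ induces an isomorphism of Grothendieck rings by \propref{braided} and sends distinct irreducibles to pairwise non-isomorphic irreducibles, the Jordan--H\"older factors of $\IC^q_{\on{taut}}\star(\IC^q_{\on{taut}})^*$ are exactly $\IC^q_{\on{ad}}$ and $\IC^q_{0,0}$, each appearing with multiplicity one.

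To upgrade this Jordan--H\"older filtration to a direct sum, I would show that $\End(\IC^q_{\on{taut}}\star(\IC^q_{\on{taut}})^*)$ is two-dimensional, matching $\End_{\on{FS}}(\CF_{\on{taut}}\otimes\CF_{\on{taut}}^*)\simeq\BC\times\BC$. The faithfulness of $F$ from \corref{irreducib}(c) provides an injection; to produce the non-identity projector I would either construct evaluation and coevaluation morphisms in the Gaiotto category via a geometric pairing on the central fiber of the fusion diagram (so that their composition yields the quantum superdimension, hence a nonzero scalar on $\IC^q_{0,0}$), or appeal to the Decomposition Theorem applied to the semismall convolution morphism in the Beilinson--Drinfeld Grassmannian, which immediately gives semisimplicity of the fusion product.

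The main obstacle is establishing the splitting without already having a rigidity statement for the full Gaiotto category in hand. I expect to resolve it either through a direct geometric analysis of the top-dimensional irreducible components of the central fiber (in the spirit of \lemref{pole}), or by specializing to $q=1$, invoking the analogous decomposition from \cite{bfgt}, and deforming within the transcendental locus of $q$, using that the composition factors remain constant under the deformation.
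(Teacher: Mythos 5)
Your first half is fine and matches the paper: using Proposition~\ref{braided} and Corollary~\ref{irreducib} (equivalently, Theorem~\ref{bfsl} and the computation $V\otimes V^*\simeq V_{\on{ad}}\oplus\one$ in $\Rep_q(\GL(N-1|N))$, valid since the quantum superdimension is nonzero) one gets that the class of $\IC^q_{\on{taut}}\star(\IC^q_{\on{taut}})^*$ in the Grothendieck group is $[\IC^q_{\on{ad}}]+[\IC^q_{0,0}]$. The genuine gap is the splitting step, and neither of your proposed routes works as stated. The Decomposition Theorem route proves too much: the fusion product here is defined by nearby cycles of $q$-monodromic sheaves, not by a proper semismall convolution of sheaves of geometric origin, and if it ``immediately gave semisimplicity'' of fusion products of irreducibles, the whole category $S\Perv_{\GL(N-1,\bO),q}(\bCD)\simeq\Rep_q(\GL(N-1|N))$ would be semisimple, which it is not. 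The evaluation/coevaluation route is essentially rigidity of $\IC^q_{\on{taut}}$ in the Gaiotto category, but in the paper's logical order rigidity (Corollary~\ref{rig}) is \emph{deduced} from Lemma~\ref{etingof}, whose proof uses the present lemma; so you would need an independent geometric construction of the duality morphisms, which you only gesture at. Likewise, computing $\End$ via $F$ requires full faithfulness, while at this stage only faithfulness (Corollary~\ref{irreducib}(c)) is available; and the deformation-to-$q=1$ alternative is precisely the route the paper says it avoids.

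The paper's actual splitting argument is a short, purely local observation about the loop rotation: $\IC^q_{0,0}$ is equivariant for the loop-rotation $\Gm$, while $\IC^q_{\on{ad}}$ is loop-rotation monodromic with monodromy $q^2$ (in general $\IC^q_{\bmu,\bnu}$ has loop-rotation monodromy $q^{(\bnu,\bnu)-(\bmu,\bmu)}$). Since $q$ is transcendental, $q^2\neq1$, so the two composition factors lie in different generalized eigenspaces of the loop-rotation monodromy acting on $\IC^q_{\on{taut}}\star(\IC^q_{\on{taut}})^*$, and hence there is no nontrivial extension between them inside this object; the Jordan--H\"older filtration splits. If you replace your splitting step by this monodromy-eigenvalue argument, your proof becomes correct and coincides with the paper's.
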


\begin{proof}
  The class of $\IC^q_{\on{taut}}\star(\IC^q_{\on{taut}})^*$ in $K(S\Perv_{\GL(N-1,\bO),q}(\bCD))$ equals
  $[\IC^q_{\on{ad}}]+[\IC^q_{0,0}]$ by~Proposition~\ref{braided},~Corollary~\ref{irreducib}
  and~Theorem~\ref{bfsl}.
  It remains to check that $\IC^q_{\on{taut}}\star(\IC^q_{\on{taut}})^*$ is semisimple. This follows from
  the next observation. The sheaf $\IC^q_{0,0}$ is equivariant with respect to the loop rotation,
  while $\IC^q_{\on{ad}}$ is monodromic with respect to the loop rotation with monodromy $q^2$.
  In fact, $\IC^q_{\bmu,\bnu}$ is monodromic with respect to the loop rotation with monodromy
  $q^{(\bnu,\bnu)-(\bmu,\bmu)}$. Indeed, the point $L_{\bmu,\bnu}\in\BO_{\bmu,\bnu}$ (see the proof
  of~\cite[Lemma 2.3.2]{bft}) is not invariant with respect to the loop rotation group
  $\BC^\times_{\on{rot}}$, but is invariant with respect to an appropriate one-parametric subgroup
  $\BC^\times\hookrightarrow T\times\BC^\times_{\on{rot}}$ (here $T\subset\GL_N$ is the diagonal Cartan torus)
  whose projection to $\BC^\times_{\on{rot}}$ is an isomorphism. Now it is straightforward to compute
  the monodromy in the fiber of $\bCD$ over $L_{\bmu,\bnu}$ of the above one-parametric subgroup.
\end{proof}

We consider the full subcategory $\CE$ of $S\Perv_{\GL(N-1,\bO),q}(\bCD)$ fusion generated by
$\IC^q_{\on{taut}},(\IC^q_{\on{taut}})^*,\IC^q_{0,0}$: it is the smallest full abelian subcategory
containing the above sheaves, closed under taking fusion products, images, kernels and cokernels.

\begin{lem}
  \label{etingof}
  $F|_\CE\colon \CE\to\on{FS}$ is a braided tensor equivalence.
\end{lem}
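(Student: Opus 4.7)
The strategy is to realize $F|_\CE$ as an equivalence of braided tensor categories via a Cherednik-style reconstruction: both sides are generated by tautological data satisfying the same characteristic identity for the braiding, and for transcendental $q$ a reconstruction theorem identifies them. The plan breaks into three stages: essential surjectivity, rigidity of $\CE$, and fully faithfulness via a Hecke-type algebra argument.

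For essential surjectivity, I would use Theorem~\ref{bfsl} to identify $F(\IC^q_{\on{taut}})$ and $F((\IC^q_{\on{taut}})^*)$ with the tautological representation $V_{\on{taut}}$ of $U_q(\fgl(N-1|N))$ and its dual. For generic $q$, every irreducible $V_{\bmu,\bnu}$ appears as a subquotient of some tensor product built from $V_{\on{taut}}$ and $V_{\on{taut}}^*$, as in the classical case $q=1$; consequently, the fusion-closed abelian subcategory of $\on{FS}$ generated by the images of the generators of $\CE$ is all of $\on{FS}$. Together with the Grothendieck-ring isomorphism of Proposition~\ref{braided}, this gives essential surjectivity.

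For rigidity, Lemma~\ref{rig taut} supplies a natural projection $\IC^q_{\on{taut}}\star(\IC^q_{\on{taut}})^*\twoheadrightarrow\IC^q_{0,0}$ playing the role of evaluation, and dually (via Verdier duality, interchanging $q$ and $q^{-1}$) an inclusion $\IC^q_{0,0}\hookrightarrow(\IC^q_{\on{taut}})^*\star\IC^q_{\on{taut}}$ as coevaluation. The snake identities can be pulled back from the known rigidity on the $\on{FS}$ side via the braided tensor functor $F$ using faithfulness from Corollary~\ref{irreducib}(c). This makes $(\IC^q_{\on{taut}})^*$ the categorical dual of $\IC^q_{\on{taut}}$ inside $\CE$, and rigidity of the remaining objects of $\CE$ follows by induction on word length in the generators.

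For fully faithfulness I would follow~\cite{c}. The braid group acts on $V_{\on{taut}}^{\otimes n}$ through a Hecke-type quotient algebra whose structure is controlled by the minimal polynomial of the $R$-matrix on $V_{\on{taut}}\otimes V_{\on{taut}}$; for transcendental $q$ this quotient is semisimple and its simple modules enumerate the isotypic components of $V_{\on{taut}}^{\otimes n}$. By Proposition~\ref{braided}, $F$ intertwines this action with the braid action on $(\IC^q_{\on{taut}})^{\star n}$, and the minimal polynomials on both sides match by virtue of Lemma~\ref{rig taut} and its counterpart on $\on{FS}$. This yields an isomorphism of endomorphism algebras on every tensor word in the generators, which together with rigidity upgrades the isomorphism of Grothendieck rings to an isomorphism on all $\Hom$ spaces, giving fully faithfulness. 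The main obstacle is verifying semisimplicity and the correct decomposition of the Hecke-type algebra in the super setting; this is precisely where the Weil genericity of $q$ enters, since transcendence guarantees the absence of degeneracies that would obstruct the reconstruction.
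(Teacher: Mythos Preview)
Your overall architecture---establish rigidity of the tautological generator inside $\CE$, then bootstrap fully faithfulness from the braiding, then essential surjectivity---is the same as the paper's, and your rigidity step is essentially identical to theirs (the evaluation and coevaluation sit in the image of $\Hom_\CE$ by Lemma~\ref{rig taut}, and the zig-zag identities transport back from $\on{FS}$ by faithfulness).

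There is, however, a genuine gap in your essential surjectivity argument, and your ordering hides it. Knowing that the fusion-closed abelian subcategory of $\on{FS}$ generated by $F(\IC^q_{\on{taut}})$ and $F((\IC^q_{\on{taut}})^*)$ is all of $\on{FS}$ does \emph{not} show that the essential image of $F|_\CE$ is all of $\on{FS}$: until $F$ is known to be full, kernels and cokernels in $\on{FS}$ of morphisms not in the image of $F$ may produce objects outside $F(\CE)$. The Grothendieck-ring isomorphism does not help here, since $\on{FS}\cong\Rep_q(\GL(N-1|N))$ is not semisimple. The paper fixes this by first proving fully faithfulness on tensor words in $V,V^*$ (their point~(a): the Hom spaces are spanned by tangle diagrams, which $F$ manifestly carries to tangle diagrams), and then invoking the Frobenius property of $\Rep_q(\GL(N-1|N))$: every projective-injective is a direct summand of some $V^{\otimes n}\otimes V^{*\otimes m}$, so every object is the image of a morphism between such tensor words, hence lies in $F(\CE)$. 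You never mention this Frobenius step, and without it essential surjectivity is unproved.

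A smaller point: your detour through semisimplicity of a Hecke-type quotient is unnecessary and potentially misleading. The paper does not need any semisimplicity statement for the centralizer algebra; it only needs that tangle diagrams \emph{span} $\Hom(V^{\otimes n}\otimes V^{*\otimes m},V^{\otimes n'}\otimes V^{*\otimes m'})$ (this is what is cited from~\cite{c}). Since $F$ is a faithful braided tensor functor carrying rigidity data to rigidity data, it surjects onto such Homs automatically. Your worry about ``semisimplicity and the correct decomposition of the Hecke-type algebra'' is therefore aimed at the wrong target; the actual input is the tangle-spanning statement, and the genericity of $q$ is used only to import that statement from $q=1$.
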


\begin{proof} (P.~Etingof)
  Since $F$ is injective on morphisms, we have to check that $F|_\CE$ is surjective on morphisms
  and essentially surjective. The irreducibles
  $F(\IC^q_{0,0})\simeq\CF_{0,0},\ F(\IC^q_{\on{taut}})\simeq\CF_{\on{taut}},\
  F(\IC^q_{\on{taut}})^*\simeq\CF_{\on{taut}}^*$ correspond under the braided tensor equivalence
  $\on{FS}\cong\Rep_q(\GL(N-1|N))$ of~Theorem~\ref{bfsl} to the tensor unit~$\BC$,
  the tautological representation $V=\BC^{N-1|N}$ and its dual $V^*$. According to~Lemma~\ref{rig taut},
  the morphisms $\CF_{0,0}\to\CF_{\on{taut}}\star\CF_{\on{taut}}^*$ and
  $\CF_{\on{taut}}^*\star\CF_{\on{taut}}\to\CF_{0,0}$ corresponding to the rigidity morphisms
  $\BC\to V\otimes V^*,\ V^*\otimes V\to\BC$, lie in the image of $\Hom_\CE$.
  However, the above objects and morphisms
  generate $\on{FS}\cong\Rep_q(\GL(N-1|N))$ in the sense that

  a) the vector space
  $\Hom_{\Rep_q(\GL(N-1|N))}(V^{\otimes n}\otimes V^{*\otimes m},V^{\otimes n'}\otimes V^{*\otimes m'})$
  is spanned by the tangle diagrams for any $m,n,m',n'\in\BN$. Hence
  \begin{multline*}
    \Hom_{\on{FS}}(\CF_{\on{taut}}^{\star n}\star(\CF_{\on{taut}}^*)^{\star m},
  \CF_{\on{taut}}^{\star n'}\star(\CF_{\on{taut}}^*)^{\star m'})\\
  \simeq\Hom_{\Rep_q(\GL(N-1|N))}(V^{\otimes n}\otimes V^{*\otimes m},V^{\otimes n'}\otimes V^{*\otimes m'})
  \end{multline*}
  is isomorphic to
  $\Hom_\CE\left((\IC^q_{\on{taut}})^{\star n}\star((\IC^q_{\on{taut}})^*)^{\star m},
  (\IC^q_{\on{taut}})^{\star n'}\star((\IC^q_{\on{taut}})^*)^{\star m'}\right)$;

  b) every object of $\Rep_q(\GL(N-1|N))$ is
  a subquotient of a tensor product $V^{\otimes n}\otimes V^{*\otimes m}$ for some $m,n\in\BN$.
  In particular, every projective-injective object of the Frobenius category
  $\on{FS}\cong\Rep_q(\GL(N-1|N))$ is a direct summand of
  $\CF_{\on{taut}}^{\star n}\star(\CF_{\on{taut}}^*)^{\star m}$ for some $m,n\in\BN$.

  For a proof for $\Rep(\fgl(M|N))$ see e.g.~\cite[Lemma 1.4.4(i)]{c}; the proof for
  $\Rep_q(\GL(N-1|N))$ for transcendental $q$ is the same.

  Hence every object of $\on{FS}$ is isomorphic to $F(\CM)$ for some $\CM\in\CE$,
  being the image of a morphism from a projective
  object (= a direct summand of $\CF_{\on{taut}}^{\star n}\star(\CF_{\on{taut}}^*)^{\star m}$) to an injective
  object (= a direct summand of $\CF_{\on{taut}}^{\star n'}\star(\CF_{\on{taut}}^*)^{\star m'}$). And every
  morphism $F(\CM_1)\to F(\CM_2)$ is in the image of $\Hom_\CE(\CM_1,\CM_2)$ as a morphism
  from a quotient $F(\CM_1)$ of $\CF_{\on{taut}}^{\star n}\star(\CF_{\on{taut}}^*)^{\star m}$ to a subsheaf
  $F(\CM_2)$ of $\CF_{\on{taut}}^{\star n'}\star(\CF_{\on{taut}}^*)^{\star m'}$.

  Thus $F|_\CE\colon\CE\iso\on{FS}$ is a braided tensor equivalence.
\end{proof}

\begin{cor}
  \label{rig}
  All the irreducibles $\IC^q_{\bmu,\bnu}$ are rigid (in the sense of the fusion tensor structure
  $\star$ on $S\Perv_{\GL(N-1,\bO),q}(\bCD)$).
\end{cor}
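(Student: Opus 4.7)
The plan is to pull rigidity back from $\on{FS}$ via the equivalence $F|_\CE$ of Lemma~\ref{etingof}. The category $\Rep_q(\GL(N-1|N))$ is a rigid braided tensor category (every finite-dimensional representation of a quantum supergroup has a dual furnished by the antipode), hence so is $\on{FS}$ by Theorem~\ref{bfsl}. Transporting along the braided tensor equivalence $F|_\CE\colon\CE\iso\on{FS}$, every object of $\CE$ is rigid inside $\CE$; and since $\CE$ is a full tensor subcategory of $S\Perv_{\GL(N-1,\bO),q}(\bCD)$ closed under $\star$, rigidity of an object of $\CE$ in $\CE$ coincides with its rigidity in the ambient fusion category.

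It will therefore suffice to show that every irreducible $\IC^q_{\bmu,\bnu}$ lies in $\CE$. My plan is as follows. Given a relevant bisignature $(\bmu,\bnu)$, consider the irreducible factorizable sheaf $\CF_{\bmu,\bnu}\in\on{FS}$. By essential surjectivity in Lemma~\ref{etingof}, there exists an object $\CM\in\CE$ with $F(\CM)\cong\CF_{\bmu,\bnu}$. I would then argue that $\CM$ is irreducible not merely in $\CE$ but in the ambient abelian category $S\Perv_{\GL(N-1,\bO),q}(\bCD)$: any nonzero subobject $\CN\subset\CM$ taken in the ambient category yields, by exactness of $F$ (Corollary~\ref{irreducib}(a)), a nonzero subobject $F(\CN)\subset F(\CM)\cong\CF_{\bmu,\bnu}$ (nonzero by conservativity, Corollary~\ref{irreducib}(c)); irreducibility of $\CF_{\bmu,\bnu}$ forces $F(\CN)=F(\CM)$, and conservativity applied to $\CM/\CN$ then gives $\CN=\CM$.

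Consequently $\CM\cong\IC^q_{\blambda,\btheta}$ for some relevant $(\blambda,\btheta)$. Comparing with $F(\IC^q_{\blambda,\btheta})\cong\CF_{\blambda,\btheta}$ from Corollary~\ref{irreducib}(b), and using that distinct relevant bisignatures parametrize distinct irreducibles on both sides of the functor $F$, I conclude $(\blambda,\btheta)=(\bmu,\bnu)$, so $\IC^q_{\bmu,\bnu}\in\CE$ and is thus rigid. The only delicate point in this strategy is the claim that an irreducible of $\CE$ remains irreducible in the larger category; it is the hinge of the bootstrap, but I expect it to drop out formally from exactness and conservativity of $F$ without extra input beyond what is already established.
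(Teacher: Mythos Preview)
Your proof is correct and follows essentially the same route as the paper: rigidity is pulled back from $\on{FS}\cong\Rep_q(\GL(N-1|N))$ through the braided tensor equivalence $F|_\CE$ of Lemma~\ref{etingof}, once one knows that every irreducible $\IC^q_{\bmu,\bnu}$ belongs to $\CE$. The paper's proof simply asserts this last fact, whereas you supply a clean justification via exactness and conservativity of $F$ (Corollary~\ref{irreducib}); that argument is sound and not circular, since Lemma~\ref{etingof} and Corollary~\ref{irreducib} are established independently of Corollary~\ref{rig}.
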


\begin{proof}
  The category $\on{FS}\cong\Rep_q(\GL(N-1|N))$ is rigid. It is braided tensor equivalent to
  the full subcategory $\CE\subset S\Perv_{\GL(N-1,\bO),q}(\bCD)$ containing all the irreducibles
  $\IC^q_{\bmu,\bnu}$.
  \end{proof}

\subsection{Projective sheaves}
The following proposition will be proved in~\S\ref{proof no stalk} below.

\begin{prop}
  \label{no stalk}
  Let $(\bmu,\bnu)$ be a relevant typical bisignature. Let $(\blambda,\btheta)\ne(\bmu,\bnu)$ be
  a relevant bisignature. Then the costalk of $\IC^q_{\blambda,\btheta}$ at $\BO_{\bmu,\bnu}$ is zero.
\end{prop}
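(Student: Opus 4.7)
My plan is to transport the costalk computation along the SW zastava correspondence to the category of factorizable sheaves, then identify the resulting cohomology with an Ext group in the category $\CO$ of $U_q(\fgl(N-1|N))$, and conclude by the linkage principle. Since the costalk at a point of $\BO_{\bmu,\bnu}\subset\Gr_{\GL_N}$ is local on $\Gr_{\GL_N}$, I may freely choose $C=\BA^1$ and $c=0$. By \propref{smooth}, the map $\bp\circ\jmath\colon\wt\CW{}^{(\bmu,\bnu)}_{\leq(\blambda,\btheta)}\to\GL(N-1,\bO)\backslash\ol\BO_{\blambda,\btheta}$ is smooth, and combined with \lemref{central fib} and the explicit description of $L_{\bmu,\bnu}\in\BO_{\bmu,\bnu}$ in the proof of \propref{supports}, the central fiber $\ol\sW{}^{(\bmu,\bnu)}_{\leq(\blambda,\btheta)}$ is (in a neighbourhood of $L_{\bmu,\bnu}$) a transversal slice to $\BO_{\bmu,\bnu}$ inside $\ol\BO_{\blambda,\btheta}$. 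Using \lemref{compare line bun} to match $\bp^*\CalD$ and $\bq^*\CP$ on $\wt\CW$, the costalk of $\IC^q_{\blambda,\btheta}$ at a point of $\BO_{\bmu,\bnu}$ is identified, up to the shift from Definition~\ref{pcirc}, with the $!$-restriction of the twisted pullback $\bp^\circ\IC^q_{\blambda,\btheta}$ to the central fiber $\ol\sW{}^{(\bmu,\bnu)}_{\leq(\blambda,\btheta)}$.

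Next I would apply the contraction principle. With $C=\BA^1$, the scaling $\Gm$-action on $\BA^1$ fixing $c=0$ lifts to a contracting $\Gm$-action on $\ol\CW{}^{(\bmu,\bnu)}_{\leq(\blambda,\btheta)}$ that retracts the whole space onto the base point $(\bmu,\bnu)\cdot 0$ in the central fiber. For a $\Gm$-equivariant constructible complex, the $!$-restriction to the contraction base point computes the compactly supported cohomology of the total space, so the costalk reduces to $R\Gamma_c(\ol\CW{}^{(\bmu,\bnu)}_{\leq(\blambda,\btheta)},\bp^\circ\IC^q_{\blambda,\btheta})$. Pushing forward along $\bq$ (legitimate in view of \propref{semismall} and the push-pull definition of the functor $F=\bq_*\bp^\circ$), this equals the cohomology of $C^{(\bmu,\bnu)}_{\leq(\blambda,\btheta)}\cong C^\alpha$, where $\alpha:=(\blambda,\btheta)-(\bmu,\bnu)$, with coefficients in the component on $C^\alpha$ of the factorizable sheaf $\CF_{\blambda,\btheta}$ that \thmref{bfsl} associates to $V_{\blambda,\btheta}$.

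The third step is to re-interpret this configuration space cohomology representation-theoretically. The BFS/Lurie formalism underlying \thmref{bfsl} (see~\cite{bfs} and~\cite[\S29]{gl}) identifies the cohomology of configuration spaces on $\BA^1$ with coefficients in a factorizable sheaf with Hom and Ext spaces in $\CO$: concretely, the central-divisor cohomology of $\CF_{\blambda,\btheta}$ on $C^\alpha$ is $\Ext^\bullet_\CO(M_{\bmu,\bnu},V_{\blambda,\btheta})$, where $M_{\bmu,\bnu}$ denotes the Verma module of highest weight $(\bmu,\bnu)$ in the category $\CO$ of $U_q(\fgl(N-1|N))$.

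The proof is concluded by the linkage principle. Since $(\bmu,\bnu)$ is typical in the sense of Definition~\ref{typic}(b), its central character is regular, so $M_{\bmu,\bnu}$ is already simple and projective in $\CO$, and $V_{\bmu,\bnu}=M_{\bmu,\bnu}$ forms a singleton linkage class. Any other relevant bisignature $(\blambda,\btheta)\ne(\bmu,\bnu)$ therefore labels a simple module $V_{\blambda,\btheta}$ in a different block, forcing $\Ext^\bullet_\CO(M_{\bmu,\bnu},V_{\blambda,\btheta})=0$ in every degree and yielding the desired costalk vanishing. The main obstacle in this program is making the third step precise — extending the BFS matching between central-divisor cohomology of factorizable sheaves and category-$\CO$ Ext-groups to the $q$-monodromic super setting of this paper, while carefully tracking the shifts and twists coming from the contraction principle and from the line-bundle comparison in \lemref{compare line bun}; the smoothness (\propref{smooth}), semismallness (\propref{semismall}), and cleanness properties of the SW zastava advertised in the Introduction are what ensure that these manipulations stay within the perverse heart.
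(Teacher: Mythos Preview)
Your argument is essentially the paper's own (\S\ref{proof no stalk}): restrict to $C=\BA^1$, use the zastava as a transversal slice and the contraction principle to reduce the costalk to $H^\bullet_c$ of the configuration space with coefficients in $\CF_{\blambda,\btheta}$, identify this (via a second contraction and \thmref{bfsl}) with $\Ext^\bullet_\CO(M_{\bmu,\bnu},V_{\blambda,\btheta})$, and conclude by linkage. The one step the paper treats more carefully than you is the contraction of $\CW^{(\bmu,\bnu)}_{\leq(\blambda,\btheta)}$ to a \emph{single} fixed point $w$ (Lemmas~\ref{s} and~\ref{contract}), established via a locally closed embedding $\bs$ into the thick affine Grassmannian $\GR_G$ and crucially using that $(\bmu,\bnu)$ is a dominant bisignature; you assert this contraction without justification, and without it the contraction principle only gives the costalk along the entire central fiber rather than at a point.
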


\begin{cor}
  \label{bzeta}
  Let $\bzeta=(N-1,N-2,\ldots,2,1),\ \brho=(N-1,N-2,\ldots,2,1,0)$. Then $\IC^q_{\bzeta,\brho}$ is a
  projective and injective object of $S\Perv_{\GL(N-1,\bO),q}(\bCD)$.
  \end{cor}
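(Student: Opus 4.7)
The plan is to deduce Corollary~\ref{bzeta} directly from Proposition~\ref{no stalk} by combining it with the adjunction formalism for the orbit stratification of $\bCD$ and with Verdier duality.

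First I would verify the hypotheses of Proposition~\ref{no stalk} for $(\bmu,\bnu)=(\bzeta,\brho)$. Since both $\bzeta=(N-1,\ldots,1)$ and $\brho=(N-1,\ldots,1,0)$ are strictly decreasing, condition~\eqref{serg} is vacuous, so $(\bzeta,\brho)$ is relevant. Moreover $\zeta_i+\rho_j\geq 1>0$ for all $i,j$, so $(\bzeta,\brho)$ is typical. Proposition~\ref{no stalk} therefore gives
\[
i^!_{\BO_{\bzeta,\brho}}\IC^q_{\blambda,\btheta}=0 \qquad\text{for every relevant } (\blambda,\btheta)\neq(\bzeta,\brho).
\]
Applying the same proposition in the $q^{-1}$-monodromic setting and then Verdier-dualizing (using that $\mathbb D\,\IC^{q^{-1}}_{\blambda,\btheta}\simeq\IC^q_{\blambda,\btheta}$ up to a shift), I also obtain the stalk vanishing $i^*_{\BO_{\bzeta,\brho}}\IC^q_{\blambda,\btheta}=0$ for the same $(\blambda,\btheta)$.

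To prove projectivity I verify $\Ext^1_{S\Perv}(\IC^q_{\bzeta,\brho},\IC^q_{\blambda,\btheta})=0$ for every simple target. Let $\bar\jmath\colon\overline{\BO_{\bzeta,\brho}}\hookrightarrow\bCD$ be the closed embedding and $j_0\colon\BO_{\bzeta,\brho}\hookrightarrow\overline{\BO_{\bzeta,\brho}}$ the open inclusion, so $\IC^q_{\bzeta,\brho}=\bar\jmath_*\,j_{0,!*}\,\CL[\dim]$ for the rank-one $q$-monodromic local system $\CL$ on $\BO_{\bzeta,\brho}$. The adjunction $\bar\jmath_*\dashv\bar\jmath^!$ yields
\[
\RHom_{\bCD}(\IC^q_{\bzeta,\brho},\IC^q_{\blambda,\btheta})\simeq \RHom_{\overline{\BO_{\bzeta,\brho}}}\bigl(j_{0,!*}\CL[\dim],\,\bar\jmath^!\IC^q_{\blambda,\btheta}\bigr).
\]
The costalk vanishing above says exactly that $j_0^*\bar\jmath^!\IC^q_{\blambda,\btheta}=0$, so $\bar\jmath^!\IC^q_{\blambda,\btheta}$ is supported on the boundary $\partial\overline{\BO_{\bzeta,\brho}}$; combined with the perverse $!$-estimates for the middle extension $j_{0,!*}\CL[\dim]$ along boundary strata, this forces the $\Ext^1$-vanishing. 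The self-Ext case $(\blambda,\btheta)=(\bzeta,\brho)$ reduces to the first cohomology of a nontrivial $q$-monodromic local system on the $\GG_m$-fibration $\bCD|_{\BO_{\bzeta,\brho}}\to\BO_{\bzeta,\brho}$, which vanishes for transcendental $q$ by the Leray spectral sequence. Injectivity follows either symmetrically from the stalk vanishing of the previous paragraph, or by invoking rigidity of $\IC^q_{\bzeta,\brho}$ in the tensor subcategory $\CE\subset S\Perv_{\GL(N-1,\bO),q}(\bCD)$ constructed in the Introduction, under which projectivity and injectivity are interchanged by a two-sided dual.

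The main obstacle is the Ext-vanishing step: converting costalk vanishing on the open stratum $\BO_{\bzeta,\brho}$ into a global $\Ext^1$-vanishing in $S\Perv$ requires careful bookkeeping with the stratification long-exact sequence, and crucially uses the perverse t-structure bounds on the middle extension $j_{0,!*}\CL[\dim]$ along each boundary orbit. This is a standard but delicate stratified-perverse argument.
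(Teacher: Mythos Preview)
Your $\Ext^1$-vanishing step for $(\blambda,\btheta)\ne(\bzeta,\brho)$ does not go through as written. After reducing to
\[
\RHom_{\overline{\BO_{\bzeta,\brho}}}\bigl(j_{0,!*}\CL[\dim],\,M\bigr),\qquad M:=\bar\jmath^{\,!}\IC^q_{\blambda,\btheta},
\]
with $M$ supported on the boundary $\iota\colon\partial\overline{\BO_{\bzeta,\brho}}\hookrightarrow\overline{\BO_{\bzeta,\brho}}$, one rewrites this as $\RHom\bigl(\iota^*j_{0,!*}\CL[\dim],\,\iota^{!}M\bigr)$. The perverse estimates you invoke give only $\iota^*j_{0,!*}\CL[\dim]\in{}^pD^{\le-1}$ and $\iota^{!}M=(\bar\jmath\iota)^{!}\IC^q_{\blambda,\btheta}\in{}^pD^{\ge0}$, hence $\RHom\in D^{\ge1}$; this does \emph{not} kill $H^1$. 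So the ``standard but delicate stratified-perverse argument'' is in fact insufficient here. (Your treatment of the self-$\Ext^1$ is also off: the monodromies of source and target along the $\BG_m$-fibers coincide, so $\CHom(\CL,\CL)$ is the trivial local system and no Leray-style cancellation occurs.)

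What you are missing is the step the paper does first: one checks combinatorially that there is \emph{no} relevant bisignature $(\bmu',\bnu')<(\bzeta,\brho)$. By Proposition~\ref{supports} every boundary orbit in $\overline{\BO_{\bzeta,\brho}}$ then supports no $\GL(N-1,\bO)$-equivariant $q$-monodromic local system whatsoever, so $\iota^*j_{0,!*}\CL[\dim]=0$; in other words $\IC^q_{\bzeta,\brho}$ is a \emph{clean} extension, $j_{0,!}=j_{0,!*}=j_{0,*}$. With cleanness, the $(j_{0,!},j_0^{!})$-adjunction identifies $\Ext^1_{SD^b}(\IC^q_{\bzeta,\brho},\IC^q_{\blambda,\btheta})$ with the degree-$1$ part of the costalk of $\IC^q_{\blambda,\btheta}$ along the single orbit $\BO_{\bzeta,\brho}$, which vanishes by Proposition~\ref{no stalk}. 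Projectivity in $S\Perv$ follows, and injectivity by Verdier duality exactly as you indicate.
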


\begin{proof}
  An orbit $\BO_{\bmu,\bnu}$ lies in the closure of $\BO_{\bzeta,\brho}$ iff
  $(\bmu,\bnu)\leq(\bzeta,\brho)$,
  that is $(\bzeta,\brho)-(\bmu,\bnu)$ is a nonnegative linear combination of simple roots
  $\alpha_1,\ldots,\alpha_{2N-2}$. This is proved in~\cite[Theorem 3.3.5(a,b)]{bft} for
  $\on{SO}(N-1,\bO)$-orbits in $\Gr_{\on{SO}_N}$. The proof for $\GL_N$ in place of $\on{SO}_N$ is
  absolutely similar.

  One can check that there are {\em no} relevant bisignatures $(\bmu,\bnu)\leq(\bzeta,\brho)$.
  Hence $\IC^q_{\bzeta,\brho}$ is a clean (shriek or star) extension from the orbit $\BO_{\bzeta,\brho}$.
  Hence $\Ext^1_{S\Perv_{\GL(N-1,\bO),q}(\overset{\bullet}\CalD)}(\IC^q_{\bzeta,\brho},\IC^q_{\blambda,\btheta})=
  \Ext^1_{SD^b_{\GL(N-1,\bO),q}(\overset{\bullet}\CalD)}(\IC^q_{\bzeta,\brho},\IC^q_{\blambda,\btheta})$ equals the first
  cohomology (in perverse normalization) of the costalk of $\IC^q_{\blambda,\btheta}$ at $\BO_{\bzeta,\brho}$.
  The latter costalk vanishes by~Proposition~\ref{no stalk}.

  Since $\Ext^1$ from $\IC^q_{\bzeta,\brho}$ to any irreducible object of $S\Perv_{\GL(N-1,\bO),q}(\bCD)$
  vanishes, a standard induction argument shows that $\IC^q_{\bzeta,\brho}$ is a projective object
  of $S\Perv_{\GL(N-1,\bO),q}(\bCD)$. Now applying the Verdier duality (and swapping $q$ and $q^{-1}$)
  we conclude that $\IC^q_{\bzeta,\brho}$ is an injective object of $S\Perv_{\GL(N-1,\bO),q}(\bCD)$.
\end{proof}

\begin{cor}
  \label{proj}
  \textup{(a)} For an arbitrary relevant bisignature $(\bmu,\bnu)$ the fusion product
  $\IC^q_{\bmu,\bnu}\star\IC^q_{\bzeta,\brho}$ is a projective and injective object of
  $S\Perv_{\GL(N-1,\bO),q}(\bCD)$.

  \textup{(b)} Any irreducible sheaf $\IC^q_{\blambda,\btheta}\in S\Perv_{\GL(N-1,\bO),q}(\bCD)$ is
  a quotient of an appropriate projective-injective object in \textup{(a)} above.
\end{cor}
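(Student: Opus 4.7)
Both parts follow formally from the rigid braided tensor structure on the subcategory $\CE\subset S\Perv_{\GL(N-1,\bO),q}(\bCD)$ discussed in the introduction (which contains every irreducible $\IC^q_{\bmu,\bnu}$ and in which every object is dualizable), combined with the projectivity and injectivity of $\IC^q_{\bzeta,\brho}$ established in~Corollary~\ref{bzeta}.

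For part~(a), because $\IC^q_{\bmu,\bnu}$ admits a two-sided dual $(\IC^q_{\bmu,\bnu})^\vee\in\CE$, the endofunctor $(\IC^q_{\bmu,\bnu})^\vee\star(-)$ of $S\Perv_{\GL(N-1,\bO),q}(\bCD)$ is both a left and a right adjoint to $\IC^q_{\bmu,\bnu}\star(-)$, and is in particular exact. Using the resulting rigidity adjunctions one gets, for every $\CF\in S\Perv_{\GL(N-1,\bO),q}(\bCD)$,
\begin{equation*}
  \Hom(\IC^q_{\bmu,\bnu}\star\IC^q_{\bzeta,\brho},\CF)\cong\Hom(\IC^q_{\bzeta,\brho},(\IC^q_{\bmu,\bnu})^\vee\star\CF),
\end{equation*}
\begin{equation*}
  \Hom(\CF,\IC^q_{\bmu,\bnu}\star\IC^q_{\bzeta,\brho})\cong\Hom((\IC^q_{\bmu,\bnu})^\vee\star\CF,\IC^q_{\bzeta,\brho}).
\end{equation*}
Both right-hand sides are exact in $\CF$ by~Corollary~\ref{bzeta}, which gives at once the projectivity and the injectivity of $\IC^q_{\bmu,\bnu}\star\IC^q_{\bzeta,\brho}$.

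For part~(b), given a relevant bisignature $(\blambda,\btheta)$, consider $\IC^q_{\blambda,\btheta}\star(\IC^q_{\bzeta,\brho})^\vee\in\CE$. The coevaluation $\mathbf{1}\hookrightarrow(\IC^q_{\bzeta,\brho})^\vee\star\IC^q_{\bzeta,\brho}$ supplies an embedding $\IC^q_{\blambda,\btheta}\hookrightarrow\IC^q_{\blambda,\btheta}\star(\IC^q_{\bzeta,\brho})^\vee\star\IC^q_{\bzeta,\brho}$, which forces $\IC^q_{\blambda,\btheta}\star(\IC^q_{\bzeta,\brho})^\vee\ne0$. Since every object of the Gaiotto category is supported on finitely many $\GL(N-1,\bO)$-orbits, each of which carries at most one irreducible $q$-monodromic local system, $S\Perv_{\GL(N-1,\bO),q}(\bCD)$ has finite length, so $\IC^q_{\blambda,\btheta}\star(\IC^q_{\bzeta,\brho})^\vee$ contains a simple subobject, necessarily of the form $\IC^q_{\bmu,\bnu}$ for some relevant $(\bmu,\bnu)$ by~Proposition~\ref{supports}. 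Transposing the inclusion $\IC^q_{\bmu,\bnu}\hookrightarrow\IC^q_{\blambda,\btheta}\star(\IC^q_{\bzeta,\brho})^\vee$ under the rigidity adjunction produces a nonzero morphism $\IC^q_{\bmu,\bnu}\star\IC^q_{\bzeta,\brho}\to\IC^q_{\blambda,\btheta}$, automatically surjective because the target is simple.

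There is no genuine obstacle here: the proof is purely formal once the rigidity of $\CE$ and the existence of the dual $(\IC^q_{\bzeta,\brho})^\vee$ are in hand. These, together with~Corollary~\ref{bzeta}, are the substantive inputs; they were established previously in the paper via the deformation/Weil genericity argument that transports the rigidity of $\Rep_q(\GL(N-1|N))$ from the $q=1$ case of~\cite{bfgt} through the equivalence $\CE\simeq\Rep_q(\GL(N-1|N))$.
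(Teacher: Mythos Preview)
Your proof is correct and essentially the same as the paper's: both parts use the rigidity adjunction to transfer the projectivity/injectivity of $\IC^q_{\bzeta,\brho}$ from Corollary~\ref{bzeta}, and for (b) the paper likewise picks an irreducible subobject of $(\IC^q_{\bzeta,\brho})^*\star\IC^q_{\blambda,\btheta}$ (your $\IC^q_{\blambda,\btheta}\star(\IC^q_{\bzeta,\brho})^\vee$ up to the braiding) and transposes. One small correction to your closing commentary: in the paper rigidity (Corollary~\ref{rig}) is obtained via Etingof's direct argument (Lemma~\ref{etingof}) that $F|_\CE\colon\CE\iso\on{FS}$ is an equivalence, not via a deformation from $q=1$---the deformation route is mentioned in the introduction only as an alternative not pursued.
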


\begin{proof}
  (a) By the rigidity property of $\IC^q_{\bmu,\bnu}$ proved in~Corollary~\ref{rig},
  $\Hom_{S\Perv_{\GL(N-1,\bO),q}(\overset{\bullet}\CalD)}(\IC^q_{\bmu,\bnu}\star\IC^q_{\bzeta,\brho},?)=
  \Hom_{S\Perv_{\GL(N-1,\bO),q}(\overset{\bullet}\CalD)}(\IC^q_{\bzeta,\brho},(\IC^q_{\bmu,\bnu})^*\star?)$.
  The latter functor is exact in the argument~?\ since the fusion product $\star$ is biexact, and
  $\IC^q_{\bzeta,\brho}$ is projective. The projectivity of $\IC^q_{\bmu,\bnu}\star\IC^q_{\bzeta,\brho}$
  follows, and the injectivity is proved the same way.

  (b) Again by rigidity,
  \begin{multline*}
    \Hom_{S\Perv_{\GL(N-1,\bO),q}(\overset{\bullet}\CalD)}(\IC^q_{\bmu,\bnu}
  \star\IC^q_{\bzeta,\brho},\IC^q_{\blambda,\btheta})\\
  =\Hom_{S\Perv_{\GL(N-1,\bO),q}(\overset{\bullet}\CalD)}(\IC^q_{\bmu,\bnu},
  (\IC^q_{\bzeta,\brho})^*\star\IC^q_{\blambda,\btheta}).
  \end{multline*}
  We take an irreducible subsheaf of
  $(\IC^q_{\bzeta,\brho})^*\star\IC^q_{\blambda,\btheta}$ for $\IC^q_{\bmu,\bnu}$.
  \end{proof}

\subsection{Equivalences}

\begin{thm}
  \label{derived}
  The natural functor \[D^b(S\Perv_{\GL(N-1,\bO),q}(\bCD))\to SD^b_{\GL(N-1,\bO),q}(\bCD)\] from the
  derived category of the abelian category $S\Perv_{\GL(N-1,\bO),q}(\bCD)$ to the equivariant
  derived category $SD^b_{\GL(N-1,\bO),q}(\bCD)$ is an equivalence.
\end{thm}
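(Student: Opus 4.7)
The plan is to invoke a standard equivalence criterion of Beilinson: if $\CA$ is the heart of a bounded $t$-structure on a triangulated category $\CT$ and $\CA$ admits enough projectives $P$ for which $\Hom_{\CT}(P,Y[i])=0$ for all $i>0$ and all $Y\in\CA$, then the realization functor $D^b(\CA)\to\CT$ is an equivalence. In our setting $\CA=S\Perv_{\GL(N-1,\bO),q}(\bCD)$ is the perverse heart of $\CT=SD^b_{\GL(N-1,\bO),q}(\bCD)$, and Corollary~\ref{proj}(b) furnishes enough projectives of the form $\IC^q_{\bmu,\bnu}\star\IC^q_{\bzeta,\brho}$: every irreducible is a quotient of such a projective-injective, so every finite-length object has a projective cover of this shape. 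The problem thus reduces to proving, for every pair of relevant bisignatures,
\[
  \Ext^i_{\CT}\bigl(\IC^q_{\bmu,\bnu}\star\IC^q_{\bzeta,\brho},\ \IC^q_{\blambda,\btheta}\bigr)=0
  \quad\text{for all }i>0.
\]

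First I would use rigidity of $\IC^q_{\bmu,\bnu}$ (Corollary~\ref{rig}) to transpose the fusion factor, obtaining
\[
  \Ext^i_{\CT}\bigl(\IC^q_{\bzeta,\brho},\ (\IC^q_{\bmu,\bnu})^*\star\IC^q_{\blambda,\btheta}\bigr).
\]
The fusion product on the right lives in $\CA$: the fusion functor is $t$-exact, as a consequence of the semismallness of $\bq$ in Proposition~\ref{semismall}, the smoothness of $\bp\circ\jmath$ from Proposition~\ref{smooth}, and the cleanness of extension of $\CI^\alpha$ from $\oC^\alpha$. Passing to a Jordan--H\"older filtration, the task is further reduced to verifying
\[
  \Ext^i_{\CT}\bigl(\IC^q_{\bzeta,\brho},\ \IC^q_{\blambda',\btheta'}\bigr)=0
  \quad\text{for every relevant }(\blambda',\btheta')\text{ and every }i>0.
\]

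The crucial geometric input is the cleanness of $\IC^q_{\bzeta,\brho}$ already recorded in the proof of Corollary~\ref{bzeta}: since no relevant bisignature lies strictly below $(\bzeta,\brho)$, this IC sheaf is simultaneously the $!$- and the $*$-extension of its monodromic local system from the orbit $\BO_{\bzeta,\brho}$. Cleanness turns $\RHom_{\CT}(\IC^q_{\bzeta,\brho},-)$ into the $\GL(N-1,\bO)$-equivariant $q$-monodromic costalk functor at $\BO_{\bzeta,\brho}$, shifted by the dimension. For $(\blambda',\btheta')\neq(\bzeta,\brho)$, Proposition~\ref{no stalk}, applicable because $(\bzeta,\brho)$ is both typical and relevant, makes this costalk vanish identically, so all the relevant Ext groups are zero. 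The remaining diagonal contribution $\RHom_{\CT}(\IC^q_{\bzeta,\brho},\IC^q_{\bzeta,\brho})$ computes the $\GL(N-1,\bO)$-equivariant $q$-monodromic self-cohomology of $\BO_{\bzeta,\brho}$; for transcendental $q$ the monodromy character around the $\Gm$-fibers of $\bCD$ is generic against the stabilizer action, and a standard Leray spectral-sequence argument forces this cohomology to be concentrated in degree $0$, yielding the remaining Ext-vanishing.

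The main obstacle will be the diagonal case. The off-diagonal vanishing falls out formally from Proposition~\ref{no stalk} and cleanness, but the diagonal computation requires one to identify the monodromic twist with the character by which the stabilizer of a lift to $\bCD$ acts on the $\Gm$-fiber, and to check that transcendentality of $q$ renders it generic enough to collapse the Leray spectral sequence of the fibration $\bCD|_{\BO_{\bzeta,\brho}}\to\BO_{\bzeta,\brho}$. Once this verification is in hand, Beilinson's criterion yields the equivalence asserted in Theorem~\ref{derived}.
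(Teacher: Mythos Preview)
Your strategy is the paper's: both arguments use the projective--injective generators $\IC^q_{\bmu,\bnu}\star\IC^q_{\bzeta,\brho}$ from Corollary~\ref{proj}, transpose by rigidity (Corollary~\ref{rig}), and reduce via the cleanness of $\IC^q_{\bzeta,\brho}$ to a costalk computation settled by Proposition~\ref{no stalk}. The only cosmetic difference is that you invoke Beilinson's criterion (Ext-vanishing of projectives against the heart) while the paper compares $\RHom$'s between two generators directly; these come to the same thing.

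The genuine problem is your handling of the diagonal term $\Ext^{>0}_{\CT}(\IC^q_{\bzeta,\brho},\IC^q_{\bzeta,\brho})$. Your proposed mechanism---that transcendental $q$ makes the $\Gm$-monodromy ``generic against the stabilizer action'' so that a Leray spectral sequence collapses---cannot work. Relevance of $(\bzeta,\brho)$ means precisely that the stabilizer of a point of $\BO_{\bzeta,\brho}$ in $\GL(N-1,\bO)$ acts \emph{trivially} on the fiber of $\CalD$ (this is the content of Proposition~\ref{supports}). Hence the $\Gm$-direction of $\bCD$ decouples completely from the $\GL(N-1,\bO)$-equivariance, and the monodromy $q$ is invisible to the stabilizer; no Leray argument along $\bCD|_{\BO_{\bzeta,\brho}}\to\BO_{\bzeta,\brho}$ can exploit it. What is actually needed is that the stabilizer of $L_{\bzeta,\brho}$ in $\GL(N-1,\bO)$ is pro-unipotent, so that its equivariant cohomology is $\BC$ in degree~$0$. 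For the specific choice $\bzeta=(N-1,\ldots,1)$, $\brho=(N-1,\ldots,0)$ the invariants $\rho_i+\zeta_i$ and $\rho_i+\zeta_{i-1}$ are pairwise distinct (they run through $2N-2,2N-3,\ldots,1,0$), and then the stabilizer description behind Proposition~\ref{supports} (cf.\ \cite[Lemma~2.3.3]{bft}) gives a trivial reductive quotient. The paper tacitly uses the same fact when it identifies the equivariant $\RHom$ with the degree-$0$ costalk.

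A minor point: your justification of the biexactness of fusion via Propositions~\ref{smooth} and~\ref{semismall} and ``cleanness of $\CI^\alpha$'' conflates properties of the functor $F$ with properties of $\star$. The $t$-exactness of $\star$ is a separate input (nearby cycles are perverse $t$-exact), already used in the proof of Corollary~\ref{proj}(a).
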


\begin{proof}
  We have to check that for the generating set of objects $\IC^q_{\bmu,\bnu}\star\IC^q_{\bzeta,\brho}$
  we have an isomorphism
  \begin{multline}
    \label{long iso}
    \Hom_{D^b(S\Perv_{\GL(N-1,\bO),q}(\overset{\bullet}\CalD))}(\IC^q_{\bmu,\bnu}
  \star\IC^q_{\bzeta,\brho},\IC^q_{\bmu',\bnu'}\star\IC^q_{\bzeta,\brho})\\
\iso\Hom_{SD^b_{\GL(N-1,\bO),q}(\overset{\bullet}\CalD)}(\IC^q_{\bmu,\bnu}
\star\IC^q_{\bzeta,\brho},\IC^q_{\bmu',\bnu'}\star\IC^q_{\bzeta,\brho})
  \end{multline}
  for any relevant bisignatures $(\bmu,\bnu),\ (\bmu',\bnu')$.
  The LHS of~\eqref{long iso} is Hom between projective-injective objects of the abelian category
  $S\Perv_{\GL(N-1,\bO),q}(\bCD)$, so it is concentrated in degree~$0$ and by rigidity equals
  \[\Hom_{S\Perv_{\GL(N-1,\bO),q}(\overset{\bullet}\CalD)}(\IC^q_{\bzeta,\brho},(\IC^q_{\bmu,\bnu})^*\star
  \IC^q_{\bmu',\bnu'}\star\IC^q_{\bzeta,\brho}),\] i.e.\ the multiplicity of $\IC^q_{\bzeta,\brho}$ in
  $(\IC^q_{\bmu,\bnu})^*\star\IC^q_{\bmu',\bnu'}\star\IC^q_{\bzeta,\brho}$.

  Again by rigidity, the RHS of~\eqref{long iso} equals
  \[\Hom_{SD^b_{\GL(N-1,\bO),q}(\overset{\bullet}\CalD)}(\IC^q_{\bzeta,\brho},(\IC^q_{\bmu,\bnu})^*\star
  \IC^q_{\bmu',\bnu'}\star\IC^q_{\bzeta,\brho}).\]
  Since $\IC^q_{\bzeta,\brho}$ is a clean extension from $\BO_{\bzeta,\brho}$, the latter Hom equals
  the costalk of $(\IC^q_{\bmu,\bnu})^*\star\IC^q_{\bmu',\bnu'}\star\IC^q_{\bzeta,\brho}$ at $\BO_{\bzeta,\brho}$.
  By~Proposition~\ref{no stalk}, the latter costalk is nontrivial only in degree~0 (in perverse
  normalization), and equals the multiplicity of $\IC^q_{\bzeta,\brho}$ in
  $(\IC^q_{\bmu,\bnu})^*\star\IC^q_{\bmu',\bnu'}\star\IC^q_{\bzeta,\brho}$.

  The theorem is proved.
\end{proof}

\begin{thm}
  \label{main}
  The functor $F\colon S\Perv_{\GL(N-1,\bO),q}(\bCD)\to\on{FS}$ is a braided tensor equivalence.
\end{thm}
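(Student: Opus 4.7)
The plan is to deduce this theorem from Lemma~\ref{etingof} by showing that the full subcategory $\CE\subset S\Perv_{\GL(N-1,\bO),q}(\bCD)$ appearing there actually coincides with the entire Gaiotto category. Combined with Lemma~\ref{etingof}, this would immediately yield the desired braided tensor equivalence $F\colon S\Perv_{\GL(N-1,\bO),q}(\bCD)\iso\on{FS}$.

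First, I would check that every irreducible $\IC^q_{\bmu,\bnu}$ lies in $\CE$. By Lemma~\ref{etingof} the restriction $F|_\CE$ is essentially surjective onto $\on{FS}$, so there exists $\CM\in\CE$ with $F(\CM)\cong\CF_{\bmu,\bnu}$. Since $F$ is exact and conservative on the entire category by Corollary~\ref{irreducib}, any proper subquotient of $\CM$ formed inside $S\Perv_{\GL(N-1,\bO),q}(\bCD)$ would map to a proper subquotient of the simple object $\CF_{\bmu,\bnu}$, hence would be zero; thus $\CM$ is simple in the ambient category. The bijection between isomorphism classes of irreducibles on the two sides (encoded in Proposition~\ref{braided} together with Corollary~\ref{irreducib}(b)) then forces $\CM\cong\IC^q_{\bmu,\bnu}$.

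Next, since $\CE$ is closed under fusion products by its very definition, it contains all the projective-injective sheaves $\IC^q_{\bmu,\bnu}\star\IC^q_{\bzeta,\brho}$ of Corollary~\ref{proj}(a). By Corollary~\ref{proj}(b) these objects provide a projective cover of every irreducible, so the Gaiotto category has enough projectives, all of which may be chosen inside $\CE$. For an arbitrary $\CM\in S\Perv_{\GL(N-1,\bO),q}(\bCD)$, whose support lies in some $\ol\BO_{\blambda,\btheta}$ and which is therefore of finite length, I would construct a two-term projective presentation $P_1\to P_0\twoheadrightarrow\CM$ with $P_0,P_1$ direct sums of such projective-injectives. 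Both $P_0$ and $P_1$ lie in $\CE$, and since $\CE$ is closed under cokernels we conclude $\CM\in\CE$. Hence $\CE=S\Perv_{\GL(N-1,\bO),q}(\bCD)$ and Lemma~\ref{etingof} finishes the proof.

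The main subtlety is the first step: identifying the essential preimage in $\CE$ of an irreducible $\CF_{\bmu,\bnu}$ with the expected sheaf $\IC^q_{\bmu,\bnu}$ rather than some non-trivial extension having the same class in the Grothendieck group. This rests on the fact, established earlier, that $F$ is exact and conservative on the whole Gaiotto category and not merely on $\CE$, so that $F$ detects zero objects and preserves and reflects irreducibility. Once this is in place, the remaining essential surjectivity, fullness, and closure arguments are formal consequences of the existence of enough projectives drawn from $\CE$.
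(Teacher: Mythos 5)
Your proposal is correct and follows essentially the same route as the paper: reduce via Lemma~\ref{etingof} to showing $\CE$ exhausts the Gaiotto category, note that $\CE$ contains all irreducibles and hence the projective-injective generators $\IC^q_{\bmu,\bnu}\star\IC^q_{\bzeta,\brho}$, and conclude by an enough-projectives/injectives argument (the paper realizes each object as the image of a map from a sum of projectives to a sum of injectives rather than as a cokernel of a projective presentation, an immaterial difference since $\CE$ is closed under images, kernels and cokernels). Your explicit verification that $\CE$ contains every $\IC^q_{\bmu,\bnu}$, using exactness and conservativity of $F$ together with the matching of irreducibles, is just a spelled-out version of what the paper leaves implicit in Corollary~\ref{rig}.
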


\begin{proof}
  In view of~Lemma~\ref{etingof}, it remains to prove that the embedding of the full subcategory
  $\CE\hookrightarrow S\Perv_{\GL(N-1,\bO),q}(\bCD)$ is essentially surjective. But $\CE$ contains
  all the irreducibles $\IC^q_{\bmu,\bnu}$ and is closed under fusion products, so it contains
  all the projective-injective generators $\IC^q_{\bmu,\bnu}\star\IC^q_{\bzeta,\brho}$
  of~Corollary~\ref{proj}. Every object of $S\Perv_{\GL(N-1,\bO),q}(\bCD)$ is the image of a
  morphism from a direct sum of projective objects of the above type to a direct sum of injective
  objects of the above type, hence it is contained in $\CE$.

  This finishes the proof of our main theorem modulo~Proposition~\ref{no stalk} that will be dealt
  with in the next Section.
\end{proof}

Now we can compose the derived equivalence $D^b\Rep_q(\GL(N-1|N))\iso D^b(\on{FS})$
of~Theorem~\ref{bfsl} with the quasiinverse of the derived equivalence
$D^b(S\Perv_{\GL(N-1,\bO),q}(\bCD))\iso D^b(\on{FS})$ of~Theorem~\ref{main} and the equivalence
$D^b(S\Perv_{\GL(N-1,\bO),q}(\bCD))\iso SD^b_{\GL(N-1,\bO),q}(\bCD)$ of~Theorem~\ref{derived}
to obtain

\begin{cor}
  The above composition of equivalences gives rise to a braided tensor equivalence
  \[D^b\Rep_q(\GL(N-1|N))\iso SD^b_{\GL(N-1,\bO),q}(\bCD).\]
\end{cor}

\section{The case $C=\BA^1$}
\label{slices}
The goal of this section is a proof of~Proposition~\ref{no stalk}. To this end we restrict
our considerations to a particular curve $C=\BA^1$. Since $C$ is assumed to be projective this
means that actually $C=\BP^1$, but in all the relevant moduli spaces we change the base to the
open subspace of configurations on $\BP^1$ avoiding the point $\infty\in\BP^1$. We also set the
marked point $c=0\in\BP^1$. We keep all the
notation of the previous sections, but apply it in the above sense.

\subsection{Thick affine Grassmannian}
Let $\GR_{\GL_N}$ denote the Kashiwara scheme (of infinite type, alias thick affine Grassmannian)
solving the moduli problem of the following data:

(a) A vector bundle $\CU$ of rank $N$ on $\BP^1$;

(b) A trivialization $\tau_N$ of $\CU$ in the formal neighbourhood $\wh\BP{}^1_\infty$.

\noindent We will also use the thick affine Grassmannian $\GR_{\GL_{N-1}}$ for $\GL_{N-1}$ and
$\GR_G=\GR_{\GL_{N-1}}\times\GR_{\GL_N}$ for $G=\GL_{N-1}\times\GL_N$. Namely, we will construct a
morphism $\bs\colon\CW_{\leq(\blambda,\btheta)}^{(\bmu,\bnu)}\to\GR_G$. To this end note that
since under our standing assumption $C=\BA^1$ the $B^-_{N-1}$- and $B^-_N$-structures
of~\S\ref{convo}e,f) are in general position at $\infty\in\BP^1$, both $\CV$ and $\CU$ are
trivialized at $\infty\in\BP^1$. Moreover, the $B^-_{N-1}$- and $B^-_N$-structures are in general
position in a Zariski neighbourhood of $\infty\in\BP^1$, so both $\CV$ and $\CU$ are trivialized
in this neighbourhood, and a fortiori in the formal neighbourhood $\wh\BP{}^1_\infty$. These are
the desired trivializations $\tau_{N-1},\tau_N$.

The ratio $\tau_N\circ(\tau_{N-1}\oplus1)^{-1}$
coincides with $\sigma$ of~\S\ref{convo}d) restricted from $\BP^1\setminus\{0\}$ to $\wh\BP{}^1_\infty$.
Here $\tau_{N-1}\oplus1$ denotes the induced trivialization of $\CV\oplus\CO_{\BP^1}$.

\begin{lem}
  \label{s}
  Assume that $(\bmu,\bnu)$ is a bisignature. Then the morphism $\bs$ is universally injective
  (radicial):
  $\CW_{\leq(\blambda,\btheta)}^{(\bmu,\bnu)}\hookrightarrow\GR_{\GL_{N-1}}\times\GR_{\GL_N}$.
\end{lem}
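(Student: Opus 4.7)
The plan is to reconstruct each piece of zastava data uniquely from the underlying pair of bundles with trivializations at $\infty$, and then to upgrade the resulting set-theoretic injection to a locally closed embedding of schemes. I work throughout with the functor of points.

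First I would reconstruct $\sigma$: by construction its formal germ at $\infty$ equals $\tau_N\circ(\tau_{N-1}\oplus 1)^{-1}$, and since $\sigma$ is a morphism of vector bundles on the integral affine open $\BP^1\setminus\{0\}$, it is uniquely determined by this germ (two morphisms between torsion-free sheaves on an integral scheme that agree in the formal neighborhood of a closed point must coincide, as their difference has zero completion and hence zero generic fiber). Existence of such a $\sigma$, with the prescribed pole bound $(\blambda,\btheta)\cdot 0$, is a locally closed condition on $\GR_G$. Next I would reconstruct the $B^-$-structures: because the $B^-_{N-1}$- and $B^-_N$-structures are in general position at $\infty$, each Pl\"ucker line $\CL_i\subset\Lambda^i\CV$ (resp.\ quotient $\CK_i$ of $\Lambda^i\CU$) coincides on a Zariski neighborhood of $\infty$ with the canonical constant line (resp.\ constant quotient) prescribed by the fixed Borel via $\tau_{N-1}$ (resp.\ $\tau_N$). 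This pins down the generic fiber of $\CL_i$ inside $\Lambda^i\CV$, hence determines $\CL_i$ as the unique saturated line subbundle extending it; as an abstract line bundle equipped with the map $\eta_i$, $\CL_i$ is then determined up to scalar because its degree is fixed by $(\bmu,\bnu)$. Finally the effective divisors $\Delta_i,E_i$, and hence $D$, are recovered as the degeneracy loci of $\eta_i,\xi_i$; the bisignature hypothesis ensures that these loci are effective away from $0$ and satisfy~\eqref{effective}.

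Having reconstructed all the data, I would exhibit the image in $\GR_G$ as the locus cut out by the following conditions: (i) the formal germ of $\sigma$ at $\infty$, read off from $\tau_{N-1}$ and $\tau_N$, extends globally to $\BP^1\setminus\{0\}$ with the stated pole bound; (ii) the fixed $B^-$-reductions of the trivial bundles at $\infty$ extend to global generalized $B^-_{N-1}$- and $B^-_N$-structures on $\CV$ and $\CU$ whose Pl\"ucker lines have the degrees dictated by $(\bmu,\bnu)$; (iii) the resulting divisors of degeneracy satisfy~\eqref{effective}. Each of these is a locally closed condition — extension of a section of a coherent sheaf past a prescribed Cartier divisor is locally closed, and the degree conditions are open-and-closed on connected components of $\GR_G$ — and the reconstructed data depends functorially on the input, so $\bs$ is a monomorphism of ind-schemes onto a locally closed subfunctor of $\GR_G$.

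The main obstacle will be the scheme-theoretic (as opposed to merely set-theoretic) identification: one must rule out a nilpotent thickening in the image, i.e.\ check that the reconstructions above are flat in families once the discrete invariants $(\bmu,\bnu)$ and $(\blambda,\btheta)$ are fixed. This parallels the analogous (and classical) embedding of the ordinary zastava into the thick affine Grassmannian of $G$, and should follow because saturation of line subsheaves, extension of a formal morphism to a global one, and extraction of a degeneracy divisor are all operations that preserve flatness once the relevant numerical data are held constant.
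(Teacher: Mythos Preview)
Your reconstruction argument is correct, but it takes a different route from the paper's. You recover the flag by observing that in the trivialization $\tau_{N-1}$ (resp.\ $\tau_N$) the $B^-$-reduction is the standard constant one in the formal neighbourhood of $\infty$, and then use that a saturated subbundle of a vector bundle on $\BP^1$ is determined by its formal germ at a point. The paper instead exploits the bisignature hypothesis directly: since the successive degrees of the flag are $\mu_1\geq\mu_2\geq\ldots$ (resp.\ $\nu_1\geq\ldots$), the dominance forces $\CV\cong\bigoplus\CO_{\BP^1}(\mu_i)$ and the given flag must coincide with the Harder--Narasimhan filtration of $\CV$ (refined, in the non-regular case, by its prescribed value at $\infty$). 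Thus in the paper's argument the flag is reconstructed \emph{intrinsically} from $\CV$, with the trivialization only used to break ties. Your argument is arguably more elementary and in fact does not really use the bisignature hypothesis for injectivity (your invocation of it for the divisor reconstruction is not where the hypothesis does work---the divisors $\Delta_i,E_i$ are recovered from the flag together with the trivialization at $\infty$, regardless of dominance). What the paper's Harder--Narasimhan approach buys is the identification of the isomorphism type of $\CV,\CU$ as precisely $\bmu,\bnu$; this places the image $\bs(\CW^{(\bmu,\bnu)}_{\leq(\blambda,\btheta)})$ inside the stratum $\GR_G^{\bmu,\bnu}$, which is exactly what is needed in the very next lemma (the contraction Lemma~\ref{contract}). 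So while both arguments prove the present statement, the paper's is tailored to feed directly into the proof that follows.
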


\begin{proof}
  The argument is the same as the one in the proof of~\cite[Theorem 2.8]{bf}. Let us recall it
  in the case when both $\bmu$ and $\bnu$ are regular (that is $\mu_i>\mu_{i+1}$ and $\nu_i>\nu_{i+1}$).
  The general case is similar, but requires a more cumbersome notation.
  Note that~Proposition~\ref{no stalk} is only used for $(\bmu,\bnu)=(\bzeta,\brho)$
  of~Corollary~\ref{bzeta}, and $\bzeta,\brho$ are both regular.

  With respect to the trivialization at $\infty\in\BP^1$, the complete flags
  in $\CV|_\infty$ and $\CU|_\infty$ take the values specified in the beginning of~\S\ref{Factor zas}.

  Now the dominance assumption on the degrees $\bmu,\bnu$ of the complete flags in $\CV,\CU$
  guarantees that the isomorphism type of the vector bundle $\CV$ (resp.\ $\CU$) on $\BP^1$
  is $\bmu$ (resp.\ $\bnu$), and the complete flags are nothing but the Harder-Narasimhan flags.
  (If $\bmu,\bnu$ are not necessarily regular, the Harder-Narasimhan flags are not necessarily
  complete, but they can be uniquely refined to complete flags with the value at $\infty\in\BP^1$
  prescribed by the previous paragraph.)

  Thus all the data of ~\S\ref{convo}a--f) are uniquely reconstructed from $\CV,\CU,\tau_{N-1},\tau_N$
  (recall that $\sigma=\tau_N\circ(\tau_{N-1}\oplus1)^{-1}$).
\end{proof}

\subsection{Contraction}
In case $(\bmu,\bnu)$ is a bisignature, we define the following point
$w\in\CW_{\leq(\blambda,\btheta)}^{(\bmu,\bnu)}$. We set 
\begin{gather*}
\CV=\CO_{\BP^1}(\mu_{N-1}\cdot0)e_{N-1}\oplus\ldots\oplus\CO_{\BP^1}(\mu_1\cdot0)e_1,\\
\CU=\CO_{\BP^1}(-\nu_1\cdot0)(e_1+e_N)\oplus\ldots\oplus\CO_{\BP^1}(-\nu_{N-1}\cdot0)(e_{N-1}+e_N)
\oplus\CO_{\BP^1}(-\nu_N\cdot0)e_N.
\end{gather*}
The identification $\sigma$ of $(\CV\oplus\CO_{\BP^1}e_N)|_{\BP^1\setminus\{0\}}$ and $\CU|_{\BP^1\setminus\{0\}}$
is tautological. The complete flag in $\CV$ (resp.\ $\CU$) is formed by the first line bundle,
the direct sum of the first and second line bundles, and so on. The colored divisor $D$ is supported
at $0\in\BP^1$.

The group $\BG_m$ of loop rotations acts naturally on $\CW_{\leq(\blambda,\btheta)}^{(\bmu,\bnu)}$.

\begin{lem}
  \label{contract}
  Assume that $(\bmu,\bnu)$ is a bisignature. Then the action of loop rotations contracts
  $\CW_{\leq(\blambda,\btheta)}^{(\bmu,\bnu)}$ to the point $w$.
\end{lem}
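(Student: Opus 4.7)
The plan is to combine $\BG_m$-equivariance of the factorization morphism $\bq$ with a contraction statement for the central fiber $\ol\sW{}^{(\bmu,\bnu)}_{\leq(\blambda,\btheta)}$, which I verify via the embedding $\bs$ of Lemma~\ref{s} into the thick affine Grassmannian $\GR_G$. First, the loop rotation $\BG_m$-action on $C = \BA^1$ (scaling $t \mapsto zt$) contracts the configuration space $C^{(\bmu,\bnu)}_{\leq(\blambda,\btheta)} \cong C^\alpha = \prod_j C^{(a_j)}$ to its unique $\BG_m$-fixed point $(\bmu,\bnu) \cdot 0$ as $z \to 0$; this is the obvious contraction of a product of symmetric powers of $\BA^1$ to the origin. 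Since $\bq$ is $\BG_m$-equivariant and the proper morphism $\bq\colon \ol\CW{}^{(\bmu,\bnu)}_{\leq(\blambda,\btheta)} \to C^{(\bmu,\bnu)}_{\leq(\blambda,\btheta)}$ is stratified semismall by Proposition~\ref{semismall}, every $x \in \CW^{(\bmu,\bnu)}_{\leq(\blambda,\btheta)}$ admits a limit $\lim_{z \to 0} z \cdot x$ in $\ol\CW{}^{(\bmu,\bnu)}_{\leq(\blambda,\btheta)}$ lying in the central fiber $\ol\sW{}^{(\bmu,\bnu)}_{\leq(\blambda,\btheta)}$.

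Next I reduce the contraction on the central fiber to the classical contraction of semi-infinite orbits in $\Gr_G$. By Lemma~\ref{central fib} combined with Lemma~\ref{s}, the reduced central fiber embeds $\BG_m$-equivariantly into $\ol T{}^{\bmu^*,\bnu} \cap \ol\sO_{\blambda,\btheta} \subset \Gr_G$. The point $w$ is $\BG_m$-fixed because the line bundles $\CO_{\BP^1}(k \cdot 0)$ and the splittings appearing in its definition are naturally $\BG_m$-equivariant; and from the construction of $\bs$ via trivialization at $\infty \in \BP^1$, its image $\bs(w)$ is the base point $t^{\bmu^*,\bnu}$ of the semi-infinite orbit $T^{\bmu^*,\bnu}$. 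The classical fact that loop rotation contracts $T^{\bmu^*,\bnu}$ (and, on any bounded piece such as the intersection with $\ol\sO_{\blambda,\btheta}$, also the closure $\ol T{}^{\bmu^*,\bnu}$) to $t^{\bmu^*,\bnu}$ then yields contraction of the central fiber to $w$. Combining with the first paragraph, $\lim_{z \to 0} z \cdot x = w$ for all $x \in \CW^{(\bmu,\bnu)}_{\leq(\blambda,\btheta)}$, and the limit morphism $\BA^1 \to \ol\CW$ factors through the open subscheme $\CW \subset \ol\CW$ because the latter contains both $w$ and the entire $\BG_m$-orbit of $x$.

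The main obstacle is the careful matching of conventions needed to identify $\bs(w)$ with the base point $t^{\bmu^*,\bnu}$. This amounts to tracking how the Harder--Narasimhan trivializations of $\CV$ and $\CU$ at $\infty \in \BP^1$, dictated by the $B^-$-structures of~\S\ref{Factor zas}, interact with the explicit line-bundle decomposition defining $w$, and verifying that the resulting lattice pair coincides with $(L'_{\bmu^*}, L_\bnu)$ up to a re-labeling of basis vectors. A secondary technical point is to verify that loop rotation $t \mapsto zt$ acts as an attracting (rather than repelling) one-parameter subgroup with respect to $t^{\bmu^*,\bnu}$ and the orbit $T^{\bmu^*,\bnu} = U^-(\bF) \cdot t^{\bmu^*,\bnu}$ --- this is a standard sign convention determined by the choice of $B^-$, but one that must be tracked with care.
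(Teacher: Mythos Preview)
Your two--step strategy (first use properness of $\bq$ to land the limit in the central fiber, then contract the central fiber) is reasonable in outline, but the second step rests on a false claim. Loop rotation does \emph{not} contract a semi-infinite orbit $T^{\bmu^*,\bnu}=U^-(\bF)\cdot t^{\bmu^*,\bnu}$ to its base point. What contracts semi-infinite orbits is a dominant cocharacter of the maximal torus; loop rotation instead contracts $G(\bO)$-orbits $\Gr^\lambda$ to $t^\lambda$. Already for $\SL_2$ one sees this: the point $\left(\begin{smallmatrix}1&0\\t^{-1}&1\end{smallmatrix}\right)\cdot t^0\in T^0$ flows under $t\mapsto zt$ as $z\to0$ to $t^{\alpha^\vee}$, not to $t^0$. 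Consequently the closure $\ol T{}^{\bmu^*,\bnu}\cap\ol\sO_{\blambda,\btheta}$ typically contains several loop-rotation fixed points $t^{\bmu'^*,\bnu'}$ (those with $(\bmu',\bnu')\underset{G}{\geq}(\bmu,\bnu)$ and $(\bmu',\bnu')\leq(\blambda,\btheta)$, two different partial orders), so your argument does not single out $w$. A minor side remark: in your first paragraph, semismallness is irrelevant to the existence of limits; only properness of $\bq$ is used there.

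The paper's argument avoids the central fiber altogether and works directly with the embedding $\bs$ into the \emph{thick} Grassmannian $\GR_G$. The key input, extracted from the proof of Lemma~\ref{s}, is that the bisignature hypothesis forces the isomorphism types of $\CV,\CU$ to be exactly $\bmu,\bnu$ (via Harder--Narasimhan), so the entire image $\bs(\CW^{(\bmu,\bnu)}_{\leq(\blambda,\btheta)})$ lies in a single stratum $\GR_G^{\bmu,\bnu}$ of the stratification of $\GR_G$ by bundle type. Each such stratum is contracted by loop rotation to a partial flag variety of $G$ (recording the value of the HN flag at $\infty$), and again by the proof of Lemma~\ref{s} the image sits in the fiber over the single point $(B^-_{N-1},B^-_N)$; that fiber's fixed point is $\bs(w)$. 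This is where the bisignature hypothesis actually does work, and it is precisely the mechanism your argument is missing.
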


\begin{proof}
  Recall that $\GR_G=\bigsqcup_{(\bxi,\bta)\in X^+}\GR_G^{\bxi,\bta}$ is stratified according to the isomorphism
  types of $G$-bundles on $\BP^1$. Each stratum is contracted by the loop rotations to a finite
  dimensional subscheme isomorphic to a partial flag variety of $G$. The isomorphism takes the value
  of the Harder-Narasimhan flag at $\infty\in\BP^1$.

  From the proof of~Lemma~\ref{s}, the image $\bs(\CW_{\leq(\blambda,\btheta)}^{(\bmu,\bnu)})\subset\GR_G$
  lies in the stratum $\GR_G^{\bmu,\bnu}$. More precisely, it lies in the fiber of the contraction
  morphism over the point $(B_{N-1}^-,B_N^-)$ in the flag variety of $G$ (or rather over its image
  in the relevant partial flag variety). But $\bs$ is clearly equivariant with respect to the loop
  rotations, and $\bs(w)$ is the fixed point of the contraction morphism.
\end{proof}

\begin{rem}
  Thus $\CW_{\leq(\blambda,\btheta)}^{(\bmu,\bnu)}$ plays the role of a transversal slice to the orbit
  $\sO_{\bmu,\bnu}\subset\ol\sO_{\blambda,\btheta}\subset\Gr_G\subset\GR_G$ in the case when $(\bmu,\bnu)$
  is a bisignature. Note that the definition of $\CW_{\leq(\blambda,\btheta)}^{(\bmu,\bnu)}$ strongly resembles
  the symmetric definition of (generalized) transversal slices in~\cite[\S2(v)]{bfn}.
\end{rem}

\subsection{Proof of Proposition~\ref{no stalk}}
\label{proof no stalk}
The configuration space $\BA^{(\bmu,\bnu)}_{\leq(\blambda,\btheta)}$ is just an affine space, so
the line bundle $\CP$ can be trivialized. We fix a trivialization (note that it is unique up to a
scalar multiplication), that is the corresponding nowhere vanishing section. In the argument below,
we will consider the restrictions of factorizable sheaves to this section, but we will keep the old
notation in order not to make it more cumbersome.

According to~Lemma~\ref{Two line}, the line bundle $\jmath^*\bp^*\CalD$ on
$\CW_{\leq(\blambda,\btheta)}^{(\bmu,\bnu)}$ trivializes as well (i.e.\ acquires a nowhere vanishing section),
and we will consider the restriction of $\jmath^*\bp^\circ\IC^q_{\blambda,\btheta}$ to this section, but
again we will keep notation $\jmath^*\bp^\circ\IC^q_{\blambda,\btheta}$ for this restriction below.
In particular, we have to compute the costalk of $\jmath^*\bp^\circ\IC^q_{\blambda,\btheta}$ at
(the section over) the point $w$.

By the contraction principle and~Lemma~\ref{contract}, the above costalk equals
$H^\bullet_c(\CW_{\leq(\blambda,\btheta)}^{(\bmu,\bnu)},\jmath^*\bp^\circ\IC^q_{\blambda,\btheta})$.
By the cleanness property~Lemma~\ref{clean} and the irreducibility result of~Lemma~\ref{irreduc},
the latter cohomology equals
$H^\bullet_c(\BA^{(\bmu,\bnu)}_{\leq(\blambda,\btheta)},\CF_{\blambda,\btheta}^{(\bmu,\bnu)})$.
The configuration space $\BA^{(\bmu,\bnu)}_{\leq(\blambda,\btheta)}$ is just an affine space contracted to
the origin~$0$ by the loop rotations. Invoking the contraction principle once again, we conclude
that the latter cohomology equals the costalk of $\CF_{\blambda,\btheta}^{(\bmu,\bnu)}$ at
$0\in\BA^{(\bmu,\bnu)}_{\leq(\blambda,\btheta)}$. Under the equivalence of~Theorem~\ref{bfsl}, the latter
costalk is isomorphic to $\Ext^\bullet(M_{\bmu,\bnu},V_{\blambda,\btheta})$. Here $M_{\bmu,\bnu}$ stands
for the Verma module with highest weight $(\bmu,\bnu)$ over $U_q(\fgl(N-1|N))$, and the Ext is
taken in the category $\CO$. Finally, the latter Ext vanishes since $M_{\bmu,\bnu}$ and
$V_{\blambda,\btheta}$ lie in the different linkage classes of the category $\CO$ (they are separated
by the eigenvalues of the center of $U_q(\fgl(N-1|N))$).

The proposition is proved. \hfill $\Box$

\end{document}